 \theoremstyle{plain}
    \newtheorem{theorem}{Theorem}[section]
    \newtheorem{corollary}[theorem]{Corollary}
    \newtheorem{lemma}[theorem]{Lemma}
    \newtheorem{proposition}[theorem]{Proposition}
    \newtheorem{assumption}[theorem]{Assumption}
 \theoremstyle{definition}
    \newtheorem{definition}[theorem]{Definition}
 \theoremstyle{remark}
    \newtheorem{example}[theorem]{Example}
\newcommand{\D}{\displaystyle}
\definecolor{miverde}{RGB}{128,255,0}
\definecolor{minaranja}{RGB}{255,128,1}
\DeclareMathOperator{\supp}{supp}
\DeclareMathOperator{\Span}{span}
\DeclareMathOperator{\diam}{diam}
\DeclareMathOperator{\Div}{div}
\newcommand\NN{\mathbb N}
\newcommand\AAA{\mathcal A}
\newcommand\BB{\mathcal B}
\newcommand\BBB{B}
\newcommand\MM{\mathcal M}
\newcommand\QQ{\mathcal Q}
\newcommand\HH{\mathcal H}
\newcommand\VV{\mathcal S}
\newcommand\EE{\mathcal E}
\newcommand\RRR{\mathcal R}
\newcommand{\Res}{\mathbf R}
\newcommand\CC{\mathcal C}
\newcommand\OO{\mathcal O}
\newcommand\NNN{\mathcal N}
\newcommand{\RR}{\mathbb R}
\newcommand{\II}{\mathcal I}
\newcommand{\norm}[1]{{\left\vert\kern-0.25ex\left\vert\kern-0.25ex\left\vert #1 
    \right\vert\kern-0.25ex\right\vert\kern-0.25ex\right\vert}}
\begin{document}

\title{\bf A posteriori error estimators\\ for hierarchical B-spline discretizations}
\author[1,2]{Annalisa Buffa} 
\author[3]{Eduardo M. Garau\thanks{Corresponding author. Email: egarau@santafe-conicet.gov.ar}}
\affil[1]{\small \'Ecole Polytechnique F\'ed\'erale de Lausanne, School of Basic Sciences, MATHICSE-MNS, Lausanne, Switzerland.}
\affil[2]{\small Istituto di Matematica Applicata e Tecnologie Informatiche 
`E. Magenes' (CNR), Pavia, Italy.}
\affil[3]{\small Universidad Nacional del Litoral, Consejo Nacional de Investigaciones Cient\'ificas y T\'ecnicas, FIQ, Santa Fe, Argentina.}

\maketitle

\begin{abstract} In this article we develop function-based a posteriori error estimators for the solution of linear second order elliptic problems considering hierarchical spline spaces for the Galerkin discretization.
  We prove a global upper bound for the energy error. The theory hinges on some weighted Poincar\'e type inequalities, where the B-spline basis functions are the weights appearing in the norms. Such inequalities are derived following the lines in~\cite{Veeser-Verfurth}, where the case of standard finite elements is considered. Additionally, we present numerical experiments that show the efficiency of the error estimators independently of the degree of the splines used for discretization, together with an adaptive algorithm guided by these local estimators that yields optimal meshes and rates of convergence, exhibiting an excellent performance. 
\end{abstract}

\begin{quote}\small
\textbf{Keywords:} a posteriori error estimators, adaptivity, hierarchical splines
\end{quote}

\section{Introduction}

The design of reliable and efficient a posteriori error indicators for guiding local refinement when solving numerically partial differential equations is essential, both for defining a robust and automatic adaptive procedure and for ensuring to find suitable approximations of the desired solution without exceeding the limits of available softwares.

The main idea behind a posteriori error estimation is to build a properly locally refined mesh in order to equidistribute the approximation error. Whereas for standard finite element methods several intuitive ways of refining locally a mesh are clear and broadly analysed, for isogeometric methods~\cite{Hughes_Cottrell_Bazilevs,IGA-book}, the development
of efficient and robust strategies to get suitably locally refined meshes constitutes a challenging 
problem because the tensor product structure of B-splines~\cite{DeBoor,Schumi} is broken. Different alternatives have been proposed in order to tackle this situation, such as hierarchical splines, T-splines, LR-splines or PHT-splines. Among them, hierarchical splines based on the construction presented in~\cite{Vuong_giannelli_juttler_simeon} (see also the previous work~\cite{Kraft}) are probably the easiest to define and to implement for their use in the context of isogeometric methods.

Regarding a posteriori error estimation using hierachical spline spaces for discretizations in isogeometric methods, up to this moment we can only mention the results from~\cite{BGi15}, where residual based error indicators has been proposed. In that case, the authors considered an approach following the standard techniques in classical finite elements for deriving element-based a posteriori error estimators using the truncated basis for hierachical splines introduced in~\cite{Giannelli2012485,GJS14}. The presented proof for the reliability of such estimators needs to assume some restrictions over the hierachical meshes, with the purpose of controlling the overlap of the truncated basis function supports.

We notice that although truncation is indeed a possible strategy to recover partition of unity, this procedure requires a specific construction that entails complicated basis function supports, that may be non convex and/or not connected, and their use may produce a non negligible overhead with an adaptive strategy. Thus, in this article, we consider the hierachical basis without truncation recovering the simplicity of basis function supports, that in this case are boxes. Moreover, taking into account the hierachical space defined in~\cite{BG15}, we can also recover the partition of unity.  

The main goal of this article is to obtain simple residual type a posteriori error estimators for linear second order elliptic problems using discretizations in hierarchical spline spaces. As pointed out in~\cite{BG15}, where the hierarchical basis can be obtained simply through parent-children relations, the design of estimators associated to basis functions (instead of elements) seems to be more suitable or natural for guiding adaptive refinements. We derive reliable function-based a posteriori error indicators without any restrictions over the hierachical mesh configurations, i.e., we are able to bound the error in energy norm by our global a posteriori error indicator. In order to prove such upper bound for the error, it is key the use of some Poincar\'e type inequalities where the B-splines are weight functions and also that their supports are, obviously, convex sets. In this point it is important to remark that a posteriori error estimations in the context of classical finite element methods have been widely studied in the available literature, see for example,~\cite{Babuska-Rheinboldt-78} and~\cite{MNS-stars}. On a hand, our approach can be considered as a generalization to high order splines of some existent Poincar\'e type inequalities; and on the other hand, in order to do that, we follow closely the lines from~\cite{Veeser-Verfurth}, where specific Poincar\'e type inequalities are proved and the classical barycentric coordinate functions appear as weight functions.

This article is organized as follows. In Section~\ref{S:setting} we briefly introduce the variational formulation of the elliptic problem that we consider, and in Section~\ref{S:discretization} we describe precisely the hierachical spline spaces that we use for its Galerkin discretization. Next, we state and prove some Poincar\'e type inequalities that have B-splines as weight functions in Section~\ref{S:Poincare}, which are used in Section~\ref{S:estimators} to derive function-based a posteriori error estimators and to prove that such estimators constitute an upper bound for the energy error. In Section~\ref{S:refinement} we analyse a reduction property of our estimators after refinement of the hierachical mesh. Finally, in Section~\ref{S:aigm} we propose an adaptive algorithm guided by our estimators and illustrate its behaviour through several numerical tests, showing that the global estimator is efficient and the algorithm experimentally converges with the optimal rate.

\section{Problem setting}\label{S:setting}

For simplicity, we consider the following linear elliptic problem on the parametric domain $\Omega= [0,1]^d\subset \RR^d$, $d=2,3,\dots$,
\begin{equation}\label{E:strong form}
\left\{
\begin{aligned}
-\Div \big(\AAA\nabla u\big) + \bm{b} \cdot \nabla u + cu&= f\qquad &
& \text{in }\Omega\\
u&= 0\qquad & &\text{on }\partial \Omega
  \end{aligned}
\right.
\end{equation}
where $\AAA \in W^{1,\infty}(\Omega;\RR^{d\times d})$ is uniformly symmetric positive
definite over $\Omega$, i.e., 
there exist constants $0 <\gamma_1 \leq \gamma_2$ such that 
\begin{equation}\label{E:A uniformly spd}
\gamma_1 |\xi|^2 \leq \xi^{T}\AAA(x)\xi \leq \gamma_2 |\xi|^2,  \quad \forall x \in
\Omega,\  \xi \in \RR^{d}, 
\end{equation}
$\bm{b} \in W^{1,\infty}(\Omega;\RR^{d})$, $c \in  L^{\infty}(\Omega)$.
We assume that $c -\frac12 \Div(\bm{b}) \geq 0$. 

We say that $u\in H^1_0(\Omega):=W^{1,2}_0(\Omega)$ is a weak solution of~\eqref{E:strong form} if 
\begin{equation}\label{E:weak form}
\BBB[u,v]=F(v),\quad\forall\,v\in H^1_0(\Omega),
\end{equation}
where $\BB:H^1_0(\Omega)\times H^1_0(\Omega)\to \RR$ is the bounded bilinear
form given by
\begin{equation*}\label{E:bilinear form}
\BBB[u,v]:=\int_\Omega \AAA\nabla u\cdot \nabla v + \bm{b} \cdot \nabla u \,v + c\,u\,v,
\end{equation*}
and $F:H^1_0(\Omega)\to \RR$ is the lineal functional defined by
$$
 F(v):= \int_\Omega fv.
$$

Taking into account~\eqref{E:A uniformly spd} and that $c -\frac12 \Div(\bm{b}) \geq 0$ , it is easy to check that $B$ is coercive, that is,
\begin{equation}\label{E:coercivity}
 \gamma_1\|\nabla v\|_{L^2(\Omega)}^2\le \BBB[v,v],\qquad\forall\,v\in H^1_0(\Omega).
\end{equation}
Thus, as a consequence of the Lax-Milgram theorem, we have that problem~\eqref{E:weak form} is well posed.

\section{Discretization using hierarchical spline spaces}\label{S:discretization}

In this section we revise briefly the definitions of univariate and tensor product B-splines that we use to build a basis for a hierachical spline space like those from~\cite{Kraft,Vuong_giannelli_juttler_simeon}. Then, we state the discrete formulation of problem~\eqref{E:weak form} when considering such spaces.  

\paragraph{Univariate B-spline bases}
Let $\Xi_{p,n}:=\{\xi_j\}_{j=1}^{n+p+1}$ be a $p$-open knot vector, i.e., a \emph{sequence} such that
\begin{equation*}\label{E:open knot vector}
 0=\xi_1=\dots=\xi_{p+1}<\xi_{p+2}\le\dots\le\xi_{n}<\xi_{n+1}=\dots=\xi_{n+p+1}=1,
\end{equation*}
 where the two positive integers $p$ and $n$ denote a given polynomial degree, and the corresponding number of B-splines defined over the subdivision $\Xi_{p,n}$, respectively. Here, $n\ge p+1$.
We also introduce the \emph{set} $Z_{p,n}:=\{\zeta_j\}_{j=1}^{\tilde n}$ of breakpoints (i.e., knots without repetitions), and denote by $m_j$ the multiplicity of the breakpoint $\zeta_j$, such that
$$\Xi_{p,n}=\{\underbrace{\zeta_1,\dots,\zeta_1,}_{m_1 \text{ times}}\underbrace{\zeta_2,\dots,\zeta_2,}_{m_2 \text{ times}}\dots \underbrace{\zeta_{\tilde n},\dots,\zeta_{\tilde n}}_{m_{\tilde n} \text{ times}}\},$$
with $\D\sum_{i=1}^{\tilde n} m_i = n+p+1$. Note that the two extreme knots are repeated $p+1$ times, i.e., $m_1=m_{\tilde n}=p+1$. We assume that an internal knot can be repeated at most $p+1$ times, that is, $m_j\le p+1$, for $j=2,\dots,{\tilde n}-1$. 

Let $\BB(\Xi_{p,n}) := \{b_1,b_2,\dots,b_n\}$ be the B-spline basis~\cite{DeBoor,Schumi} associated to the knot vector $\Xi_{p,n}$. In particular, we remark that the \emph{local knot vector} of $b_j$ is given by $\{\xi_{j},\dots,\xi_{j+p+1}\}$,
which is a subsequence of $p+2$ consecutive knots of $\Xi_{p,n}$; and that the support of $b_j$, denoted by $\supp b_j$, is the closed interval $[\xi_{j},\xi_{j+p+1}]$. Additionally, the B-spline basis $\BB(\Xi_{p,n})$ is in fact a basis for the space $\VV_{p,n}$ of the piecewise polynomials of degree $p$ over the mesh $\II(\Xi_{p,n}):=\{[\zeta_j,\zeta_{j+1}]\,|\,j=1,\dots,{\tilde n}-1\}$, that have $r_j:=p-m_j$ continuous derivatives at the breakpoint $\zeta_j$, for $j= 1,\dots,{\tilde n}$. If $r_j = -1$ for some $j$, the splines in $\VV_{p,n}$ can be discontinuous at $\zeta_j$.

\paragraph{Tensor product B-spline bases}
Let $d\ge 1$. In order to define a tensor product $d$-variate spline function space on the parametric domain $\Omega:=[0,1]^d\subset \RR^d$, we consider ${\bf p}:=(p_1,p_2,\dots,p_d)$ the vector of polynomial degrees with respect to each coordinate direction and ${\bf n}:=(n_1,n_2,\dots,n_d)$, where $n_i\ge p_i+1$. For $i=1,2,\dots,d$, let $\Xi_{p_i,n_i}:=\{\xi_j^{(i)}\}_{j=1}^{n_i+p_i+1}$ be a $p_i$-open knot vector, i.e.,
\begin{equation*}
 0=\xi_1^{(i)}=\dots=\xi_{p_i+1}^{(i)}<\xi_{p_i+2}^{(i)}\le\dots\le\xi_{n_i}^{(i)}<\xi_{n_i+1}^{(i)}=\dots=\xi_{n_i+p_i+1}^{(i)}=1,
\end{equation*}
where the two extreme knots are repeated $p_i+1$ times and any internal knot can be repeated at most $p_i+1$ times. We denote by $\VV_{\bf p,n}$ the tensor product spline space spanned by the B-spline basis $\BB_{\bf p,n}$  defined as the tensor product of the univariate B-spline bases $\BB(\Xi_{p_1,n_1}),\ldots,\BB(\Xi_{p_d,n_d})$. More precisely, $\beta\in\BB_{\bf p,n}$ if and only if 
\begin{equation}\label{E:tensor product beta}
 \beta(x)=\beta_1(x_1)\dots\beta_j(x_j)\dots\beta_d(x_d),
\end{equation}
where $\beta_j\in \BB(\Xi_{p_j,n_j})$, for $j=1,2,\dots,d$, and $x_j$ denotes the $j$-th 
component of $x\in\RR^d$. We notice that the support of $\beta$, denoted by $\omega_\beta$, is a box in $\RR^d$ given by
\begin{equation}\label{E:omega beta}
 \omega_\beta:=\supp \beta = \supp\beta_1\times\dots\times \supp\beta_j\times\dots\times \supp\beta_d.
\end{equation}
Finally, the associated Cartesian grid $\QQ_{\bf p,n}$ consists of the cells $Q= I_1\times\dots\times I_d$, where $I_i$ is an element (closed interval) of the $i$-th univariate mesh $\II(\Xi_{p_i,n_i})$, for $i=1,\dots,d$. 
 
\paragraph{Sequence of tensor product spline spaces}
 
In order to define a hierachical structure, we assume that there exists an underlying sequence of tensor product $d$-variate spline spaces $\{\VV_\ell\}_{\ell\in\NN_0}$, where $\VV_\ell$ is called \emph{the space of level $\ell$}, such that
\begin{equation}\label{E:tensor product spaces}
 \VV_0\subset \VV_1\subset \VV_2\subset\VV_3\subset\dots.
\end{equation}
Each of these spaces are indeed obtained from a tensorization of univariate spline spaces as we explain now.

Let ${\bf p}:=(p_1,p_2,\dots,p_d)$ denote the chosen vector of polynomial degrees for the univariate splines in each coordinate direction. For $\ell\in\NN_0$, $\VV_\ell:=\VV_{{\bf p},{\bf n}_\ell}$ is the tensor product spline space and $\BB_\ell:= \BB_{{\bf p},{\bf n}_\ell}$ is the corresponding B-spline basis, that we call the set of \emph{B-splines of level $\ell$}, for some ${\bf n}_\ell=(n_1^\ell,n_2^\ell,\dots,n_d^\ell)$. In order to guarantee~\eqref{E:tensor product spaces}, we assume that if $\xi$ is a knot in $\Xi_{p_i,n_i^\ell}$ with multiplicity $m$, then $\xi$ is also a knot in $\Xi_{p_i,n_i^{\ell+1}}$ with multiplicity at least $m$, for $i=1,\dots,d$ and $\ell\in\NN_0$. Furthermore, we denote by $\QQ_\ell:=\QQ_{{\bf p},{\bf n}_\ell}$ the corresponding Cartesian mesh, and we say that $Q\in\QQ_\ell$ is a \emph{cell of level $\ell$}. We note that we assume that the cells are closed sets.

B-splines possess several important properties, such as non-negativity, partition of unity, local linear independence and local support, that make them suitable for design and analysis, see \cite{IGA-book,DeBoor, Schumi} for details. Moreover, we have that B-splines of level $\ell$ can be written as linear combinations of B-splines of level $\ell+1$ with non-negative coefficients, which is known as \emph{two-scale relation}. More specifically, if $\BB_\ell =\{\beta_{i,\ell}\mid i=1,\dots,N_\ell\}$, where $N_\ell$ is the dimension of the space $\VV_\ell$, for $\ell\in\NN_0$; this property can be stated as follows:
\begin{equation} \label{E:two scale relation}
 \beta_{i,\ell} = \sum_{k=1}^{N_{\ell+1}}
c_{k,\ell+1}(\beta_{i,\ell}) \, \beta_{k,\ell+1}, 
\qquad \forall\,\beta_{i,\ell} \in\BB_\ell,
\end{equation}
with $c_{k,\ell+1}(\beta_{i,\ell}) \ge 0$. We notice that, due to the local linear independence of B-splines, only a limited number of the coefficients $c_{k,\ell+1}(\beta_{i,\ell})$ are different from zero. Taking into account~\eqref{E:two scale relation} we define the set of children of $\beta_{i,\ell}$, denoted by $\CC(\beta_{i,\ell})$, consisting of the functions $\beta_{k,\ell+1}$ such that $c_{k,\ell+1}(\beta_{i,\ell}) \ne 0$, that is,
$$
\CC(\beta_{i,\ell}):=\{ \beta_{k,\ell+1}\in\BB_{\ell+1}\,\mid\,c_{k,\ell+1}(\beta_{i,\ell}) \ne 0\}.
$$
As we will see later on, in cases of interest such as subsequent levels obtained by dyadic refinement, the number of children is bounded and the bound depends solely on the degree ${\bf p}$.

\paragraph{Hierarchical spline space}

In \cite{BG15} the authors considered a particular subspace of the hierarchical space presented in~\cite{Kraft,Vuong_giannelli_juttler_simeon}, which still enjoys of good local approximation properties and leads to simple refinement schemes, because it is defined in a way that focus on the relation between functions. In order to define precisely a basis for the hierachical space that we consider, we first need to fix a hierarchy of subdomains of $\Omega=[0,1]^d$ as in the next definition, which in turn provides the different levels in the multilevel structure.

\begin{definition} \label{def:hierarchy of subdomains}[Hierarchy of subdomains]
 Let $n \in \NN$ be arbitrary. We say that the set ${\bf\Omega}_n := \{\Omega_0,\Omega_1,\dots,\Omega_n\}$ is a hierarchy of subdomains of $\Omega$ of depth $n$ if
\begin{equation*}
 \Omega = \Omega_0 \supset \Omega_1 \supset \dots \supset 
\Omega_{n-1}\supset \Omega_n = \emptyset,
\end{equation*}
and each subdomain $\Omega_\ell$ is the union of cells of level $\ell-1$, for $\ell=1,\dots,n-1$.
\end{definition}

We now are in position of introducing the basis for the hierachical space.

\begin{definition}\label{D:hierachical basis}[Hierarchical basis]
Let $\{\VV_\ell\}_{\ell\in\NN_0}$ be a sequence of spaces like~\eqref{E:tensor product spaces} with the corresponding B-spline bases $\{\BB_\ell\}_{\ell\in\NN_0}$, and ${\bf\Omega}_n := \{\Omega_0,\Omega_1,\dots,\Omega_n\}$ a hierarchy of subdomains of depth $n$. We define the \emph{hierarchical basis} $\HH :=  \HH_{n-1}$ computed with the following recursive algorithm:
\begin{equation*}
\left \{
\begin{array}{l}
 \HH_0 := \BB_0, \\
\displaystyle  \HH_{\ell+1} := \{\beta \in  \HH_\ell \mid \supp \beta \not \subset \Omega_{\ell+1} \} \cup \bigcup_{\substack{\beta \in  \HH_\ell \\ \supp \beta \subset \Omega_{\ell+1}}} \CC(\beta),\quad \ell=0,\dots,n-2.
\end{array}
\right.
\end{equation*} 
\end{definition}

An interesting property of the hierachical basis $\HH$ is that the coefficients for writing the unity are strictly positive. That is, we have 
 \begin{equation}\label{E:partition of the unity in the hierarchical space}
\sum_{\beta\in { \HH}} a_\beta \beta (x) = 1, \qquad\text{for } x \in  \Omega,
\end{equation}
with $a_\beta > 0$ (see~\cite[Theorem 5.2]{BG15}). 

In the following we say that $\beta$ is an \emph{active function} if $\beta\in\HH$, it is an \emph{active function of level $\ell$} if $\beta \in\HH \cap \BB_\ell$, and it is a \emph{deactivated function of level $\ell$} if $\beta \in\HH_\ell \setminus \HH_{\ell+1}$. Moreover, $\HH_\ell \cap \BB_\ell$ is the union of active and deactivated functions of level $\ell$.

We remark that, unlike in the definition given in~\cite{Vuong_giannelli_juttler_simeon} where a B-spline of level $\ell+1$ is added to $\HH_{\ell+1}$ if its support is contained in $\Omega_{\ell+1}$, in Definition~\ref{D:hierachical basis} B-splines of level $\ell+1$ are added only if they are children of a deactivated function of level $\ell$.

The hierarchical spline basis $\HH$ is associated to an underlying \emph{hierarchical mesh} $\QQ \equiv \QQ({\bf\Omega}_n)$, given by
\begin{equation*}
\QQ:= \bigcup_{\ell = 0}^{n-1} \{ Q\in\QQ_\ell\,\mid\, Q\subset \Omega_\ell 
\,\wedge\, Q\not\subset \Omega_{\ell+1}\}.
\end{equation*}
Analogously, we say that $Q$ is an \emph{active cell} if $Q \in \QQ$, and it is an \emph{active cell of level $\ell$} if $Q \in \QQ_\ell \cap \QQ$. We will also say that $Q$ is a \emph{deactivated cell of level $\ell$} if $Q \in \QQ_\ell$ and $Q \subset \Omega_{\ell+1}$. 

Finally, we notice that a B-spline of level $\ell$ is active if all the active cells within its support are of level $\ell$ or higher, and at least one of such cells is actually of level $\ell$. A B-spline is deactivated when all the cells of its level within the support are deactivated. 

\paragraph{Discretization of the variational problem}

In order to discretize problem~\eqref{E:weak form} we consider a hierarchy of subdomains ${\bf \Omega}_n$ of $\Omega$ and the corresponding spline space $\VV(\QQ):=\Span\HH$ with the hierarchical basis $\HH$ and the mesh $\QQ$ as defined above. Now, we define the discrete space $\VV_0\equiv\VV_0(\QQ)$ by $$\VV_0 := \{V\in\VV(\QQ)\,|\,V_{|_{\partial \Omega}}\equiv 0\}.$$
Thus, the discrete counterpart of~\eqref{E:weak form} consists in finding $U\in \VV_0$ such that
\begin{equation}\label{E:disc prob}
B[U,V]=F(V),\quad\forall\,V\in\VV_0.
\end{equation}

\section{Weighted Poincar\'e type inequalities}\label{S:Poincare}

In this section we briefly revise some basic notions about weighted Sobolev spaces and then we state weighted Poincar\'e type inequalities (see Theorems~\ref{T:Poincare inequality} and~\ref{T:Friedrichs} below) that will be needed for proving the reliability of the function-based a posteriori error estimators to be presented in the next section.

\subsection{Some definitions about weighted Sobolev spaces}

Let $A\subset\RR^d$ be a bounded domain with
Lipschitz boundary. If $\rho$ is a nonnegative locally integrable function, we denote by
$L^2(A, \rho)$ the space of measurable functions $u$ such that
\begin{equation*}
\| u \|_{L^2(A,\rho)} 
:= \left(\int_{A}|u(x)|^2 \rho(x) dx\right)^\frac12 < \infty.
\end{equation*}
Notice that $L^2(A, \rho)$ is a Hilbert space
equipped with the scalar product
\begin{equation*}
\langle u,v\rangle_{A,\rho} := \int_{A}u(x)v(x) \rho(x) dx.
\end{equation*}

We also define the weighted Sobolev space $H^1(A,\rho)$ of weakly differentiable functions $u$ such that 
$\| u \|_{H^1(A,\rho)} < \infty$, where
\begin{equation*}
\|u\|_{H^1(A,\rho)}^2:=\|u\|_{L^2(A,\rho)}^2+\|\nabla u\|_{L^2(A,\rho)}^2. 
\end{equation*}
Finally, $H^1_0(A,\rho)$ is the closure of $C^\infty_0(A)$ in $H^1(A,\rho)$.

\subsection{A weighted Poincar\'e inequality}

Before stating the main result of this section, we recall the definition of concave functions.

\begin{definition}[Concave function]\label{D:concave function}
A function $f$ defined on a convex set $A\subset\RR^d$ is \emph{concave on $A$} if for any $\alpha$, $0<\alpha<1$, there holds
$$f(\alpha x+(1-\alpha)y)\ge \alpha f(x)+(1-\alpha)f(y),\qquad\forall\,x,y\in A.$$
\end{definition}

The weighted Poincar\'e inequality stated in~\cite{Chua-Wheeden-06} holds for a weight $\rho$ such that $\rho^s$ is a concave function on its support, for some $s>0$. Thus, in view of Theorem~\ref{T:B-spline concave} below, which states that this is the case when $\rho$ is a multivariate B-spline basis function, the following result is an immediate consequence of~\cite[Thm. 
1.1 and Thm. 1.2]{Chua-Wheeden-06} (see also~\cite[Lemma 
5.2]{Veeser-Verfurth}).

\begin{theorem}[Weighted Poincar\'e inequality]\label{T:Poincare inequality}
If $\beta$ is a tensor product B-spline basis function, then 
\begin{equation*}
 \|v-c_\beta\|_{L^2(\omega_\beta,\beta)}\le \frac{1}{\pi}\diam(\omega_\beta)\|\nabla 
v\|_{L^2(\omega_\beta,\beta)},\qquad\forall\,v\in H^1(\omega_\beta,\beta),
\end{equation*}
where $c_\beta:= \frac{\int_{\omega_\beta}v\,\beta}{\int_{\omega_\beta}\beta}$.
\end{theorem}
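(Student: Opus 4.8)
The plan is to reduce the statement to two ingredients: the classical Poincar\'e inequality on a bounded convex domain $D$ with the sharp Payne--Weinberger constant $\diam(D)/\pi$, and its weighted refinement due to Chua--Wheeden, which applies to any nonnegative weight that becomes concave after being raised to some positive power. Once one knows that a suitable power of $\beta$ is concave on $\omega_\beta$, the theorem follows by a one-line specialization of the cited results, so there is essentially no obstacle \emph{internal} to this proof; the substantive work sits in the concavity statement invoked below.

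First I would check that the data are admissible. By~\eqref{E:omega beta} the support $\omega_\beta$ is a (nondegenerate) box, hence bounded, convex, and with Lipschitz boundary, and $\beta\ge 0$ is bounded, so $L^2(\omega_\beta,\beta)$ and $H^1(\omega_\beta,\beta)$ are well defined in the sense of the previous subsection and $\diam(\omega_\beta)>0$. Moreover, expanding $\int_{\omega_\beta}(v-c)^2\beta$ as a quadratic in $c\in\RR$ shows that $c_\beta=\int_{\omega_\beta}v\,\beta\big/\int_{\omega_\beta}\beta$ is exactly its minimizer, i.e.\ $\|v-c_\beta\|_{L^2(\omega_\beta,\beta)}=\min_{c\in\RR}\|v-c\|_{L^2(\omega_\beta,\beta)}$; this is the recentering for which a clean weighted constant can be expected, which is why the weighted mean appears in the statement.

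Next I would invoke the concavity property of B-splines, Theorem~\ref{T:B-spline concave} (proved below): there is $s>0$ such that $\beta^{s}$ is concave on $\omega_\beta=\supp\beta$. This places $\beta$ in the class of weights for which the weighted Poincar\'e inequality on a bounded convex domain holds with the \emph{unweighted} Payne--Weinberger constant; applying~\cite[Thm.~1.1 and Thm.~1.2]{Chua-Wheeden-06} with $D=\omega_\beta$ and weight $\beta$ then yields
\[
\|v-c_\beta\|_{L^2(\omega_\beta,\beta)}\le\frac{\diam(\omega_\beta)}{\pi}\,\|\nabla v\|_{L^2(\omega_\beta,\beta)},
\]
which is the assertion. (Alternatively one could reprove this directly in the spirit of Payne--Weinberger: bisect $\omega_\beta$ into thin convex ``needles'' aligned with a fixed direction, reduce on each needle to a one-dimensional weighted Wirtinger inequality, and use the concavity of $\beta^{s}$ together with a monotone-rearrangement comparison to dominate the one-dimensional weighted problem by its unweighted counterpart; but this merely reconstructs the Chua--Wheeden argument, so citing it is cleaner.)

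The genuinely nontrivial point is Theorem~\ref{T:B-spline concave} itself, i.e.\ the existence of a positive power making a tensor-product B-spline concave on its box support. For a univariate degree-$p$ B-spline one expects $b^{1/p}$ to work, since each polynomial piece is a power of an affine or of a concave quadratic factor and consecutive pieces join in a concavity-preserving ($C^1$) way; but for the $d$-variate product $\beta=\beta_1\cdots\beta_d$ of~\eqref{E:tensor product beta} concavity is \emph{not} inherited factorwise --- already $x_1x_2$ on a box fails to be concave --- so a strictly smaller exponent, tied to the concavity exponents of the univariate factors (of the form $1/(p_1+\dots+p_d)$ up to constants), must be used, and establishing its validity is where the real effort goes. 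Taking that result as given, the reduction above is immediate, and it also explains why the final constant is the dimension-free $\diam(\omega_\beta)/\pi$ rather than something depending on $\beta$ or on $d$.
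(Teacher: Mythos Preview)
Your proposal is correct and follows essentially the same route as the paper: the paper states the theorem as an immediate consequence of \cite[Thm.~1.1 and Thm.~1.2]{Chua-Wheeden-06} once one knows that some positive power of $\beta$ is concave on $\omega_\beta$, which is precisely Theorem~\ref{T:B-spline concave}. Your additional remarks (convexity of $\omega_\beta$, the variational characterization of $c_\beta$, and the sketch of the Payne--Weinberger bisection argument) are accurate but not needed for the paper's argument, and your guess for the tensor-product exponent should read $1/(pd)$ rather than $1/(p_1+\dots+p_d)$ in the equal-degree setting treated here.
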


In order to prove that $\beta^s$ is a concave function on its support, for some $s>0$, when $\beta$ is a tensor product B-spline basis function, we first notice that the result holds for univariate B-splines thanks to the Brunn--Minkowski inequality, as explained in~\cite[Section 2]{concave}. More precisely, the following result holds.

\begin{lemma}\label{L:concave}
 Let $\beta$ be a univariate B-spline basis function of degree $p$. Then, $\beta^{\frac1p}$ is concave on its support.
\end{lemma}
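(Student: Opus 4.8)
The plan is to reduce the concavity of $\beta^{1/p}$ to a statement about the $(p+1)$-fold iterated integral (convolution) representation of a B-spline of degree $p$, and then to invoke the Brunn--Minkowski inequality. First I would recall that, up to an affine reparametrisation which does not affect concavity, a univariate B-spline $\beta$ of degree $p$ on its support can be written (following~\cite[Section 2]{concave}) as the volume of a slice: there is a convex body $K\subset\RR^{p+1}$ (a simplex or a suitable polytope determined by the local knot vector) and a linear map $\pi:\RR^{p+1}\to\RR$ such that, for $t$ in the support of $\beta$,
\begin{equation*}
\beta(t) \;=\; \mathrm{vol}_{p}\big(K\cap \pi^{-1}(t)\big),
\end{equation*}
i.e.\ $\beta(t)$ is the $p$-dimensional measure of the intersection of the convex set $K$ with a hyperplane orthogonal to a fixed direction, moving with parameter $t$. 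Equivalently, $\beta$ arises as an iterated convolution of $p$ characteristic functions of intervals (the Curry--Schoenberg / B-spline-as-convolution picture), which is exactly such a slice function.

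Next I would apply the Brunn--Minkowski inequality to the convex body $K$. For a convex body in $\RR^{p+1}$, the function $t\mapsto \big(\mathrm{vol}_{p}(K\cap\pi^{-1}(t))\big)^{1/p}$ is concave on the interval where the slices are nonempty; this is the classical consequence of Brunn--Minkowski applied to the parallel sections of a convex set (the section at parameter $\alpha t_0+(1-\alpha)t_1$ contains the Minkowski combination $\alpha(K\cap\pi^{-1}(t_0))+(1-\alpha)(K\cap\pi^{-1}(t_1))$ by convexity of $K$, and Brunn--Minkowski gives the power-$1/p$ concavity of the $p$-dimensional volumes). Combining this with the slice representation of the previous paragraph yields that $\beta^{1/p}$ is concave on $\supp\beta$, which is what we want.

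The main obstacle — really the only nontrivial point — is establishing the slice/convolution representation cleanly and checking that the relevant set $K$ is genuinely convex even in the presence of repeated knots in the local knot vector (which collapse some of the interval factors to points or change the dimension of the effective polytope). I would handle the generic case of simple knots first via the standard $p$-fold convolution $\beta = \chi_{[0,\ell_1]}\ast\cdots\ast\chi_{[0,\ell_p]}$ (suitably normalised and translated), whose graph is the slice function of the box $\prod_{i}[0,\ell_i]$ under the summation map $\pi(y_1,\dots,y_p)=y_1+\dots+y_p$ — a convex body — so Brunn--Minkowski applies directly. The repeated-knot case is then recovered either by a limiting argument (knots coalescing, with uniform concavity passing to the limit since a pointwise limit of concave functions is concave) or by noting that a repeated knot merely replaces an interval factor $[0,\ell_i]$ with a degenerate one, which keeps $K$ convex (of possibly lower dimension) while the exponent $1/p$ still refers to the fixed degree $p$; I would remark that $\beta^{1/p}$ remains concave since it is a nonnegative concave function raised to no power, and restricting a concave function to a subinterval of its support preserves concavity. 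This completes the argument, and the detailed verification is exactly the content of~\cite[Section 2]{concave}, to which I would defer for the routine bookkeeping.
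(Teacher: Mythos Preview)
Your approach coincides with the paper's: the paper does not prove this lemma at all but simply attributes it to the Brunn--Minkowski argument in \cite[Section~2]{concave}, which is precisely the slice-of-a-convex-body route you sketch. So at the level of strategy you are aligned with the paper and with Curry--Schoenberg.

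One technical slip is worth flagging. The convolution formula you write for the ``generic'' case, $\beta=\chi_{[0,\ell_1]}*\cdots*\chi_{[0,\ell_p]}$, does \emph{not} represent a general B-spline with simple knots: a convolution of interval indicators is only a very special B-spline (essentially the cardinal one up to dilation of each factor), and in any case the $p$-fold convolution yields a piecewise polynomial of degree $p-1$, so the associated box lives in $\RR^{p}$, its slices are $(p-1)$-dimensional, and Brunn--Minkowski would give concavity of $\beta^{1/(p-1)}$ rather than $\beta^{1/p}$. The correct convex body for an arbitrary knot vector is the Curry--Schoenberg \emph{simplex} in $\RR^{p+1}$ whose $p+2$ vertices project onto the $p+2$ local knots; its parallel sections are $p$-dimensional and give the right exponent $1/p$, and this construction handles repeated knots directly (coincident knots simply place several vertices in the same hyperplane, and the simplex remains convex). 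Since you already mention the simplex in your first paragraph and ultimately defer to \cite{concave}, the overall plan stands --- just drop the box/convolution detour, which is both unnecessary and incorrect as stated.
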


On the other hand, the following \emph{generalized Cauchy-Schwarz 
inequality} can be proved by mathematical induction:
\begin{equation}\label{E:generalized CS inequality}
 (a_1^d+b_1^d)^\frac1d(a_2^d+b_2^d)^\frac1d\dots (a_d^d+b_d^d)^\frac1d \geq 
a_1a_2\dots a_d+b_1b_2\dots b_d,
\end{equation}
for all nonnegative numbers $a_1,a_2,\dots,a_d,b_1,b_2,\dots,b_d$.

Now, as a consequence of~\eqref{E:generalized CS inequality} we have following 
result.

\begin{lemma}\label{L:product of concave functions}
 If $f_1,f_2,\dots,f_d$ are nonnegative concave functions on a convex set 
$A\subset \RR^d$ then $(f_1f_2\dots f_d)^{\frac1d}$ is concave on $A$.
\end{lemma}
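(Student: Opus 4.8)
\textbf{Proof plan for Lemma~\ref{L:product of concave functions}.}

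The plan is to reduce the claim to the generalized Cauchy--Schwarz inequality~\eqref{E:generalized CS inequality} applied pointwise at the two endpoints of a segment. Fix $x,y\in A$ and $\alpha\in(0,1)$, and write $z:=\alpha x+(1-\alpha)y$. Since $A$ is convex, $z\in A$, so each $f_i$ is defined at $x$, $y$ and $z$. First I would record the hypothesis that each $f_i$ is nonnegative and concave, which gives
\begin{equation*}
f_i(z)\ \ge\ \alpha f_i(x)+(1-\alpha)f_i(y)\ \ge\ 0,\qquad i=1,\dots,d.
\end{equation*}
Taking the product over $i$ and using that all the quantities involved are nonnegative (so the product is monotone in each factor), it follows that
\begin{equation*}
\big(f_1(z)\cdots f_d(z)\big)^{\frac1d}\ \ge\ \prod_{i=1}^d\big(\alpha f_i(x)+(1-\alpha)f_i(y)\big)^{\frac1d}.
\end{equation*}

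The key step is then to bound the right-hand side from below by $\alpha\,(f_1(x)\cdots f_d(x))^{\frac1d}+(1-\alpha)\,(f_1(y)\cdots f_d(y))^{\frac1d}$. To do this I would set $a_i:=\alpha^{1/d} f_i(x)^{1/d}$ and $b_i:=(1-\alpha)^{1/d} f_i(y)^{1/d}$, which are nonnegative, so that $a_i^d+b_i^d=\alpha f_i(x)+(1-\alpha)f_i(y)$. Applying~\eqref{E:generalized CS inequality} to these $a_i,b_i$ yields
\begin{equation*}
\prod_{i=1}^d\big(\alpha f_i(x)+(1-\alpha)f_i(y)\big)^{\frac1d}\ \ge\ \prod_{i=1}^d a_i+\prod_{i=1}^d b_i\ =\ \alpha\Big(\prod_{i=1}^d f_i(x)\Big)^{\frac1d}+(1-\alpha)\Big(\prod_{i=1}^d f_i(y)\Big)^{\frac1d},
\end{equation*}
where the last equality uses $\prod_i \alpha^{1/d}=\alpha$ and $\prod_i(1-\alpha)^{1/d}=1-\alpha$. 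Chaining the two displayed inequalities gives exactly the concavity of $(f_1\cdots f_d)^{1/d}$ on $A$, as required.

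I do not expect a serious obstacle here; the proof is essentially a repackaging of~\eqref{E:generalized CS inequality}. The only points that need a little care are (i) checking that the pointwise product of the lower bounds is still a valid lower bound for the product $f_1(z)\cdots f_d(z)$, which is where nonnegativity of all the $f_i$ is used, and (ii) the bookkeeping in the substitution $a_i^d+b_i^d=\alpha f_i(x)+(1-\alpha)f_i(y)$ together with $\prod_i a_i=\alpha(\prod_i f_i(x))^{1/d}$. Both are routine. (As an aside, one could instead deduce the lemma from the concavity of the geometric mean composed with the concavity of each $f_i$, since $t\mapsto(t_1\cdots t_d)^{1/d}$ is concave and nondecreasing in each argument on $[0,\infty)^d$; but the direct route via~\eqref{E:generalized CS inequality} is self-contained given what has already been established.)
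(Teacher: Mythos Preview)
Your proof is correct and follows exactly the same approach as the paper: the paper's one-line argument also applies~\eqref{E:generalized CS inequality} with the very same substitution $a_i=(\alpha f_i(x))^{1/d}$, $b_i=((1-\alpha)f_i(y))^{1/d}$ and then invokes Definition~\ref{D:concave function}. You have simply spelled out the intermediate steps (the product of the concavity inequalities and the bookkeeping $\prod_i a_i=\alpha(\prod_i f_i(x))^{1/d}$) that the paper leaves implicit.
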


\begin{proof}
 The assertion of this lemma follows from~\eqref{E:generalized CS inequality} taking $a_i=(\alpha f_i(x))^\frac1d$ and $b_i=((1-\alpha)f_i(y))^\frac1d$, for $i=1,\dots,d$, and regarding Definition~\ref{D:concave function}.
\end{proof}

Finally, using Lemma~\ref{L:concave} and the last lemma we can prove the following result.

\begin{theorem}\label{T:B-spline concave}
 Let $\beta$ be a tensor product $d$-variate B-spline basis function as in~\eqref{E:tensor product beta}. If the univariate B-splines that define $\beta$ are of degree $p$, then 
$\beta^{\frac{1}{pd}}$ is concave on its support.
\end{theorem}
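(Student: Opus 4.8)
The plan is to combine Lemma~\ref{L:concave} with Lemma~\ref{L:product of concave functions} in a straightforward way, the only subtlety being that each univariate factor $\beta_j$ depends on a single coordinate $x_j$ but must be regarded as a function on the $d$-dimensional box $\omega_\beta$. First I would write $\beta(x)=\beta_1(x_1)\cdots\beta_d(x_d)$ as in~\eqref{E:tensor product beta}, where each $\beta_j$ is a univariate B-spline of degree $p$, and by Lemma~\ref{L:concave} the function $\beta_j^{1/p}$ is concave on $\supp\beta_j\subset\RR$.

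Next I would introduce the functions $g_j:\omega_\beta\to\RR$ defined by $g_j(x):=\beta_j(x_j)^{1/p}$; that is, $g_j$ is the composition of the concave univariate function $\beta_j^{1/p}$ with the (affine, hence concavity-preserving) projection $x\mapsto x_j$. Since $\omega_\beta=\supp\beta_1\times\cdots\times\supp\beta_d$ is convex and the projections map $\omega_\beta$ onto $\supp\beta_j$, each $g_j$ is a nonnegative concave function on the convex set $\omega_\beta\subset\RR^d$. I would verify the concavity of $g_j$ directly from Definition~\ref{D:concave function}: for $x,y\in\omega_\beta$ and $0<\alpha<1$, the $j$-th coordinate of $\alpha x+(1-\alpha)y$ is $\alpha x_j+(1-\alpha)y_j$, so $g_j(\alpha x+(1-\alpha)y)=\beta_j^{1/p}(\alpha x_j+(1-\alpha)y_j)\ge\alpha\beta_j^{1/p}(x_j)+(1-\alpha)\beta_j^{1/p}(y_j)=\alpha g_j(x)+(1-\alpha)g_j(y)$.

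Then I would apply Lemma~\ref{L:product of concave functions} to the family $g_1,\dots,g_d$ on $A=\omega_\beta$, which gives that $(g_1g_2\cdots g_d)^{1/d}$ is concave on $\omega_\beta$. Finally I would observe that
\begin{equation*}
(g_1(x)g_2(x)\cdots g_d(x))^{1/d}=\left(\beta_1(x_1)^{1/p}\cdots\beta_d(x_d)^{1/p}\right)^{1/d}=\left(\beta_1(x_1)\cdots\beta_d(x_d)\right)^{1/(pd)}=\beta(x)^{1/(pd)},
\end{equation*}
so $\beta^{1/(pd)}$ is concave on $\omega_\beta=\supp\beta$, which is exactly the claim. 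I do not expect any real obstacle here: the argument is a clean chaining of the two preceding lemmas, and the only point requiring a line of care is the passage from the genuinely univariate concavity in Lemma~\ref{L:concave} to concavity on the higher-dimensional box, handled by composing with coordinate projections as above.
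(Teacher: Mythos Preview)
Your proof is correct and follows essentially the same approach as the paper: define $f_j(x):=\beta_j(x_j)^{1/p}$, invoke Lemma~\ref{L:concave} for their concavity, and then apply Lemma~\ref{L:product of concave functions} to conclude. The only difference is that you spell out explicitly why the univariate concavity of $\beta_j^{1/p}$ lifts to concavity of $g_j$ on the $d$-dimensional box via the coordinate projection, a point the paper leaves implicit.
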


\begin{proof}
 From~\eqref{E:tensor product beta} we have that $\beta(x)= \beta_1(x_1)\dots\beta_j(x_j)\dots \beta_d(x_d)$, where $\beta_j$ 
are univariate B-splines for $j=1,2,\dots,d$, and $x_j$ denotes the $j$-th 
component of $x\in\RR^d$. Now, by Lemma~\ref{L:concave} we have that 
$f_j(x):=\beta_j(x_j)^\frac1p$ is concave, for $j=1,2,\dots,d$. 
Finally, applying Lemma~\ref{L:product of concave functions} we obtain that 
$(\beta_1(x_1)^\frac1p\dots\beta_j(x_j)^\frac1p\dots 
\beta_d(x_d)^\frac1p)^\frac1d=\beta(x)^{\frac{1}{pd}}$ is concave on its support.
 \end{proof}

\subsection{A weighted Friedrichs inequality}

When considering Dirichlet boundary conditions as in problem~\eqref{E:strong form}, it is useful to have a suitable weighted 
Poincar\'e-Friedrichs inequality in the case that the weight function is a B-spline which does not vanish on a part of the 
boundary of $\Omega$. More precisely, we have the following result, which generalizes~\cite[Lemma 5.1]{Veeser-Verfurth} to high order splines.

\begin{theorem}[Weighted Friedrichs inequality]\label{T:Friedrichs}
Let $\beta$ be a tensor product B-spline basis function such that $\beta_{|_{\partial \Omega}}\not\equiv 0$. Then, there exists a constant $C_F>0$, independent of $\beta$, such that
\begin{equation}\label{E:Friedrichs inequality}
 \|v\|_{L^2(\omega_\beta,\beta)}\le C_F\diam(\omega_\beta)\|\nabla 
v\|_{L^2(\omega_\beta,\beta)},
\end{equation}
for all $v\in H^1(\omega_\beta,\beta)$ satisfying $v_{|_{\Gamma_{\beta}}}\equiv 0$, where $\Gamma_{\beta}:=\partial \Omega\cap\partial \omega_\beta$ is a set with positive $(d-1)$-dimensional Lebesgue measure.
\end{theorem}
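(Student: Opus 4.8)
The plan is to reduce the Friedrichs-type inequality~\eqref{E:Friedrichs inequality} to the already-established weighted Poincar\'e inequality (Theorem~\ref{T:Poincare inequality}) by controlling the weighted mean $c_\beta$ using the boundary vanishing of $v$. First I would exploit the tensor product structure: since $\beta_{|_{\partial\Omega}}\not\equiv 0$, at least one univariate factor $\beta_j$ does not vanish at one of the endpoints of $[0,1]$, say $\beta_j$ does not vanish at $0$; this means the local knot vector of $\beta_j$ has its left endpoint repeated $p_j+1$ times, so $\supp\beta_j = [0,t_j]$ for some $t_j>0$ and $\Gamma_\beta$ contains (up to a null set) the face $\{x_j = 0\}\cap\omega_\beta$, which has positive $(d-1)$-measure. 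I would then fix such a direction and use a slicing/Fubini argument in the $x_j$ variable.

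The key step is a one-dimensional weighted estimate along segments parallel to the $x_j$-axis. For almost every slice with the transverse coordinates $\hat x$ fixed, $v(\cdot,\hat x)$ vanishes at the left endpoint of the segment $\{x_j : (x_j,\hat x)\in\omega_\beta\}$ (because $v_{|_{\Gamma_\beta}}\equiv 0$), so by the fundamental theorem of calculus $|v(x_j,\hat x)|\le \int |\partial_{x_j} v(s,\hat x)|\,ds$ over that segment. Multiplying by the weight $\beta(x_j,\hat x)$, integrating in $x_j$, using Cauchy-Schwarz against the weight, and then integrating over the transverse variables $\hat x$ yields $\|v\|_{L^2(\omega_\beta,\beta)} \le C\,\diam(\omega_\beta)\,\|\partial_{x_j}v\|_{L^2(\omega_\beta,\beta^\sharp)}$ for a comparison weight $\beta^\sharp$; here one must be careful that the weight appearing after Cauchy-Schwarz on the right is again (comparable to) $\beta$. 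Because $\beta(x) = \beta_j(x_j)\,\widehat\beta(\hat x)$ splits, the transverse factor $\widehat\beta(\hat x)$ passes through untouched, and the only issue is the univariate factor $\beta_j$ in the $x_j$-integral: one needs a one-dimensional weighted Hardy/Friedrichs inequality for $\beta_j$, which follows from Lemma~\ref{L:concave} (so $\beta_j^{1/p}$ is concave on $[0,t_j]$, hence $\beta_j$ is a doubling/admissible Muckenhoupt-type weight on its interval and supports a weighted Friedrichs inequality with a dimension- and $\beta$-independent constant, as in the one-dimensional case treated in~\cite{Chua-Wheeden-06,Veeser-Verfurth}).

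An alternative, cleaner route that I would probably prefer is: combine Theorem~\ref{T:Poincare inequality} with a bound of the form $|c_\beta|\,\|1\|_{L^2(\omega_\beta,\beta)} \le C\,\diam(\omega_\beta)\,\|\nabla v\|_{L^2(\omega_\beta,\beta)}$. To get this, note $c_\beta = \frac{1}{\int\beta}\int_{\omega_\beta} v\,\beta$; writing $v$ along a line from a boundary point where $v=0$ and integrating against $\beta\,dx$ bounds $|\int v\,\beta|$ by a weighted integral of $|\nabla v|$, again via the univariate concavity of the non-vanishing factor to keep constants uniform. Then $\|v\|_{L^2(\omega_\beta,\beta)}\le \|v-c_\beta\|_{L^2(\omega_\beta,\beta)} + |c_\beta|\,\|1\|_{L^2(\omega_\beta,\beta)}$ and both terms are controlled. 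The main obstacle in either approach is verifying that all constants are independent of $\beta$ (i.e., of the level and of the particular knot configuration): this hinges on the scaling invariance of the weighted one-dimensional inequality and on the fact that the relevant geometric quantity is exactly $\diam(\omega_\beta)$, so one should set up the one-dimensional lemma on a reference configuration and rescale, tracking that concavity of $\beta_j^{1/p}$ is scale-invariant. The measure-theoretic hypothesis that $\Gamma_\beta$ has positive $(d-1)$-measure is what guarantees the set of "good slices" through which the boundary-vanishing trace argument applies has full transverse measure.
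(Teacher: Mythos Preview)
Your overall strategy---split $\|v\|_{L^2(\omega_\beta,\beta)}\le \|v-c_\beta\|_{L^2(\omega_\beta,\beta)}+|c_\beta|\,\|1\|_{L^2(\omega_\beta,\beta)}$, handle the first term by Theorem~\ref{T:Poincare inequality}, and kill the mean using that $v$ vanishes on $\Gamma_\beta$---matches the paper's. The technical core, however, is different. The paper does \emph{not} slice in the normal direction or invoke a one-dimensional weighted Hardy/Friedrichs inequality. Instead it proves a weighted \emph{trace identity} (Proposition~\ref{P:weighted trace theorem}): for a boundary cell $Q$ with face $S\subset\partial\Omega$, the divergence theorem applied to $(v-c)\beta\,\gamma_{Q,S}$ together with the explicit form $\beta_i(x_i)=\tfrac{|S|^p}{|Q|^p}|x_i-a_i|^p$ on $Q$ gives
\[
\frac{\int_S |v-c|\beta}{\int_S\beta}\le \frac{1}{(\int_{Q}\beta)^{1/2}}\Bigl(\|v-c\|_{L^2(Q,\beta)}+\tfrac{1}{p+1}\tfrac{|Q|}{|S|}\|\nabla v\|_{L^2(Q,\beta)}\Bigr),
\]
so the weight $\beta$ appears on the right \emph{automatically}, with completely explicit constants. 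Summing over the faces $S\subset\Gamma_\beta$ and combining with Theorem~\ref{T:Poincare inequality} yields~\eqref{E:Friedrichs inequality}.

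Your slicing route is plausible but the step you flag yourself is genuinely delicate and not yet justified: after writing $v(x_j,\hat x)=\int_0^{x_j}\partial_{x_j}v$ and applying Cauchy--Schwarz, the weight that lands on $|\partial_{x_j}v|^2$ is not $\beta_j$ but something like $\bigl(\int \beta_j^{-1}\bigr)$ or a Hardy kernel, and you then need $\int_0^t|v|^2\beta_j\le Ct^2\int_0^t|v'|^2\beta_j$ with $C$ uniform in the knot configuration. Concavity of $\beta_j^{1/p}$ gives the weighted \emph{Poincar\'e} inequality of~\cite{Chua-Wheeden-06}, but not directly a weighted Friedrichs; and asserting that $\beta_j$ is ``a doubling/admissible Muckenhoupt-type weight'' with uniform constants is exactly the point that needs proof (note $\beta_j$ vanishes to order $p$ at the right endpoint, so $\beta_j\notin A_2$ in general). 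The paper's trace-identity argument sidesteps this entirely by exploiting the exact polynomial structure of $\beta$ on the boundary cell, which is what makes the constants explicit and manifestly independent of $\beta$.
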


The Friedrichs inequality stated in the last theorem can be proved following the same steps from~\cite{Veeser-Verfurth}. In particular, such inequality can be obtained as a consequence of the weighted Poincar\'e inequality given in Theorem~\ref{T:Poincare inequality} and a suitable weighted trace inequality. 

The following result is a trace theorem where B-splines are used as weight functions and can be seen as an extension of~\cite[Proposition 4.3]{Veeser-Verfurth}; its proof follows exactly the same lines, but we include it here for completeness.

\begin{proposition}[Weighted trace theorem]\label{P:weighted trace theorem}
Let $\beta$ be a tensor product B-spline basis function given by
\begin{equation*}
 \beta(x)=\beta_1(x_1)\dots\beta_j(x_j)\dots\beta_d(x_d),
\end{equation*}
where $\beta_j$ are univariate B-splines of a fixed degree $p$, for $j=1,2,\dots,d$, and $x_j$ denotes the $j$-th 
component of $x\in\RR^d$. Assume that $\beta_{|_{\partial \Omega}}\not\equiv 0$, and let $Q\subset\supp\beta$ be a cell of the associated Cartesian grid that has a side $S\subset\partial\Omega$ such that $\beta_{|_{S}}\not\equiv 0$. Then, 
\begin{equation}\label{E:aux averages}
  \frac{\int_S w\beta}{\int_S \beta} - \frac{\int_Q w\beta}{\int_Q \beta} = \frac{1}{p+1}\frac{\int_Q \gamma_{Q,S}\cdot \nabla w \,\beta}{\int_Q \beta},\qquad\forall\,w\in W^{1,1}(Q),
 \end{equation}
where $\gamma_{Q,S}(x):=(x_i-a_i){\bf e}_i$, for $x = (x_1,\dots,x_d)\in Q$. Here, $a = (a_1,\dots,a_d)$ is any vertex of $Q$ that does not belong to $S$ and $i$ denotes the coordinate direction given by the unit vector ${\bf e}_i$ that is orthogonal to the side $S$. 
\end{proposition}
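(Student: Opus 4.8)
The plan is to exploit the tensor-product structure of both $\beta$ and $Q$ to localize the whole computation to the single coordinate direction $i$ orthogonal to $S$, where an elementary first-order differential identity for the univariate factor $\beta_i$ on the element $Q$ does all the work. I would carry out the case $S\subset\{x\in\partial\Omega:x_i=0\}$ in detail; the case $S\subset\{x_i=1\}$ is mirror-symmetric.

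First I would fix notation: write $x=(x_i,x')$ with $x'\in\RR^{d-1}$ collecting the coordinates other than the $i$-th, so that $\beta(x)=\beta_i(x_i)\,\beta'(x')$ with $\beta'(x'):=\prod_{j\neq i}\beta_j(x_j)$ and $Q=Q_i\times Q'$. Since $S$ is the side of $Q$ lying on $\partial\Omega$ and is contained in $\{x_i=0\}$, we have $Q_i=[0,a_i]$ and $S=\{0\}\times Q'$, where $a_i>0$ is the common value of the $i$-th coordinate of every vertex of $Q$ not belonging to $S$ — which is precisely what makes $\gamma_{Q,S}$ well defined. The hypothesis $\beta_{|_S}\not\equiv 0$ forces $\beta_i(0)\neq 0$, and, by the open-knot-vector assumption together with the local support property $\supp b_j=[\xi_j,\xi_{j+p+1}]$, this can only happen when $\beta_i$ is the \emph{first} univariate B-spline of its level. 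Consequently $\supp\beta_i=Q_i=[0,a_i]$, so on $Q$ the function $\beta_i$ is a polynomial of degree $p$ vanishing to order $p$ at $x_i=a_i$, with $\beta_i(0)=1$ by the partition of unity property; hence $\beta_i(x_i)=c\,(x_i-a_i)^p$ on $Q_i$ for some $c\neq 0$.

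From $\beta_i(x_i)=c\,(x_i-a_i)^p$ one gets $(x_i-a_i)\,\beta_i'(x_i)=p\,\beta_i(x_i)$ on $Q_i$, hence the key identity
\[
\frac{\partial}{\partial x_i}\big((x_i-a_i)\,\beta_i(x_i)\big)=(p+1)\,\beta_i(x_i)\qquad\text{on }Q .
\]
Given $w\in W^{1,1}(Q)$, I would multiply by $w$, integrate over $Q=Q_i\times Q'$, and integrate by parts in $x_i$ inside the iterated integral: the endpoint term at $x_i=a_i$ drops out because $(x_i-a_i)=0$ there, while the endpoint term at $x_i=0$ collapses — using $\beta_i(0)=1$, the fact that $S$ is flat and hence carries surface measure $dx'$, and $\beta_{|_S}=\beta'$ — to $a_i\int_S w\,\beta$; the remaining volume term is $-\int_Q(x_i-a_i)\,\partial_{x_i}w\,\beta=-\int_Q\gamma_{Q,S}\cdot\nabla w\,\beta$, since $\gamma_{Q,S}\cdot\nabla w=(x_i-a_i)\,\partial_{x_i}w$ by the definition of $\gamma_{Q,S}$. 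This yields
\[
(p+1)\int_Q w\,\beta = a_i\int_S w\,\beta - \int_Q\gamma_{Q,S}\cdot\nabla w\,\beta,\qquad\forall\,w\in W^{1,1}(Q).
\]
Finally I would eliminate $a_i$ by specializing to $w\equiv 1$, which gives $a_i=(p+1)\,\dfrac{\int_Q\beta}{\int_S\beta}$ (both integrals being strictly positive since $\beta$ is positive on the interiors of $Q$ and of $S$); substituting this value back into the last display, dividing through by $(p+1)\int_Q\beta$, and rearranging gives exactly~\eqref{E:aux averages}.

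I do not anticipate a genuine obstacle. The two points deserving care are: (i) the identification of $\beta_i$ as the first (respectively, in the mirror case, the last) univariate B-spline of its level, together with the ensuing factorization $\beta_i(x_i)=c\,(x_i-a_i)^p$ on the single element $Q_i$ — both immediate from the open knot vector and from $\supp b_j=[\xi_j,\xi_{j+p+1}]$; and (ii) keeping the orientation of $\gamma_{Q,S}$ straight in the mirror case $S\subset\{x_i=1\}$, where $Q_i=[a_i,1]$ with $a_i$ again the common $i$-th coordinate of the vertices of $Q$ off $S$, $\beta_i$ vanishes to order $p$ at $x_i=a_i$, and the same integration by parts goes through verbatim.
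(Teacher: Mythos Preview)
Your proof is correct and follows essentially the same route as the paper: both hinge on the explicit form $\beta_i(x_i)=c\,(x_i-a_i)^p$ on $Q$ (equivalently $\gamma_{Q,S}\cdot\nabla\beta=p\beta$) and a single integration by parts in the $i$-th direction. The only cosmetic difference is that the paper first records the unweighted identity $\strokedint_S v-\strokedint_Q v=\strokedint_Q\gamma_{Q,S}\cdot\nabla v$ via the divergence theorem and then specializes to $v=w\beta$, whereas you integrate by parts directly against $w$ and eliminate the constant $a_i$ by setting $w\equiv 1$; the underlying computation is the same.
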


\begin{proof}
In this proof we use the symbol $\strokedint_A f$ to denote the average of a function $f$ over a set $A$, that is, $\frac{1}{|A|}\int_A f$.

If $v\in W^{1,1}(Q)$, as an immediate consequence of Gauss divergence theorem (see also~\cite[Proposition 4.2]{Veeser-Verfurth}) to the vector field $v\gamma_{Q,S}$, it follows that
\begin{equation}\label{E:averages}
 \strokedint_S v -\strokedint_Q v = \strokedint_Q \gamma_{Q,S}\cdot\nabla v.
\end{equation}
Now, let $w\in W^{1,1}(Q)$. Applying~\eqref{E:averages} with $v=w\beta$ we obtain
\begin{equation}\label{E:averages 2}
\strokedint_S w\beta -\strokedint_Q w\beta = \strokedint_Q \gamma_{Q,S}\cdot\nabla (w\beta)=\strokedint_Q \gamma_{Q,S}\cdot\nabla w\,\beta+\strokedint_Q \gamma_{Q,S}\cdot\nabla\beta\,w.
\end{equation}
Let $i$ be the coordinate direction given by the unit vector ${\bf e}_i$ that is orthogonal to the side $S$. Taking into account that $\beta_{|_{S}}\not\equiv 0$ and that $S\subset\partial\Omega$, we have that $\beta_i(x_i) = \frac{|S|^p}{|Q|^p}|x_i-a_i|^p$, where  $a = (a_1,\dots,a_d)$ is any vertex of $Q$ that does not belong to $S$, which in turn implies that 
\begin{equation*}
  \gamma_{Q,S}(x)\cdot \nabla \beta(x) = p\beta(x),\qquad\forall\,x\in Q.
 \end{equation*}
Thus, from~\eqref{E:averages 2} we obtain that
$$\strokedint_S w\beta -(p+1)\strokedint_Q w\beta =\strokedint_Q \gamma_{Q,S}\cdot\nabla w\,\beta.$$
Finally,~\eqref{E:aux averages} follows from the last equation dividing both sides by $\int_S \beta$, and taking into account that $\strokedint_S \beta=(p+1)\strokedint_Q \beta$.
\end{proof}

We finish this section using the last proposition for proving the weighted Friedrichs inequality stated in Theorem~\ref{T:Friedrichs}.

\begin{proof}[Proof of Theorem~\ref{T:Friedrichs}]
Let $c\in\RR$ and let $v\in H^1(\omega_\beta,\beta)$ such that $v_{|_{\Gamma_{\beta}}}\equiv 0$. Then, 
 \begin{align}
  \|v\|_{L^2(\omega_\beta,\beta)}&\le \|v-c\|_{L^2(\omega_\beta,\beta)}+\|c\|_{L^2(\omega_\beta,\beta)}\notag\\
&\le \|v-c\|_{L^2(\omega_\beta,\beta)}+\frac{\int_{\omega_\beta} \beta}{\int_{\Gamma_{\beta}} \beta} \int_{\Gamma_{\beta}} |v-c|\beta.\label{E:aux Friedrichs}
 \end{align}
Let $S\subset \Gamma_{\beta}$ and $Q_S$ be a cell that has $S$ as a side. Applying Proposition~\ref{P:weighted trace theorem} with $w=|v-c|$, taking into account that $\max_{x\in Q} |\gamma_{Q,S}(x)|\le \frac{|Q|}{|S|}$ and using H\"older inequality we have
\begin{align*}
 \frac{\int_S |v-c|\beta}{\int_S \beta}&\le \frac{\int_{Q_S} |v-c|\beta}{\int_{Q_S} \beta}+\frac{1}{p+1}\frac{|Q|}{|S|}\frac{\int_{Q_S} |\nabla v|\beta}{\int_{Q_S} \beta}\\
&\le \frac{1}{\left(\int_{Q_S} \beta\right)^{\frac12}}\left( \|v-c\|_{L^2(Q_S,\beta)}+ \frac{1}{p+1}\frac{|Q|}{|S|}\|\nabla v\|_{L^2(Q_S,\beta)}\right).
\end{align*}
Now we use the last inequality to bound the second term in the right hand side of~\eqref{E:aux Friedrichs}, 
\begin{align*}
 \frac{\int_{\omega_\beta} \beta}{\int_{\Gamma_{\beta}} \beta} \int_{\Gamma_{\beta}} |v-c|\beta &\le \int_{\omega_\beta} \beta\sum_{S\subset\Gamma_\beta}\frac{\int_S |v-c|\beta}{\int_S \beta}\\
&\le   \sqrt{d} \left(\int_{\omega_\beta}\beta\right)\left(\sum_{S\subset\Gamma_\beta} \frac{1}{\int_{Q_S} \beta}\right)^{\frac12}\left( \|v-c\|_{L^2(\omega_\beta,\beta)}+ \frac{1}{p+1}\diam(\omega_\beta)\|\nabla v\|_{L^2(\omega_\beta,\beta)}\right).
\end{align*}
where we have used the Cauchy-Schwarz inequality. Finally,~\eqref{E:Friedrichs inequality} follows taking into account~\eqref{E:aux Friedrichs}, Theorem~\ref{T:Poincare inequality} and the last inequality.
\end{proof}

\section{A computable upper bound for the error}\label{S:estimators}

In this section we use the Poincar\'e type inequalities proved in the previous section in order to get an a posteriori upper bound for the energy error when computing the discrete solution of problem~\eqref{E:weak form} using the hierarchical schemes proposed in Section~\ref{S:discretization}. Once such key inequalities are available, the procedure to derive reliable residual type a posteriori error estimators follows the standard steps used when considering classical finite elements. 

Let $u\in H^1_0(\Omega)$ be the solution of problem~\eqref{E:weak form} and $U\in\VV_0$ be the Galerkin approximation of $u$ satisfying~\eqref{E:disc prob}. Specifically, the main goal of this section is to define some computable quantities $\EE_\HH(U,\beta)$, for $\beta\in\HH$, so that
$$\|\nabla(u-U)\|_{L^2(\Omega)}\le C \left(\sum_{\beta\in\HH} \EE_\HH(U,\beta)^2\right)^\frac12,$$
for some constant $C>0$. 

Notice that the coercivity~\eqref{E:coercivity} of the bilinear form implies that
\begin{equation*}
 \|\nabla(u-U)\|_{L^2(\Omega)}^2\le \frac{1}{\gamma_1}B[u-U,u-U] = \frac{1}{\gamma_1} \langle\Res(U),u-U\rangle,
\end{equation*}
where the \emph{residual} $\Res$ of a function $V\in\VV_0$ is given by
\begin{equation*}
 \langle \Res(V), v\rangle := F(v)-B[V,v]=B[u-V,v], \qquad \text{for all $v\in 
H^1_0(\Omega)$}.
\end{equation*}
Thus, we have that
\begin{equation}\label{E:bound error for residual}
 \|\nabla(u-U)\|_{L^2(\Omega)}\le  \frac{1}{\gamma_1} \|\Res(U)\|_{H^{-1}(\Omega)},
\end{equation}
and therefore, we have to bound $\|\Res(U)\|_{H^{-1}(\Omega)}$.
Assuming that $\VV_0\subset C^1(\Omega)$, since $U\in\VV_0$, integration by parts yields
\begin{equation*}
 \langle \Res(U), v\rangle = \int_\Omega \underbrace{(f+\Div(\AAA\nabla U)- \bm{b} \cdot \nabla U - cU)}_{=:r(U)}v,\qquad\forall v\in H^1_0(\Omega).
\end{equation*}
Now, for each $v\in H^1_0(\Omega)$ we associate a discrete function $v_\QQ\in \VV_0$ given by
\begin{equation*}
 v_\QQ := \sum_{\beta\in\HH} c_\beta a_{\beta}\beta, \quad\text{where }\quad c_\beta := \begin{cases} \frac{\int_{\omega_\beta}v\beta}{\int_{\omega_\beta}\beta},&  \text{if }\beta_{|_{\partial \Omega}}\equiv 0,\\ 0,& \text{otherwise.}\end{cases}
\end{equation*}                                                                                                                                                         
Here, the coefficients $a_\beta$ are those given by~\eqref{E:partition of the unity in the hierarchical space} and $\omega_\beta=\supp\beta$ (cf.~\eqref{E:omega beta}). Taking into account that $\sum_{\beta\in\HH} a_\beta \beta\equiv 1$ on $\Omega$ and using that $U$ satisfies~\eqref{E:disc prob}, we have that
$$
 \langle \Res(U),v\rangle=\langle \Res(U),v-v_\QQ\rangle = \sum_{\beta\in\HH}a_\beta \langle \Res(U),(v-c_\beta)\beta\rangle = \sum_{\beta\in\HH}a_\beta\int_{\omega_\beta} r(U)(v-c_\beta)\beta.$$
 By H\"older's inequality and the weighted Poincar\'e type inequalities given in Theorems~\ref{T:Poincare inequality} and~\ref{T:Friedrichs}, it follows that
 \begin{align*}
 \langle \Res(U),v\rangle
 &\le \sum_{\beta\in\HH} a_\beta \|r(U)\|_{L^2(\omega_\beta,\beta)}\|v-c_\beta\|_{L^2(\omega_\beta,\beta)}\\
 &\le C_1\sum_{\beta\in\HH} a_\beta \|r(U)\|_{L^2(\omega_\beta,\beta)}\diam(\omega_\beta)\|\nabla v\|_{L^2(\omega_\beta,\beta)}\\
 &\le C_1\left(\sum_{\beta\in\HH}\|r(U)\|_{L^2(\omega_\beta,\beta)}^2\diam(\omega_\beta)^2 a_\beta\right)^\frac12 
\left(\sum_{\beta\in\HH}\|\nabla v\|_{L^2(\omega_\beta,\beta)}^2 a_\beta\right)^\frac12\\
&=C_1\left(\sum_{\beta\in\HH} \int_{\omega_\beta} |r(U)|^2\diam(\omega_\beta)^2 a_\beta \beta\right)^\frac12\|\nabla v\|_{L^2(\Omega)}. 
\end{align*}
where $C_1:=\max(C_F,\frac{1}{\pi})$ (cf.~\eqref{E:Friedrichs inequality}). In consequence,
$$\|\Res(U)\|_{H^{-1}(\Omega)}\le C_1 \left(\sum_{\beta\in\HH}a_\beta \diam(\omega_\beta)^2 \int_{\omega_\beta} |r(U)|^2   \beta \right)^\frac12.$$
Regarding~\eqref{E:bound error for residual} we finally obtain that
\begin{equation*}
 \|\nabla(u-U)\|_{L^2(\Omega)}\le \frac{C_1}{\gamma_1} \left(\sum_{\beta\in\HH}a_\beta \diam(\omega_\beta)^2 \int_{\omega_\beta} |r(U)|^2   \beta \right)^\frac12.
\end{equation*}
Let $\{\QQ_\ell\}_{\ell\in\NN_0}$ be the sequence of underlying Cartesian meshes associated to the different levels as explained in Section~\ref{S:discretization}. Without losing generality, we assume that $$\D\max_{\ell\in\NN_0}\frac{\D\max_{Q\in\QQ_\ell} \diam(Q)}{\D\min_{Q\in\QQ_\ell} \diam(Q)}<\infty,$$
where $\diam(Q)$ denotes the diameter of the cell $Q$. Now, we define the \emph{meshsize $h_\ell$ at level $\ell$} (corresponding to the Cartesian grid $\QQ_\ell$) by
\begin{equation}\label{E:meshsize}
 h_\ell:=\max_{Q\in\QQ_\ell} \diam(Q),\qquad\ell\in\NN_0.
\end{equation}
We also define
\begin{equation}\label{E:meshsize beta}
 h_\beta := h_\ell,
\end{equation}
where $\ell$ is such that $\beta\in\BB_\ell$; and notice that $h_\beta$ is equivalent to $\diam(\omega_\beta)$. 

Finally, for $V\in\Span\HH$ and $\beta\in\HH$, we define the \emph{local error indicator} $\EE_\HH(V,\beta)$ by
\begin{equation}\label{E:a posteriori error estimators}
 \EE_\HH(V,\beta) :=\sqrt{a_\beta}h_\beta\left(\int_{\omega_\beta} |r(V)|^2 \beta\right)^\frac12,
\end{equation}
and summarize we have just proved in the following result.

\begin{theorem}
 Let $u\in H^1_0(\Omega)$ be the solution of problem~\eqref{E:weak form} and $U\in\VV_0$ be the Galerkin approximation of $u$ satisfying~\eqref{E:disc prob}. Then, there exists a constant $C>0$ such that
$$\|\nabla(u-U)\|_{L^2(\Omega)}\le C \left(\sum_{\beta\in\HH} \EE_\HH(U,\beta)^2\right)^\frac12.$$
\end{theorem}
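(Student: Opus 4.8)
The plan is to follow the classical residual-based route, with the weighted Poincaré and Friedrichs inequalities (Theorems~\ref{T:Poincare inequality} and~\ref{T:Friedrichs}) playing the role that the elementwise Poincaré and trace inequalities play in standard finite elements, and with the strictly positive partition of unity~\eqref{E:partition of the unity in the hierarchical space} providing the localization. First I would invoke coercivity~\eqref{E:coercivity} to reduce the energy error to the dual norm of the residual, obtaining $\|\nabla(u-U)\|_{L^2(\Omega)}\le\gamma_1^{-1}\|\Res(U)\|_{H^{-1}(\Omega)}$ as in~\eqref{E:bound error for residual}. Using the assumption $\VV_0\subset C^1(\Omega)$, integration by parts rewrites $\langle\Res(U),v\rangle=\int_\Omega r(U)\,v$ with the strong residual $r(U)=f+\Div(\AAA\nabla U)-\bm b\cdot\nabla U-cU$, so that no interelement jump terms survive.

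Next I would exploit Galerkin orthogonality~\eqref{E:disc prob} to subtract from $v$ the weighted quasi-interpolant $v_\QQ=\sum_{\beta\in\HH}c_\beta a_\beta\beta$, where $c_\beta$ is the $\beta$-weighted mean of $v$ over $\omega_\beta$ for interior B-splines and $c_\beta=0$ for those B-splines not vanishing on $\partial\Omega$, and the $a_\beta>0$ are the partition-of-unity coefficients. Writing $v=v\cdot 1=\sum_{\beta\in\HH}a_\beta\beta\,v$ via~\eqref{E:partition of the unity in the hierarchical space} yields the localized identity $\langle\Res(U),v\rangle=\sum_{\beta\in\HH}a_\beta\int_{\omega_\beta}r(U)(v-c_\beta)\beta$. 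On each patch $\omega_\beta$ I apply Hölder's inequality in the weighted inner product $\langle\cdot,\cdot\rangle_{\omega_\beta,\beta}$ and then the appropriate Poincaré-type bound: Theorem~\ref{T:Poincare inequality} when $\beta|_{\partial\Omega}\equiv0$ (here $c_\beta$ is precisely the required weighted average), and Theorem~\ref{T:Friedrichs} when $\beta|_{\partial\Omega}\not\equiv0$ (legitimate since $v\in H^1_0(\Omega)$ vanishes on $\Gamma_\beta=\partial\Omega\cap\partial\omega_\beta$, and $c_\beta=0$), obtaining $\|v-c_\beta\|_{L^2(\omega_\beta,\beta)}\le C_1\diam(\omega_\beta)\|\nabla v\|_{L^2(\omega_\beta,\beta)}$ with $C_1=\max(C_F,1/\pi)$.

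A discrete Cauchy--Schwarz over $\beta\in\HH$ then separates the factor $\big(\sum_{\beta\in\HH}a_\beta\diam(\omega_\beta)^2\int_{\omega_\beta}|r(U)|^2\beta\big)^{1/2}$ from $\big(\sum_{\beta\in\HH}a_\beta\|\nabla v\|_{L^2(\omega_\beta,\beta)}^2\big)^{1/2}$; the second factor collapses \emph{exactly} to $\|\nabla v\|_{L^2(\Omega)}$ by using the partition of unity~\eqref{E:partition of the unity in the hierarchical space} a second time, so that no mesh-overlap constant is ever incurred. Taking the supremum over $v\in H^1_0(\Omega)$, replacing $\diam(\omega_\beta)$ by the equivalent $h_\beta$ of~\eqref{E:meshsize beta}, and recognizing the estimator~\eqref{E:a posteriori error estimators} then finishes the proof, with $C$ equal to $C_1/\gamma_1$ times the equivalence constant between $h_\beta$ and $\diam(\omega_\beta)$.

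The only genuinely delicate points, and where I would be careful, are: that the $C^1$-conformity assumption is what makes the interface jump terms disappear, so the analysis tacitly restricts knot multiplicities; that the two choices of $c_\beta$ match the hypotheses of the two weighted inequalities, in particular that $v\in H^1_0(\Omega)$ does vanish on $\Gamma_\beta$; and that the strict positivity $a_\beta>0$ in~\eqref{E:partition of the unity in the hierarchical space} is used twice, once to localize the residual and once to telescope the gradient sum, which is exactly what keeps the constant clean. Everything else is the routine Hölder and Cauchy--Schwarz bookkeeping already displayed just before the theorem statement.
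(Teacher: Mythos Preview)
Your proposal is correct and follows essentially the same route as the paper: coercivity reduces to the dual norm of the residual, the $C^1$-assumption kills jump terms, the partition of unity~\eqref{E:partition of the unity in the hierarchical space} together with Galerkin orthogonality localizes $\langle\Res(U),v\rangle$ to the patches $\omega_\beta$, the weighted Poincar\'e/Friedrichs inequalities control $\|v-c_\beta\|_{L^2(\omega_\beta,\beta)}$, and a discrete Cauchy--Schwarz followed by a second use of the partition of unity recombines the gradient terms into $\|\nabla v\|_{L^2(\Omega)}$. Your identification of $C_1=\max(C_F,1/\pi)$ and of the remaining delicate points (knot-multiplicity restriction, matching of $c_\beta$ to the hypotheses of the two inequalities, and the double use of $a_\beta>0$) is accurate and matches the paper exactly.
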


\section{Refinement for hierarchical spaces and estimator reduction}\label{S:refinement}

In this section we explain precisely how to perform the refinement of a hierachical mesh and we show that the error estimator defined in the previous section is reduced after mesh refinement in the sense of~\cite[Corollary 3.4]{CKNS08-optimality}, which is an important property in order to get results about the optimality of adaptive algorithms.

\subsection{Refinement of hierachical meshes}

We start with the following basic definition.

\begin{definition}[Enlargement]
 Let ${\bf\Omega}_n := \{\Omega_0,\Omega_1,\dots,\Omega_n\}$ and ${\bf\Omega}_{n+1}^*:= \{\Omega_0^*,\Omega_1^*,\dots,\Omega_n^*,\Omega_{n+1}^*\}$ be hierarchies of subdomains of $\Omega$ of depth (at most) $n$ and $n+1$, respectively. We say that ${\bf\Omega}_{n+1}^*$ is an \emph{enlargement} of ${\bf\Omega}_n$ if
 $$\Omega_\ell\subset\Omega_\ell^*,\qquad \ell=1,2,\dots,n.$$ 
 \end{definition}

In order to enlarge the current subdomains we have to select the regions in $\Omega$ where more ability of approximation is required. Such a choice can be done by selecting to \emph{refine} some active functions, as we explain in Section~\ref{S:aigm} below where we will consider a precise way of enlarging the hierarchy ${\bf\Omega}_n$.

If ${\bf\Omega}_{n+1}^*$ is an enlargement of ${\bf\Omega}_n$, we denote by $\HH^*$ the hierarchical basis (associated to the hierarchy ${\bf\Omega}_{n+1}^*$) as in Definition~\ref{D:hierachical basis}.  
From~\cite[Theorem 5.4]{BG15}) it follows that
 $$\Span\HH\subset \Span\HH^*,$$
 and thus, we say that $\HH^*$ is a \emph{refinement} of $\HH$. 

Finally, we denote by $\QQ^*$ the \emph{refined} 
hierarchical mesh given by 
$$\QQ^*:=\bigcup_{\ell = 0}^{n} \{Q\in\QQ_\ell \mid Q\subset \Omega_\ell^* 
\wedge Q\not\subset \Omega_{\ell+1}^*\}.$$ 

\subsection{Error estimator reduction}

Let $\{\QQ_\ell\}_{\ell\in\NN_0}$ be the sequence of underlying Cartesian meshes associated to the different levels as explained in Section~\ref{S:discretization} and let $\{h_\ell\}_{\ell\in\NN_0}$ be the sequence of meshsizes defined by~\eqref{E:meshsize}. In order to analyse the behaviour of the error estimator under refinement, we assume that the successive levels are obtained performing $q$-adic refinement in the tensor product meshes. More precisely, we state the following assumption.

\begin{assumption}\label{A:q adic refinement}
 There exists $q\in\NN$, $q\ge2$ such that
 $$h_{\ell+1}\le \frac1q h_\ell,\qquad \forall\,\ell\in\NN_0.$$
\end{assumption}

Let $\HH$ be the hierarchical basis associated to a hierarchy of 
subdomains of depth $n$,
${\bf\Omega}_n := \{\Omega_0,\Omega_1,\dots,\Omega_n\}$. Let $\{a_\beta\}_{\beta\in\HH}$ be the sequence of positive numbers as in~\eqref{E:partition of the unity in the hierarchical space} such that $\sum_{\beta\in\HH}a_\beta\beta(x) =1$, for $x\in\Omega$. For $V\in\Span\HH$ and $\beta\in\HH$, the local error indicator $\EE_\HH(V,\beta)$ defined in~\eqref{E:a posteriori error estimators} is given by
\begin{equation*}
 \EE_\HH(V,\beta) :=\sqrt{a_\beta}{h}_\beta\left(\int_{\omega_\beta} |r(V)|^2 \beta\right)^\frac12,
\end{equation*}
where $h_\beta$ is defined by~\eqref{E:meshsize beta}. Additionally, for $\NNN\subset \HH$ we define $\EE_\HH(V,\NNN)$ by
$$\EE_\HH(V,\NNN):=\left(\sum_{\beta\in\NNN}\EE_\HH^2(V,\beta)\right)^\frac12.$$
We remark that the \emph{global indicator} $\EE_{\HH}(V,\HH)$ can be computed as
$$\EE_{\HH}(V,\HH)=\left(\int_\Omega |r(V)|^2 H_\HH^2\right)^\frac12=\|r(V)\|_{L^2(\Omega,H_\HH^2)},$$
where the function $H_{\HH}^2\in\Span\HH$ is given by
\begin{equation}\label{E:H2 function}
 H_{\HH}^2:=\sum_{\beta\in\HH}a_\beta{h}_\beta^2\beta.
\end{equation}

The main result of this section is the following proposition.

\begin{proposition}[Estimator reduction]\label{P:estimator reduction 1}
Let $\{h_\ell\}_{\ell\in\NN_0}$ be the meshsizes defined in~\eqref{E:meshsize} and let Assumption~\ref{A:q adic refinement} be valid. Let $\HH$ be a hierarchical basis and let $\HH^*$ be a refinement of $\HH$. If $\RRR:=\HH\setminus\HH^*$ denotes the set of \emph{refined basis functions}, then
 \begin{equation*}
  \EE_{\HH^*}^2(V,\HH^*)\le \EE_\HH^2(V,\HH)-\lambda \EE_{\HH}^2(V,\RRR),\qquad\forall\,V\in\Span \HH.
 \end{equation*}
where $\lambda:=(1-\frac{1}{q^2})$.
\end{proposition}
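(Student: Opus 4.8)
The plan is to rewrite the global estimator as the $L^2$-norm of $r(V)$ against the weight function $H_{\HH}^2 = \sum_{\beta\in\HH} a_\beta h_\beta^2 \beta$, and likewise $\EE_{\HH^*}^2(V,\HH^*) = \int_\Omega |r(V)|^2 H_{\HH^*}^2$ with $H_{\HH^*}^2 = \sum_{\beta\in\HH^*} a_\beta^* h_\beta^2 \beta$. Since $r(V)$ is the same residual in both cases (it only depends on $V\in\Span\HH\subset\Span\HH^*$ and the data), the proposition reduces to a \emph{pointwise} comparison of weight functions on $\Omega$:
$$H_{\HH^*}^2(x) \le H_{\HH}^2(x) - \lambda \sum_{\beta\in\RRR} a_\beta h_\beta^2 \beta(x),\qquad x\in\Omega,$$
with $\lambda = 1-1/q^2$. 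Integrating this against $|r(V)|^2\ge 0$ then yields the claim. So the entire argument is about the two partition-of-unity-type expansions of a single spline function.

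First I would isolate the functions that are affected. Writing $\HH\setminus\RRR = \HH\cap\HH^*$ for the unrefined functions, I expect that for $\beta\in\HH\cap\HH^*$ the associated coefficient can only decrease, i.e. $a_\beta^*\le a_\beta$, because refining pushes "mass" of the partition of unity down to finer children; meanwhile each refined $\beta\in\RRR$ gets replaced by a non-negative combination $\sum_k c_k(\beta)\beta_k$ of its children (or descendants) via the two-scale relation~\eqref{E:two scale relation}, and the children live at level $\ell+1$ (or higher) so their meshsize satisfies $h_{\beta_k} = h_{\ell+1} \le \frac1q h_\ell = \frac1q h_\beta$ by Assumption~\ref{A:q adic refinement}. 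The key algebraic identity to extract from \cite{BG15} (the proof that $\sum a_\beta^*\beta^* = 1$) is how the coefficients $a_\beta^*$ of $\HH^*$ relate to those of $\HH$: each new active function $\beta_k\in\HH^*\setminus\HH$ collects contributions $a_\beta c_k(\beta)$ from its refined parents $\beta\in\RRR$, possibly together with contributions inherited through further levels, and the unrefined coefficients are unchanged, $a_\beta^* = a_\beta$ for $\beta\in\HH\cap\HH^*$. Granting this, I would split
$$H_{\HH^*}^2 = \sum_{\beta\in\HH\cap\HH^*} a_\beta h_\beta^2\beta + \sum_{\beta_k\in\HH^*\setminus\HH} a_{\beta_k}^* h_{\beta_k}^2 \beta_k,$$
and bound the second sum: since $h_{\beta_k}^2 \le \frac{1}{q^2} h_\beta^2$ whenever $\beta_k$ is a child of $\beta\in\RRR$ (and $h$ only decreases at deeper levels), using $\beta = \sum_k c_k(\beta)\beta_k$ gives
$$\sum_{\beta_k\in\HH^*\setminus\HH} a_{\beta_k}^* h_{\beta_k}^2\beta_k \le \frac{1}{q^2}\sum_{\beta\in\RRR} a_\beta h_\beta^2 \sum_k c_k(\beta)\beta_k = \frac{1}{q^2}\sum_{\beta\in\RRR} a_\beta h_\beta^2\beta.$$
Adding $\sum_{\beta\in\HH\cap\HH^*} a_\beta h_\beta^2\beta = H_{\HH}^2 - \sum_{\beta\in\RRR}a_\beta h_\beta^2\beta$ yields exactly the pointwise bound with $\lambda = 1-1/q^2$.

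The main obstacle is the bookkeeping of the coefficients $a_\beta^*$ under multi-level refinement: a refined function of level $\ell$ may need to be expressed not just via its immediate children but via descendants across several levels (since $\HH^*$ has depth $n+1$ and the enlargement can act on many $\Omega_\ell^*$ at once), and one must be sure that (i) the unrefined coefficients stay put, $a_\beta^* = a_\beta$, and (ii) no function in $\HH^*\setminus\HH$ receives contributions from two different refined ancestors in an incompatible way — this is where local linear independence of B-splines and the precise recursive structure of Definition~\ref{D:hierachical basis} are needed. One must also handle the meshsize inequality cleanly when a refined function's "replacement" lives more than one level down: iterating $h_{\ell+k}\le q^{-k} h_\ell$ still gives a factor $\le q^{-2}$, so the bound is not lost, but the induction on the depth of refinement (peeling off one enlarged level $\Omega_{\ell+1}\subsetneq\Omega_{\ell+1}^*$ at a time, as in the construction of $\HH^*$) should be spelled out. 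Everything else — the reduction to a pointwise weight comparison, and the final integration against $|r(V)|^2$ — is routine.
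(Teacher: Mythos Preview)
Your reduction to the pointwise inequality $H_{\HH^*}^2 \le H_\HH^2 - \lambda\sum_{\beta\in\RRR}a_\beta h_\beta^2\beta$, followed by integration against $|r(V)|^2\ge 0$, is exactly the paper's route: the pointwise bound is isolated as Lemma~\ref{L:aux estimator reduction}, and the proposition then follows in two lines.

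There is, however, a slip in your coefficient bookkeeping. You assert first $a_\beta^*\le a_\beta$ and then $a_\beta^* = a_\beta$ for the surviving $\beta\in\HH\cap\HH^*$; neither holds in general. A surviving function of level $k>0$ may itself appear, with a positive coefficient, in the $\HH^*$-decomposition of some refined $\beta'\in\RRR_\ell$ with $\ell<k$, so that in fact $a_\beta^*\ge a_\beta$ with possible strict inequality. This spoils your split of $H_{\HH^*}^2$: the piece $\sum_{\beta\in\HH\cap\HH^*}a_\beta h_\beta^2\beta$ is too small, and the discrepancy points the wrong way for the upper bound you want.

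The paper avoids isolating $a_\beta^*$ altogether. It groups the refined functions by level, $\beta_\ell:=\sum_{\beta\in\RRR_\ell}a_\beta\beta$, uses Corollary~\ref{C:decomposition} (ultimately~\cite[Lemma~5.4]{BG15}) to write each $\beta_\ell$ as a combination of functions in $\HH^*\cap\bigcup_{k\ge\ell+1}\BB_k$, and substitutes into $1=\sum_{\beta\in\HH\setminus\RRR}a_\beta\beta+\sum_\ell\beta_\ell$ to obtain an alternative $\HH^*$-expansion of the constant $1$. By linear independence of $\HH^*$ this coincides with the $a_\beta^*$-expansion after collecting repeated terms; multiplying each occurrence of $\beta$ by its own $h_\beta^2$ therefore preserves the identity. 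Only then does one estimate: every $\beta$ in the $\beta_\ell$-block has level $\ge\ell+1$, hence $h_\beta\le h_{\ell+1}\le q^{-1}h_\ell$ by Assumption~\ref{A:q adic refinement}, \emph{whether or not} $\beta$ already belonged to $\HH$. This single step handles both the extra weight that lands on surviving functions and the multi-level descent you flag as the main obstacle, with no induction on depth required.
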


This result is a consequence of Lemma~\ref{L:aux estimator reduction} stated below. In the proof of that lemma we will use the following result which is a consequence of the fact that deactivated functions of level $\ell$ (in $\HH^*$) can be written in terms of active functions of higher levels (in $\HH^*$). More precisely, from~\cite[Lemma 5.4]{BG15} we have that
\begin{equation}\label{E:decomposition}
 \{\beta_\ell\in\BB_\ell\,|\,\supp\beta_\ell\subset\Omega_{\ell+1}^*\}\subset \Span\left(\HH^*\cap\bigcup_{k=\ell+1}^{n}\BB_k\right),\qquad\ell=0,1,\dots,n-1.
\end{equation}

\begin{corollary}\label{C:decomposition}
Let $\HH$ be a hierarchical basis and let $\HH^*$ be a refinement of $\HH$. If $\RRR:=\HH\setminus\HH^*$ denotes the set of refined functions, then
$$\RRR_\ell:=\RRR\cap\BB_\ell\subset \Span\left(\HH^*\cap\bigcup_{k=\ell+1}^{n}\BB_k\right),\qquad\ell = 0,1,\dots,n-1.$$
\end{corollary}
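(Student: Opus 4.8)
The plan is to deduce Corollary~\ref{C:decomposition} directly from the decomposition property~\eqref{E:decomposition}, which is quoted from~\cite[Lemma 5.4]{BG15}. The key observation is simply that every refined function $\beta\in\RRR_\ell=\RRR\cap\BB_\ell$ is, in the refined hierarchy ${\bf\Omega}_{n+1}^*$, a deactivated function of level $\ell$; once this is established, membership of $\beta$ in the span on the right-hand side follows word for word from~\eqref{E:decomposition}.

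The main step, then, is to verify the claim that $\RRR_\ell\subset\{\beta_\ell\in\BB_\ell\mid\supp\beta_\ell\subset\Omega_{\ell+1}^*\}$. I would argue as follows. Let $\beta\in\RRR_\ell$, so $\beta\in\HH\cap\BB_\ell$ (i.e.\ $\beta$ is an active function of level $\ell$ in the coarse hierarchy) but $\beta\notin\HH^*$. By construction of the hierarchical basis (Definition~\ref{D:hierachical basis}) applied to ${\bf\Omega}_{n+1}^*$, a level-$\ell$ B-spline $\beta\in\BB_\ell$ that belongs to $\HH_\ell^*$ fails to be passed on to $\HH_{\ell+1}^*$ precisely when $\supp\beta\subset\Omega_{\ell+1}^*$, in which case $\beta$ is replaced by its children. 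So it suffices to check two things: first, that $\beta\in\HH_\ell^*$ (so that the algorithm actually "sees" $\beta$ at step $\ell$), and second, that $\supp\beta\subset\Omega_{\ell+1}^*$. For the first point, since $\HH^*$ is a refinement of $\HH$ and the recursive construction of $\HH^*$ runs level by level, an active coarse function of level $\ell$ is in $\HH_\ell^*$ (it was either in $\BB_0$, or it was produced as a child at an earlier level, and in either case it survives in $\HH_k^*$ for $k\le\ell$ as long as no earlier subdomain $\Omega_k^*$, $k\le\ell$, already contained its support — but $\Omega_k\subset\Omega_k^*$ only enlarges the subdomains, and $\beta$ being active in $\HH$ means its support was \emph{not} contained in $\Omega_{k}$ for $k\le\ell$; one must argue the enlargement does not prematurely remove it, which is exactly the content of $\Span\HH\subset\Span\HH^*$ together with the level-structure of the algorithm). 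For the second point, $\beta\notin\HH^*=\HH_{n}^*$ while $\beta\in\HH_\ell^*$ forces $\beta\in\HH_\ell^*\setminus\HH_{\ell+1}^*$, i.e.\ $\beta$ is a deactivated function of level $\ell$ in the starred hierarchy, which by Definition~\ref{D:hierachical basis} means exactly $\supp\beta\subset\Omega_{\ell+1}^*$.

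Having placed $\beta$ in the set on the left of~\eqref{E:decomposition}, the conclusion $\beta\in\Span\bigl(\HH^*\cap\bigcup_{k=\ell+1}^{n}\BB_k\bigr)$ is immediate from~\eqref{E:decomposition} itself, and since this holds for every $\beta\in\RRR_\ell$ and the right-hand side is a linear space, we get $\RRR_\ell\subset\Span\bigl(\HH^*\cap\bigcup_{k=\ell+1}^{n}\BB_k\bigr)$ for each $\ell=0,1,\dots,n-1$, as claimed.

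The step I expect to be the only real obstacle is the bookkeeping in the first bullet above, namely confirming that an active function of level $\ell$ in the coarse basis $\HH$ is indeed still present in the intermediate set $\HH_\ell^*$ of the refined construction — one has to track how the level-by-level algorithm interacts with the enlargement $\Omega_k\subset\Omega_k^*$. I would handle this by invoking the structural results of~\cite{BG15} (in particular that $\HH^*$ is a genuine refinement, $\Span\HH\subset\Span\HH^*$, obtained by the same recursive scheme on a finer hierarchy) rather than re-deriving it; everything else is a direct citation of~\eqref{E:decomposition} and the definitions, so the corollary really is a short formal consequence.
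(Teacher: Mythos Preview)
Your proposal is correct and follows essentially the same route as the paper's proof. The paper is simply much terser: it observes that $\beta_\ell\in\HH\cap\BB_\ell$ gives $\supp\beta_\ell\subset\Omega_\ell\subset\Omega_\ell^*$, then asserts in one line that $\beta_\ell\notin\HH^*$ forces $\supp\beta_\ell\subset\Omega_{\ell+1}^*$, and finishes by citing~\eqref{E:decomposition} --- exactly your outline, with the bookkeeping step you single out (that $\beta_\ell$ actually belongs to $\HH_\ell^*$, hence is deactivated precisely at step $\ell$ in the refined algorithm) left implicit rather than spelled out.
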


\begin{proof}
 Let $\beta_\ell\in\RRR_\ell$ for some $\ell = 0,1,\dots,n-1$. Since $\beta_\ell\in\HH\cap\BB_\ell$, $\supp\beta_\ell\subset\Omega_\ell\subset\Omega_\ell^*$. Thus, we have that $\supp\beta_\ell\subset\Omega_{\ell+1}^*$ due to $\beta_\ell\not\in\HH^*$. Finally,~\eqref{E:decomposition} implies that $\beta_\ell\in\Span\left(\HH^*\cap\bigcup_{k=\ell+1}^{n}\BB_k\right)$.
\end{proof}

\begin{lemma}\label{L:aux estimator reduction}
Let $\{h_\ell\}_{\ell\in\NN_0}$ be the meshsizes defined in~\eqref{E:meshsize} and let Assumption~\ref{A:q adic refinement} be valid. Let $\HH$ be a hierarchical basis and let $\HH^*$ be a refinement of $\HH$. If $\RRR:=\HH\setminus\HH^*$ denotes the set of refined functions, then
 \begin{equation*}
  H_{\HH^*}^2(x)\le H_\HH^2(x) -\left(1-\frac{1}{q^2}\right)\sum_{\beta\in\RRR} a_\beta {h}_\beta^2 \beta(x),\qquad \forall\,x\in\Omega,
 \end{equation*}
where $H_\HH$ and $H_{\HH^*}$ are defined as in~\eqref{E:H2 function}, and $q\ge 2$ is the constant appearing in Assumption~\ref{A:q adic refinement}.
\end{lemma}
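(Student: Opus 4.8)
The plan is to track, level by level, how the weighted sum $H_{\HH}^2 = \sum_{\beta\in\HH} a_\beta h_\beta^2 \beta$ changes when we pass from $\HH$ to $\HH^*$. The key structural fact, furnished by Corollary~\ref{C:decomposition}, is that every refined function $\beta_\ell \in \RRR_\ell := \RRR\cap\BB_\ell$ can be rewritten as a nonnegative combination of active functions of $\HH^*$ of strictly higher level. Iterating the two-scale relation~\eqref{E:two scale relation}, I would write each such $\beta_\ell$ as $\beta_\ell = \sum_{k>\ell} \sum_{\beta_k \in \HH^*\cap\BB_k} d_{k}(\beta_\ell)\,\beta_k$ with $d_k(\beta_\ell)\ge 0$, where the expansion only involves finitely many levels up to $n$. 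This expansion is consistent with the partition-of-unity weights: since $\sum_{\beta\in\HH} a_\beta\beta \equiv 1 \equiv \sum_{\beta\in\HH^*} a_\beta\beta$ on $\Omega$ and both $\HH$ and $\HH^*$ share the non-refined functions $\HH\cap\HH^*$ (with the same weights, by the uniqueness statement behind~\cite[Theorem 5.2]{BG15}), the weights of the new functions must redistribute exactly the mass of the refined ones: $a_{\beta_k} \beta_k$ contributions coming from $\HH^*$ account precisely for $\sum_{\ell<k}\sum_{\beta_\ell\in\RRR_\ell} a_{\beta_\ell} d_k(\beta_\ell)\beta_k$ when restricted to the newly introduced functions.

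With this in hand I would compute the pointwise difference directly:
\begin{align*}
 H_\HH^2(x) - H_{\HH^*}^2(x)
 &= \sum_{\beta\in\RRR} a_\beta h_\beta^2 \beta(x) - \sum_{\gamma\in\HH^*\setminus\HH} a_\gamma h_\gamma^2 \gamma(x)\\
 &= \sum_{\ell} \sum_{\beta_\ell\in\RRR_\ell} a_{\beta_\ell}\Big( h_\ell^2\,\beta_\ell(x) - \sum_{k>\ell}\sum_{\beta_k} d_k(\beta_\ell)\, h_k^2\, \beta_k(x)\Big).
\end{align*}
Now, for each fixed refined $\beta_\ell$, expand $h_\ell^2\,\beta_\ell(x) = h_\ell^2 \sum_{k>\ell}\sum_{\beta_k} d_k(\beta_\ell)\beta_k(x)$ using the same expansion, and subtract termwise. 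The inner bracket becomes $\sum_{k>\ell}\sum_{\beta_k} d_k(\beta_\ell)\,(h_\ell^2 - h_k^2)\,\beta_k(x)$. By Assumption~\ref{A:q adic refinement}, $h_k \le q^{-(k-\ell)} h_\ell \le q^{-1} h_\ell$ for every $k>\ell$, so $h_\ell^2 - h_k^2 \ge (1 - q^{-2}) h_\ell^2$. Since $d_k(\beta_\ell)\ge 0$, $\beta_k(x)\ge 0$, and $\sum_{k>\ell}\sum_{\beta_k} d_k(\beta_\ell)\beta_k(x) = \beta_\ell(x)$, this yields the bracket is $\ge (1-q^{-2})\,h_\ell^2\,\beta_\ell(x) = (1-q^{-2})\,h_{\beta_\ell}^2\,\beta_\ell(x)$. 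Multiplying by $a_{\beta_\ell}\ge 0$ and summing over all refined functions gives exactly $H_\HH^2(x) - H_{\HH^*}^2(x) \ge (1-q^{-2})\sum_{\beta\in\RRR} a_\beta h_\beta^2 \beta(x)$, which is the claim.

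The main obstacle I anticipate is the bookkeeping in the second paragraph: justifying rigorously that the weights $a_\gamma$ of the functions in $\HH^*\setminus\HH$ decompose exactly as $\sum_{\ell}\sum_{\beta_\ell\in\RRR_\ell} a_{\beta_\ell} d_k(\beta_\ell)$ relative to the refined functions' expansions. This requires that the partition-of-unity coefficients are \emph{intrinsically determined} by the basis (not merely one admissible choice), and that the non-refined functions carry over unchanged; both should follow from the construction in~\cite{BG15}, but care is needed because a function $\beta_k\in\HH^*\cap\BB_k$ could in principle receive contributions from several refined ancestors across different levels, and one must confirm the telescoping over levels is consistent. Once that algebraic identity is nailed down, the inequality itself is just the elementary estimate $h_\ell^2 - h_k^2 \ge (1-q^{-2})h_\ell^2$ applied under a nonnegative sum, which is routine.
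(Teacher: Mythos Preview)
Your proposal is correct and follows essentially the same route as the paper: expand the refined functions in terms of $\HH^*$-functions of strictly higher level via Corollary~\ref{C:decomposition} (with nonnegative coefficients from iterating~\eqref{E:two scale relation}), match the partition-of-unity weights by linear independence of $\HH^*$, and then apply $h_k^2\le q^{-2}h_\ell^2$ for $k>\ell$ under the nonnegative sum. The paper streamlines your bookkeeping worry by first grouping all refined functions of a given level into a single function $\beta_\ell:=\sum_{\beta\in\RRR_\ell}a_\beta\beta$ and comparing the resulting alternative partition of unity with $\sum_{\beta\in\HH^*}a_\beta^*\beta\equiv 1$; by linear independence this immediately yields that the surviving functions keep their weights and the new weights aggregate the expansion coefficients, which is exactly the identity you flagged as the main obstacle.
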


\begin{proof}
 Notice that
 $$
  1=\sum_{\beta\in\HH}a_\beta\beta(x)= \sum_{\beta\in\HH\setminus\RRR}a_\beta\beta(x) + \sum_{\ell=0}^{n-1}\sum_{\beta\in\RRR_\ell}a_\beta\beta(x),\qquad\forall\,x\in\Omega.$$
  By Corollary~\ref{C:decomposition}, we have that $\beta_\ell:=\sum_{\beta\in\RRR_\ell}a_\beta\beta\in\Span\left(\HH^*\cap\bigcup_{k=\ell+1}^{n}\BB_k\right)$ and therefore,
  $$\beta_\ell = \sum_{\beta\in\HH^*\cap\bigcup_{k=\ell+1}^n \BB_k}c_{\beta,\ell} \beta,$$
  for some constants $c_{\beta,\ell}$.
  Then, 
  \begin{equation}\label{E:alternative partition of the unity}
  \sum_{\beta\in\HH\setminus\RRR}a_\beta\beta(x) + \sum_{\ell=0}^{n-1}\sum_{\beta\in\HH^*\cap\bigcup_{k=\ell+1}^n \BB_k}c_{\beta,\ell} \beta(x)=1,\qquad\forall\,x\in\Omega. 
  \end{equation}
  Notice that the last equation gives a partition of unity with functions $\HH^*$. Let $\{a_\beta^*\}_{\beta\in{\HH}^*}$ be the sequence of positive numbers such that 
  \begin{equation}\label{E:partition of unity in Hstar}
   \sum_{\beta\in{\HH}^*} a_\beta^*\beta(x)= 1,\qquad\forall\,x\in\Omega.
  \end{equation}
  Notice that the difference between~\eqref{E:alternative partition of the unity} and~\eqref{E:partition of unity in Hstar} is that in the former some basis functions appear more than once. Finally, taking into account Assumption~\ref{A:q adic refinement} we have that
  \begin{align*}
   H_{\HH^*}^2&=\sum_{\beta\in\HH^*}a_\beta^*{h}_\beta^2 \beta=\sum_{\beta\in\HH\setminus\RRR}a_\beta{h}_\beta^2 \beta + \sum_{\ell=0}^{n-1}\sum_{\beta\in\HH^*\cap\bigcup_{k=\ell+1}^n \BB_k}{h}_\beta^2 c_{\beta,\ell} \beta\le \sum_{\beta\in\HH\setminus\RRR}a_\beta{h}_\beta^2 \beta + \sum_{\ell=0}^{n-1}h_{\ell+1}^2\beta_\ell\\
   &\le \sum_{\beta\in\HH\setminus\RRR}a_\beta{h}_\beta^2 \beta + \frac{1}{q^2}\sum_{\ell=0}^{n-1}h_\ell^2\beta_\ell=\sum_{\beta\in\HH\setminus\RRR}a_\beta{h}_\beta^2 \beta + \frac{1}{q^2}\sum_{\beta\in\RRR}a_\beta{h}_\beta^2 \beta = H_\HH^2 -\left(1-\frac{1}{q^2}\right)\sum_{\beta\in\RRR} a_\beta {h}_\beta^2 \beta,
  \end{align*}
which concludes the proof.
\end{proof}

We finish this section with the proof of the estimator reduction property.

\begin{proof}[Proof of Proposition~\ref{P:estimator reduction 1}]
Let $V\in\Span\HH$ and let $\lambda:=(1-\frac{1}{q^2})$. By Lemma~\ref{L:aux estimator reduction}, we have that
 \begin{align*}
   \EE_{\HH^*}^2(V,\HH^*)&=\int_\Omega |r(V)|^2H_{\HH^*}^2\le \int_\Omega |r(V)|^2\left(H_\HH^2-\lambda\sum_{\beta\in\RRR} a_\beta{h}_\beta^2 \beta\right)\\
   & = \int_\Omega |r(V)|^2H_\HH^2 -\lambda\sum_{\beta\in\RRR} \int_\Omega |r(V)|^2 a_\beta{h}_\beta^2 \beta\\
   &=\EE_\HH^2(V,\HH)-\lambda\sum_{\beta\in\RRR} \EE_\HH^2(V,\beta)\\
   &=\EE_\HH^2(V,\HH)-\lambda\EE_{\HH}^2(V,\RRR).
 \end{align*}
\end{proof}

\section{Adaptive loop and numerical examples} \label{S:aigm}

In this section we propose an adaptive algorithm guided by the a posteriori error estimators defined in Section~\ref{S:estimators}. Additionally, we show the performance of such adaptive procedure in practice through several numerical examples. 

The adaptive loop that we consider is quite standard and consists of the following modules:

\begin{equation}\label{E:steps}
 \textsc{SOLVE} \quad \longrightarrow \quad
\textsc{ESTIMATE} \quad \longrightarrow \quad
\textsc{MARK} \quad \longrightarrow \quad
\textsc{REFINE}.
\end{equation}

We start with a tensor product mesh and the corresponding spline space, regarded as the current hierarchical mesh $\QQ$ and hierarchical space $\VV(\QQ)=\Span\HH$, respectively. We perform the steps in~\eqref{E:steps} in order to get an adaptively refined mesh $\QQ^*$ and its corresponding hierachical space $\VV(\QQ^*)=\Span\HH^*$. Next, we consider $\QQ^*$ and $\HH^*$ as the current hierachical mesh and basis, respectively, and perform the steps in~\eqref{E:steps}, and so on. We now briefly describe the modules of the adaptive loop.
\begin{itemize}
 \item \textsc{SOLVE}: Compute the solution $U$ of the discrete problem~\eqref{E:disc prob} in the current hierachical space $\VV(\QQ)=\Span \HH$.
\item \textsc{ESTIMATE}: Use the current discrete solution $U$ to compute the \emph{a posteriori error
estimators} $\EE_{\beta}:=\EE_\HH(U,\beta)$ defined in~\eqref{E:a posteriori error estimators}, for each $\beta\in\HH$.
\item \textsc{MARK}: Use the a posteriori error estimators $\{\EE_\beta\}_{\beta\in\HH}$ to compute the set of \emph{marked functions} $\MM\subset \HH$ using the \emph{maximum strategy} with parameter $\theta = 0.5$, that is, $\MM\subset\HH$ consists of the basis functions $\beta\in \HH$ such that
\begin{equation*}
 \EE_\beta\ge \theta \max_{\beta'\in\HH}\EE_{\beta'}.
\end{equation*}

\item \textsc{REFINE}: Use the set of marked functions $\MM$ to enlarge the current hierarchy of subdomains ${\bf\Omega}_{n} := 
\{\Omega_0,\Omega_1,\dots,\Omega_n\}$ as follows: 

Let $\MM_\ell:=\MM\cap\BB_\ell$, for $\ell = 0,1,\dots,n-1$. Now, we define the hierarchy of subdomains ${\bf\Omega}^*_{n+1} := 
\{\Omega_0^*,\Omega_1^*,\dots,\Omega_n^*,\Omega_{n+1}^*\}$ of depth at most $n+1$, by
\begin{align}\label{E:marking basis functions}
\begin{cases}\Omega_0^* &:=\Omega_0,\\
\Omega_{\ell}^* &:=\Omega_{\ell}\cup \D\bigcup_{\beta\in\MM_{\ell-1}}\supp 
\beta,\qquad \ell = 1,2,\dots,n,\\
\Omega_{n+1}^*&:= \emptyset.
\end{cases}
\end{align}
If $\HH^*$ is the hierarchical basis associated to ${\bf\Omega}^*_{n+1}$, we notice that $\MM\subset\HH\setminus\HH^*$; in other words, at least the functions in $\MM$ have been refined and removed from the hierarchical basis $\HH$. 
\end{itemize}

Now, we present some numerical tests to show the performance of the proposed algorithm. In particular, we study the decay of the energy error in terms of degrees of freedom (DOFs) in each example, and analyse the rates of convergence. The implementation of the adaptive procedure was done using the data structure and algorithms introduced in~\cite{GV16}.

We consider the problem 
\begin{equation}\label{E:poisson}
\left\{
\begin{aligned}
-\Delta u &= f\qquad & &
\text{in }\Omega\\
u&= g\qquad & &\text{on }\partial \Omega,
\end{aligned}
\right.
\end{equation}
giving in each particular example the definition of the domain $\Omega$ and the problem data $f$ and $g$.


\begin{example}[Regular solution in the unit square]\label{Ex:regular solution}
We consider $\Omega=[0,1]\times [0,1]$ and the problem data $f$ and $g$ in~\eqref{E:poisson} are chosen such that the exact solution $u$ is given by $u(x,y) = e^{-100((x-\frac12)^2+(y-\frac12)^2)}$. In Figure~\ref{F:example regular solution meshes} we plot the exact solution, some hierarchical meshes and the decay of the energy error vs. degrees of freddom for different spline degrees. As expected, both tensor product meshes and hierarchical meshes reach optimal orders of convergence, but notice that in all cases, the curves corresponding to the adaptive strategy are meaningfully by below. For example, for attaining an energy error of around $3.10^{-4}$ using biquadratics, the global refinement requires $66564$ DOFs whereas the adaptive strategy only needs $14548$ DOFs.

\begin{figure}[H!tbp]
\begin{center}
\includegraphics[width=.3\textwidth]{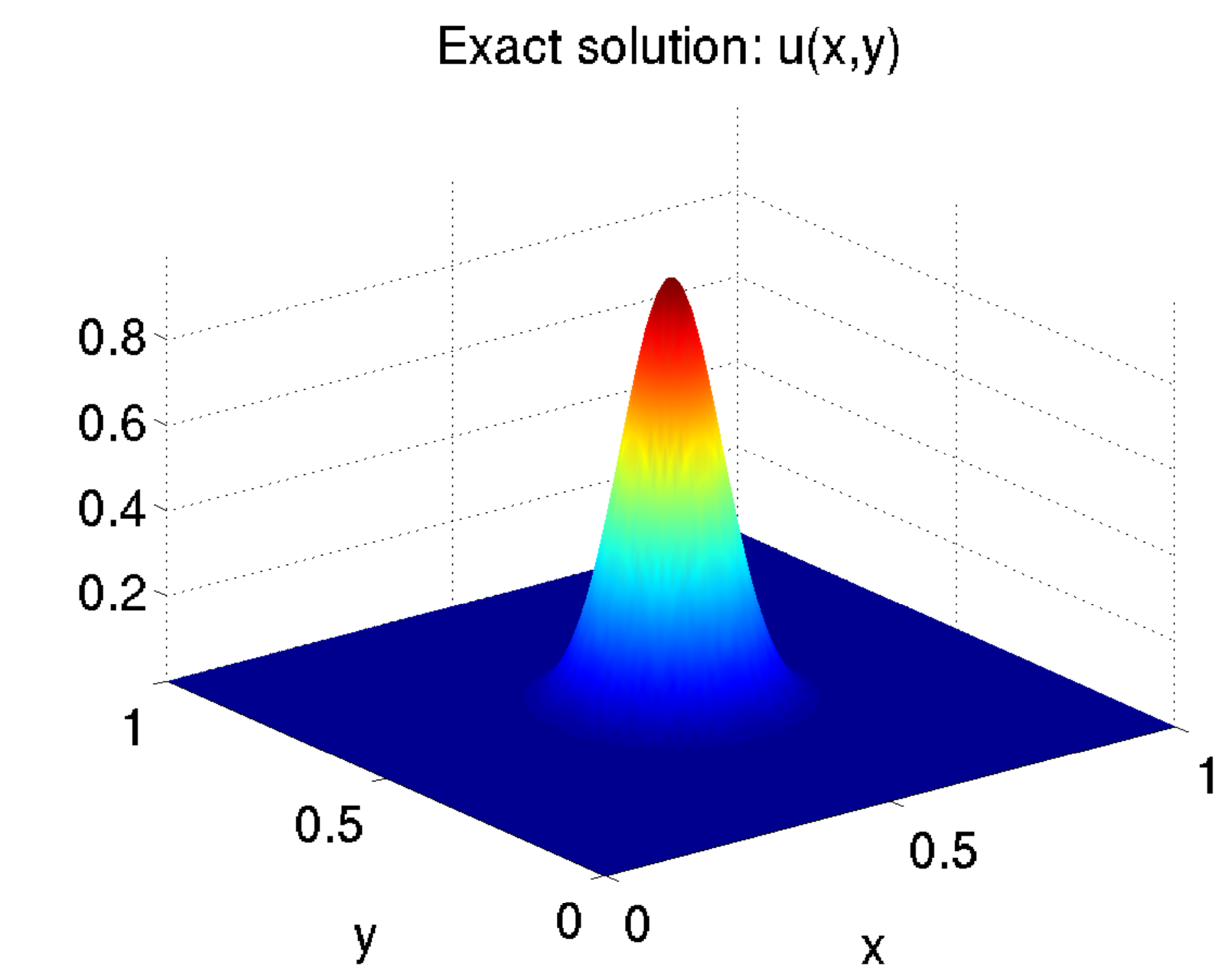}
\includegraphics[width=.2\textwidth]{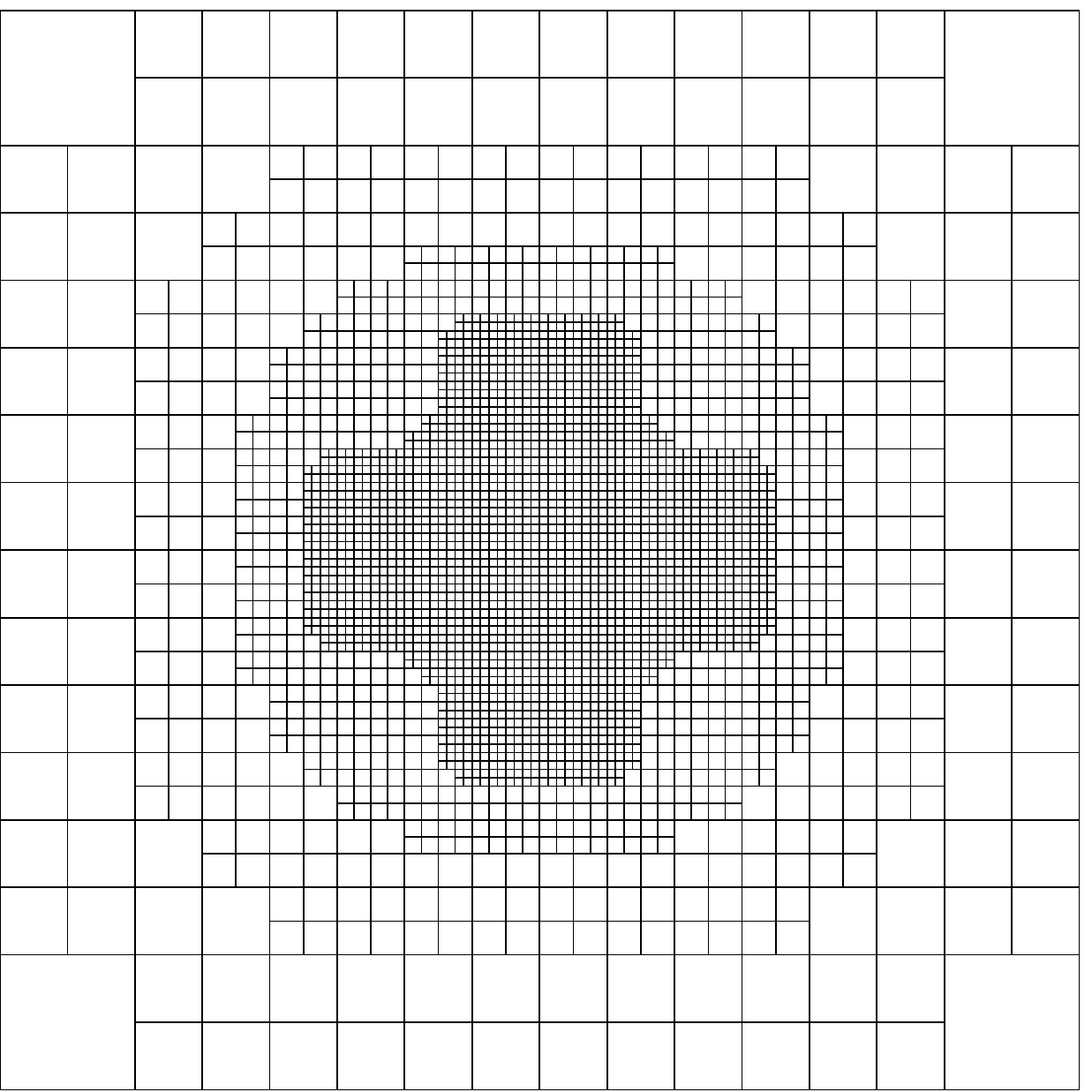}
\includegraphics[width=.2\textwidth]{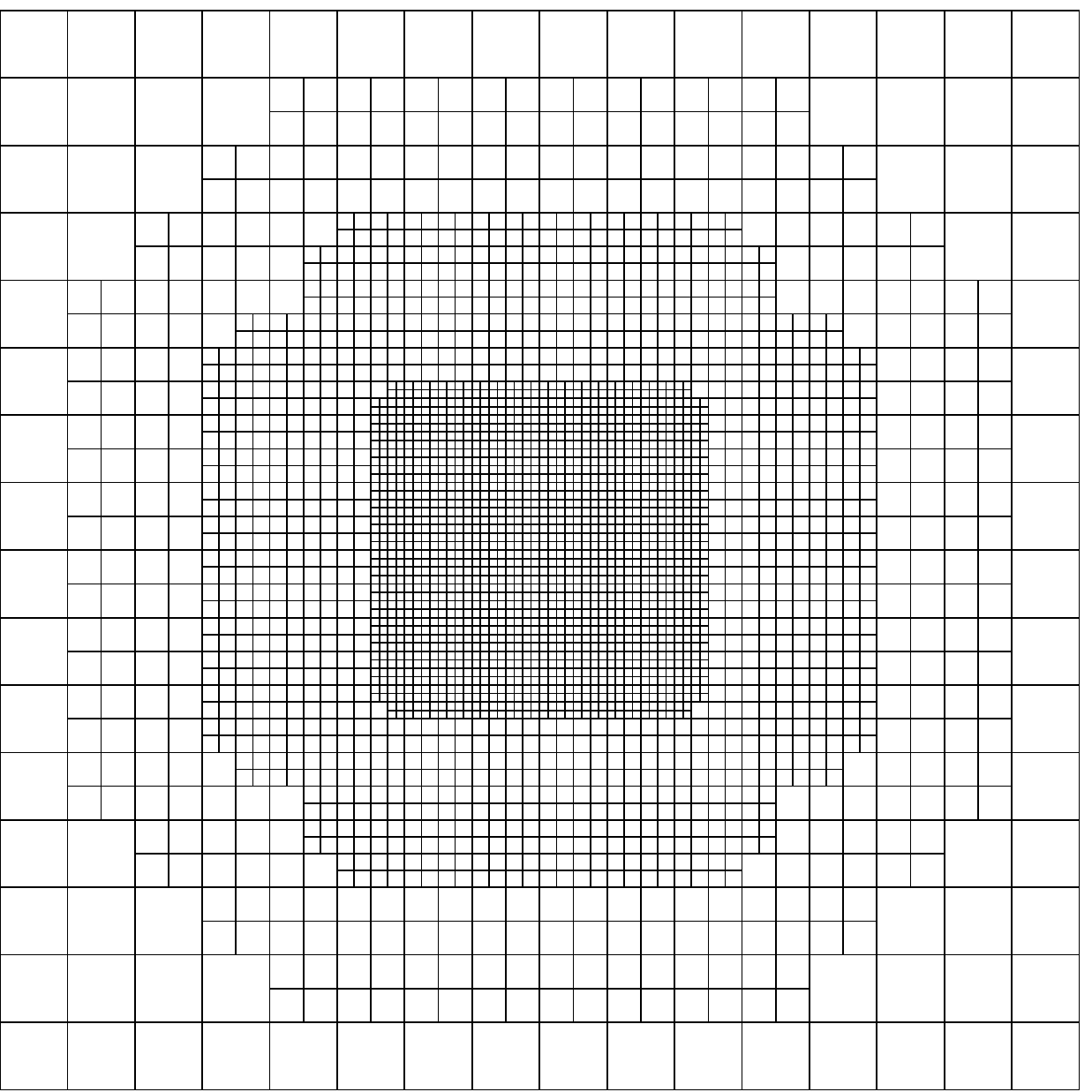} 
\includegraphics[width=.2\textwidth]{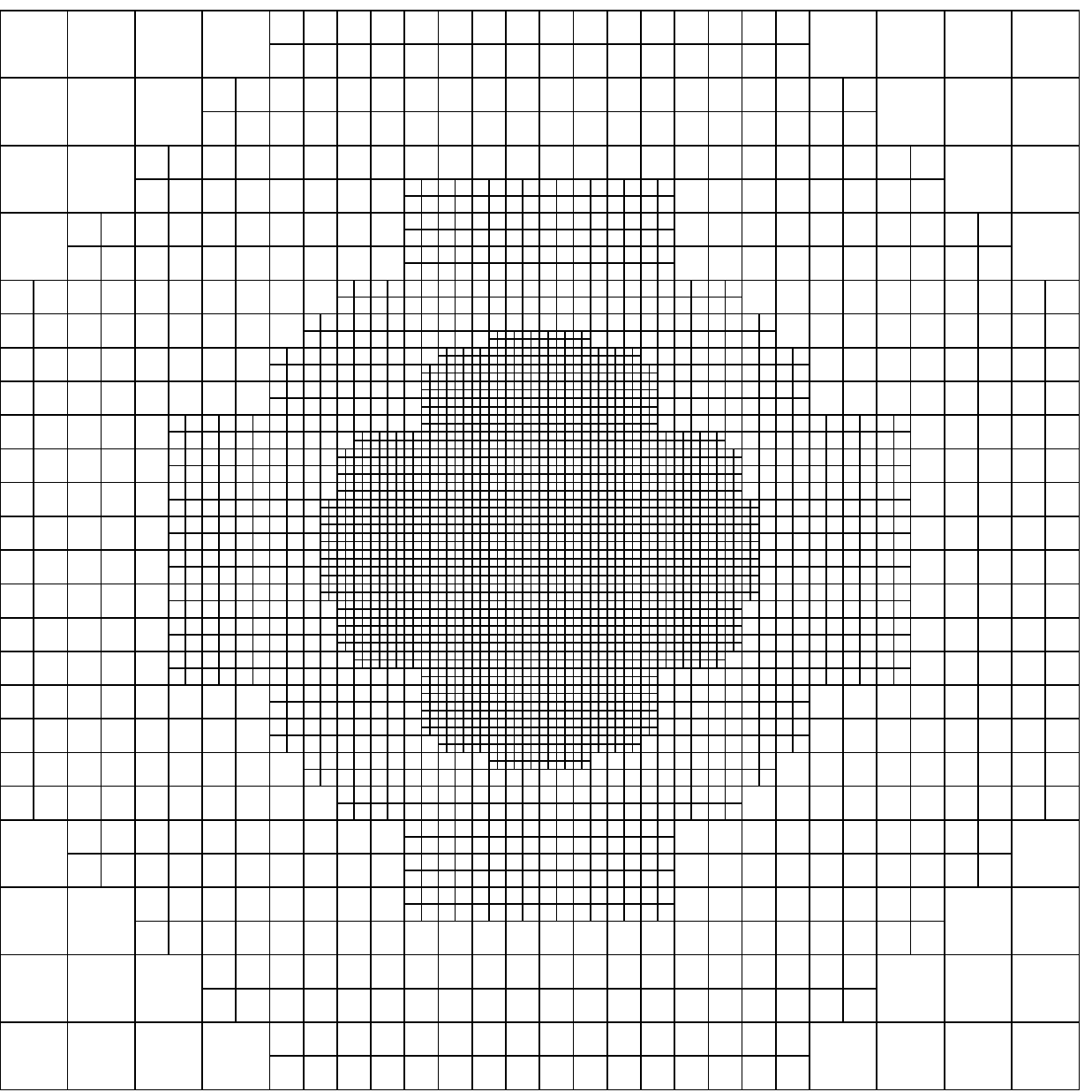}
\medskip

\includegraphics[width=.3\textwidth]{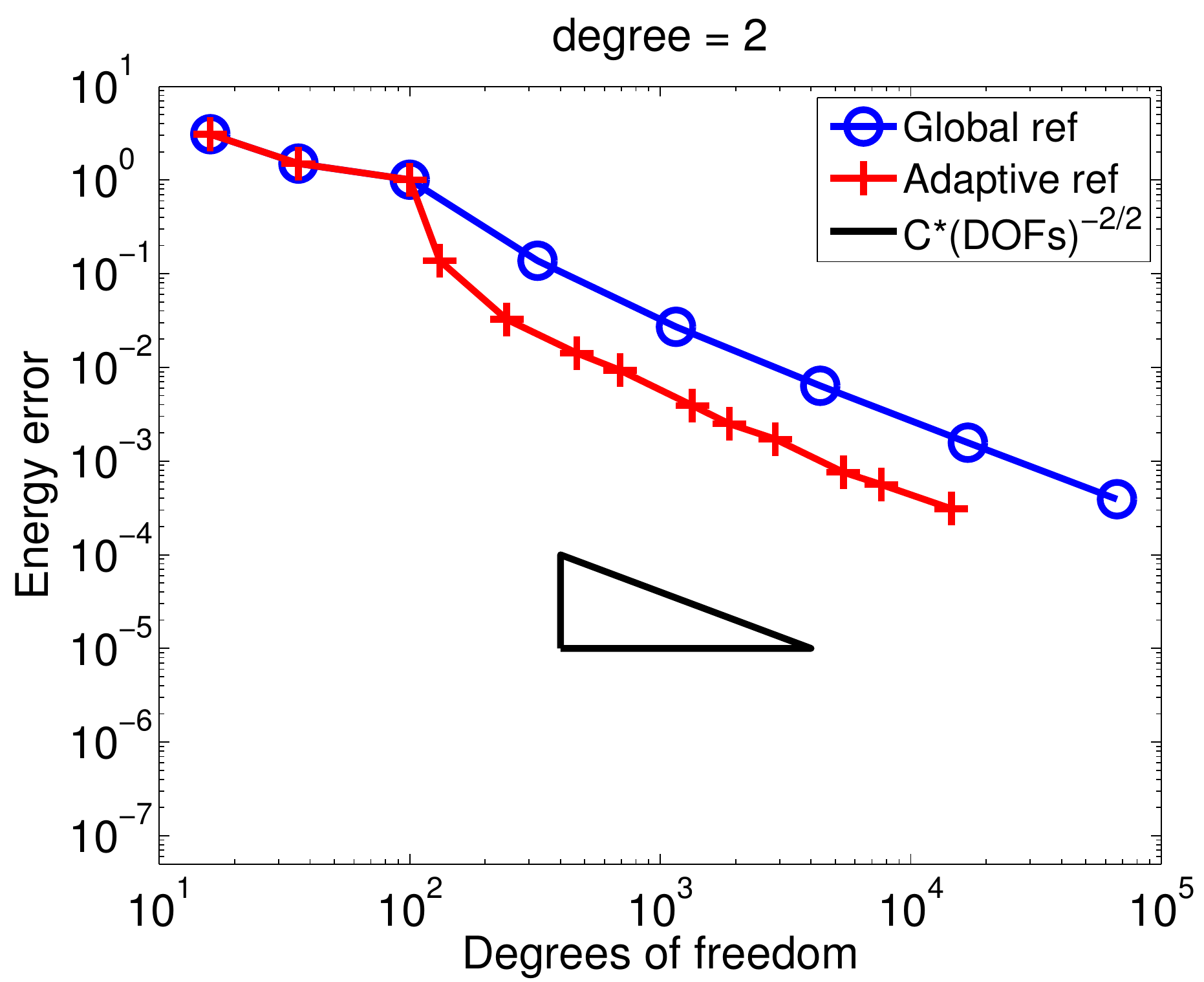}
\includegraphics[width=.3\textwidth]{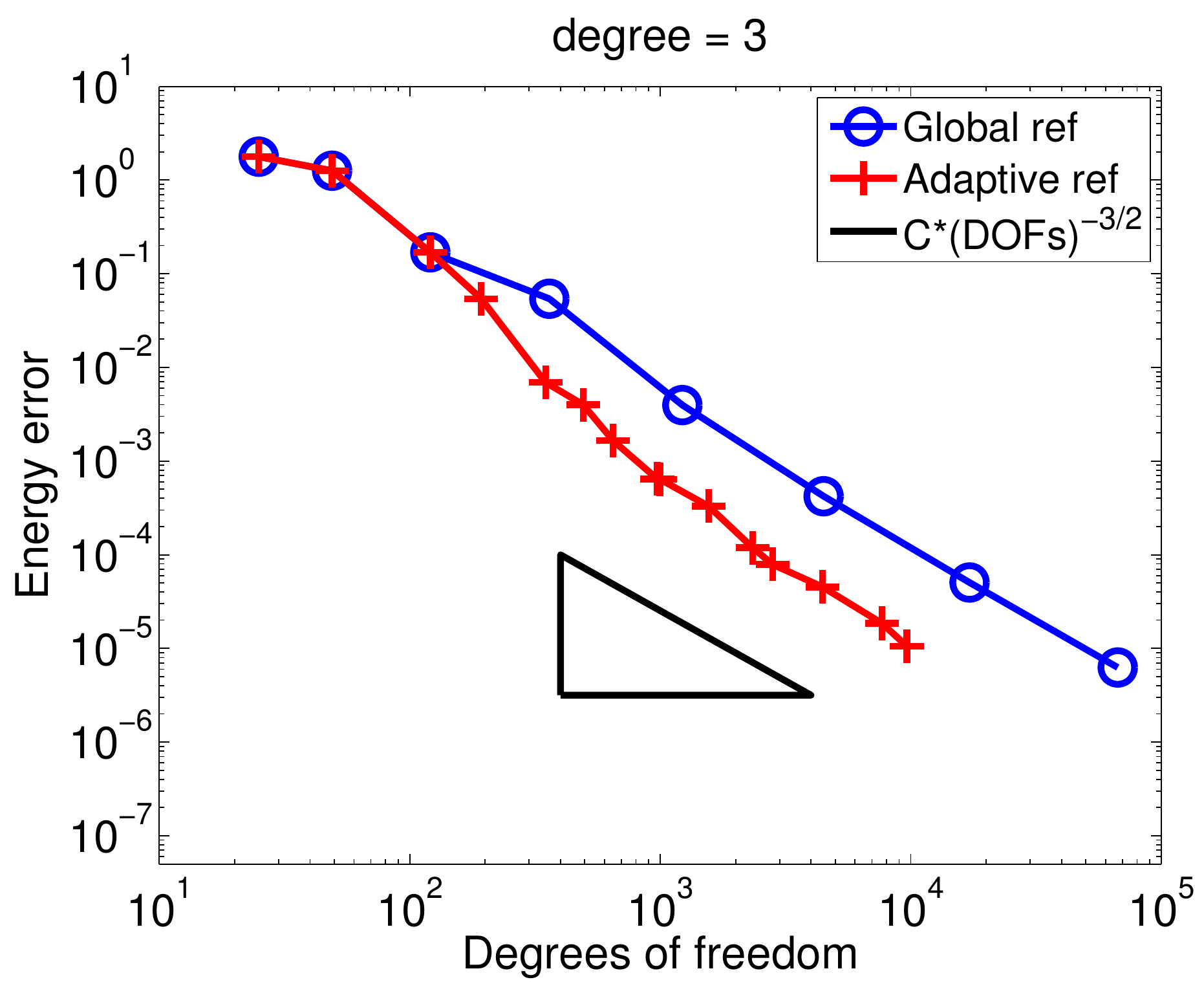}
\includegraphics[width=.3\textwidth]{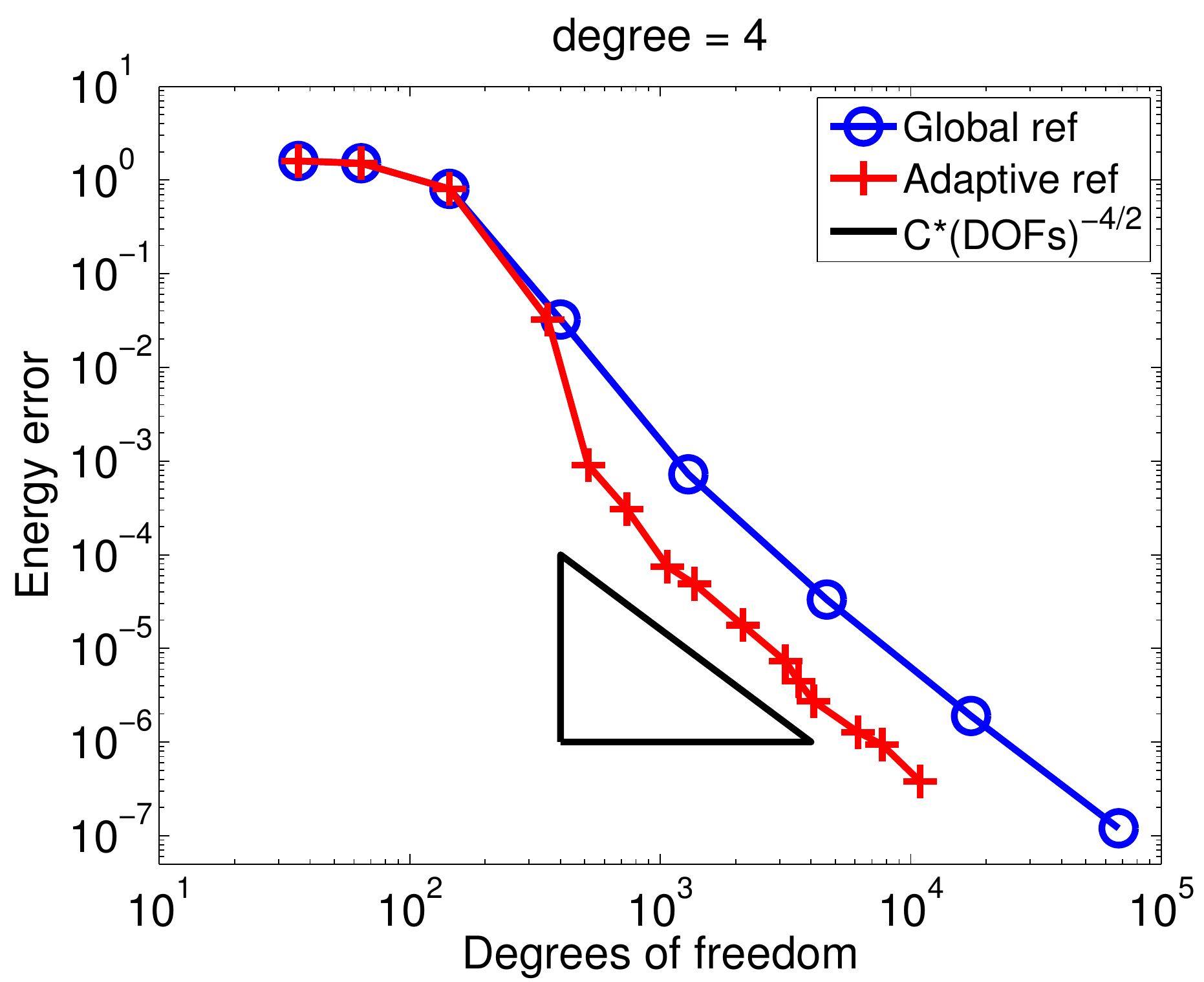}
\end{center}
\caption{\label{F:example regular solution meshes} \small Some hierarchical meshes for the solution of Example~\ref{Ex:regular solution}; for biquadratics with $3064$ elements and $2884$ DOFs (top left), bicubics with $3028$ elements and $2809$ DOFs (top middle) and biquartics with $3400$ elements and $3160$ DOFs (top right). Notice that although all meshes have nearly the same amount of elements, the refinement is more spread for high order splines due to the sizes of the basis function supports. We plot the energy error $|u-U|_{H^1(\Omega)}$ vs. degrees of freedom; for biquadratics (bottom left), bicubics (bottom middle) and biquartics (bottom right).}
\end{figure}

Additionally, in Figure~\ref{F:example regular solution meshes 2} we present the adaptive meshes obtained for different polynomial degrees, starting with an initial tensor product mesh of $4$ elements, reaching in all cases an enegy error $\approx 2.10^{-3}$. It is interesting to remark that although it is well known~\cite{ABCMS15} that the element-by-element assembly is very costly for higher degree, the use of adaptivity changes this picture. Indeed, due to the reduced number of elements for bicubics and biquartics, 
the time-to-solution for bicubics is $35\%$ and for biquartics is $17\%$ of the time-to-solution for biquadratics.

\begin{figure}[H!tbp]
\begin{center}
\includegraphics[width=.3\textwidth]{mesh-problem3-degree2-est3-iter010--eps-converted-to.pdf}
\includegraphics[width=.3\textwidth]{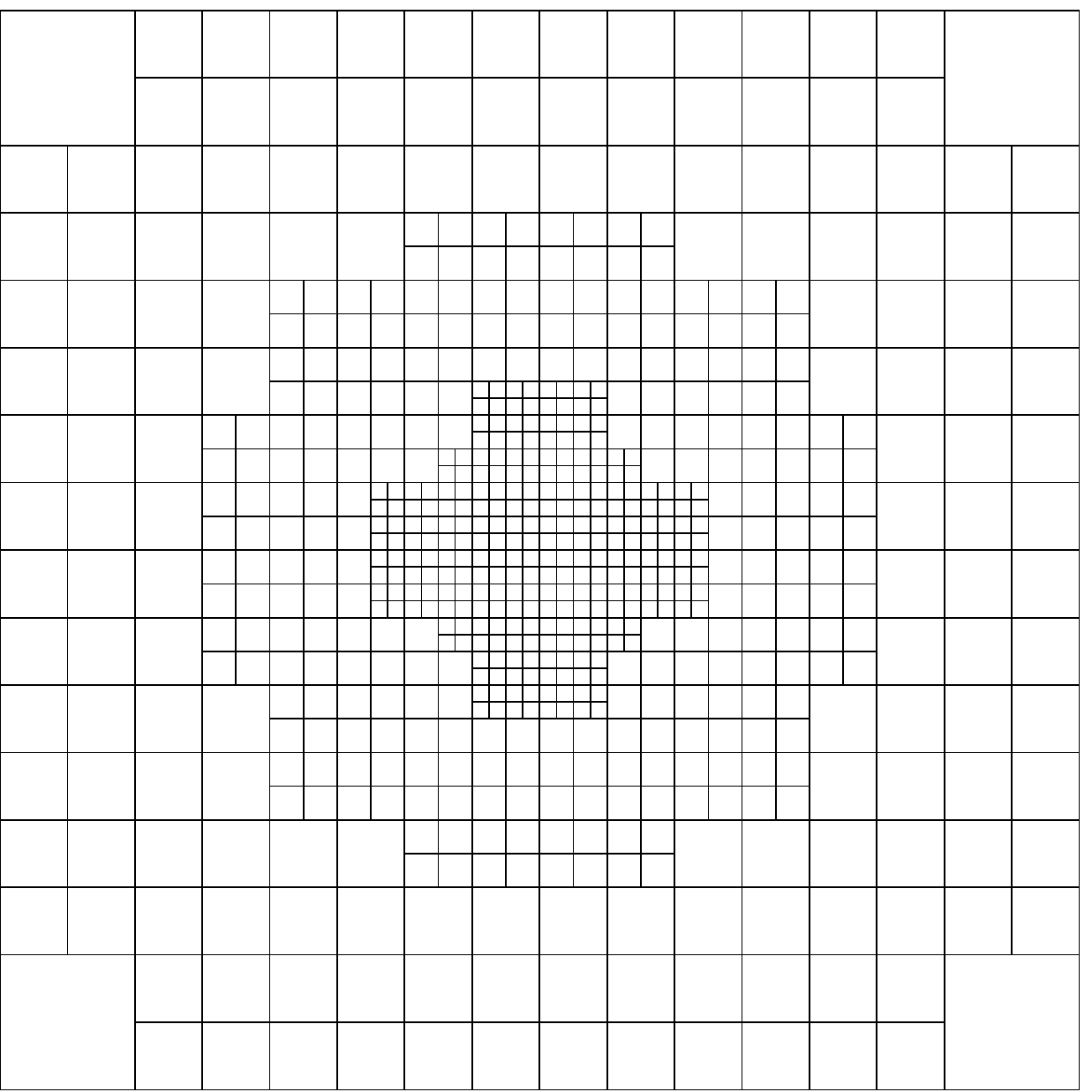} 
\includegraphics[width=.3\textwidth]{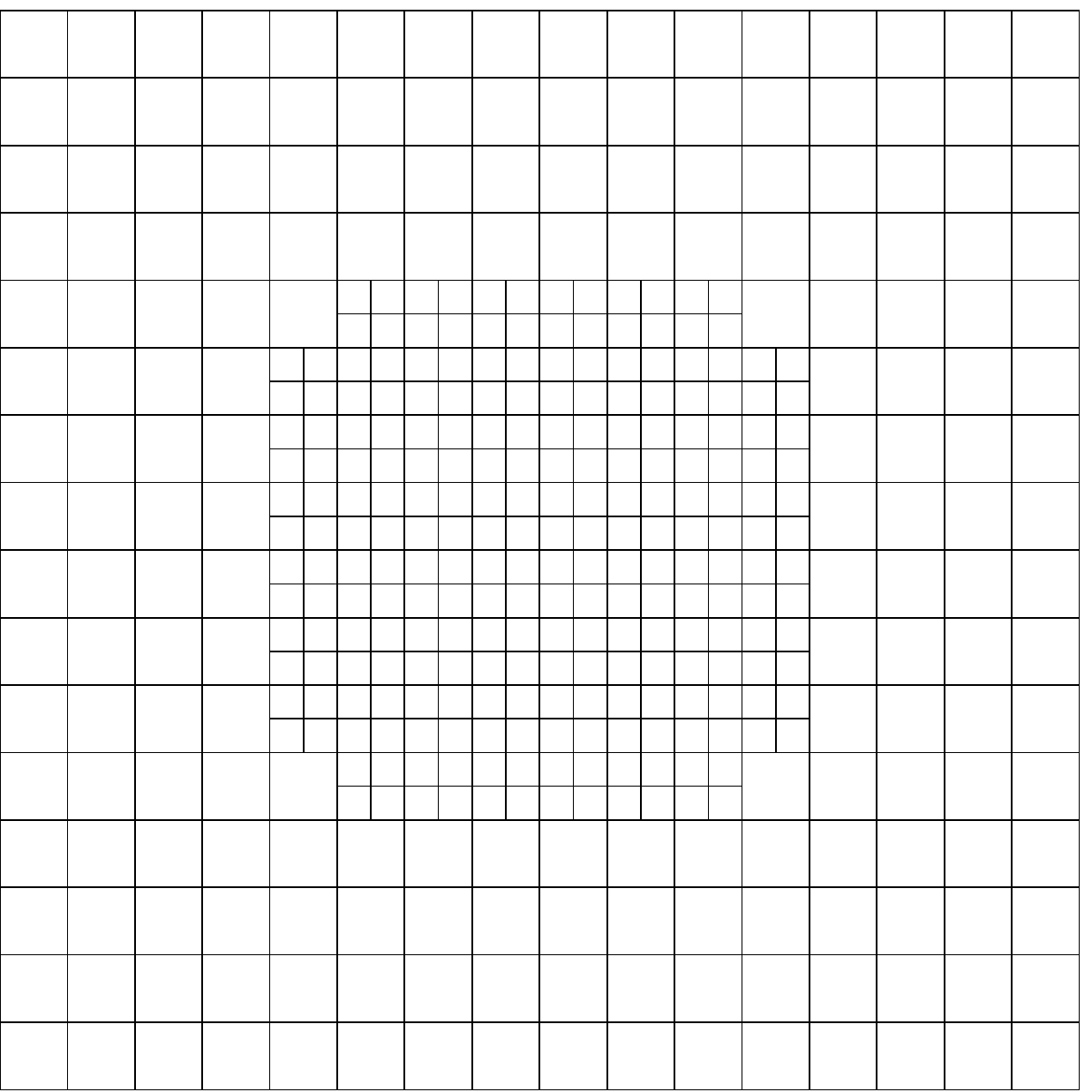}
\end{center}
\caption{\label{F:example regular solution meshes 2} \small Comparison of adaptive meshes obtained for Example~\ref{Ex:regular solution} using different polynomial degrees, obtaining in all cases that $|u-U|_{H^1(\Omega)}\approx 2.10^{-3}$. The mesh for biquadratics has $3064$ elements and $2884$ DOFs (left), the mesh bicubics has $688$ elements and $649$ DOFs (middle) and the mesh for biquartics has $436$ elements and $516$ DOFs (right).}
\end{figure}
\end{example}


\begin{example}[Diagonal refinement in the unit square]\label{Ex:diagonal}
We take $\Omega=[0,1]\times[0,1]$ and choose $f$ and $g$ such that the exact solution $u$ of~\eqref{E:poisson} is given by $u(x,y) = \tan^{-1}(25(x-y))$. In Figure~\ref{F:example diagonal} we plot the exact solution, some hierarchical meshes and the decay of the energy error vs. degrees of freddom for different spline degrees. As in the previous example, both tensor product meshes and hierarchical meshes reach optimal orders of convergence, but notice that in all cases, the curves corresponding to the adaptive strategy are again meaningfully by below. For example, for attaining an energy error of around $10^{-5}$ using biquadratics, the global refinement requires $67600$ DOFs whereas the adaptive strategy only needs $10186$ DOFs.

\begin{figure}[H!tbp]
\begin{center}
\includegraphics[width=.3\textwidth]{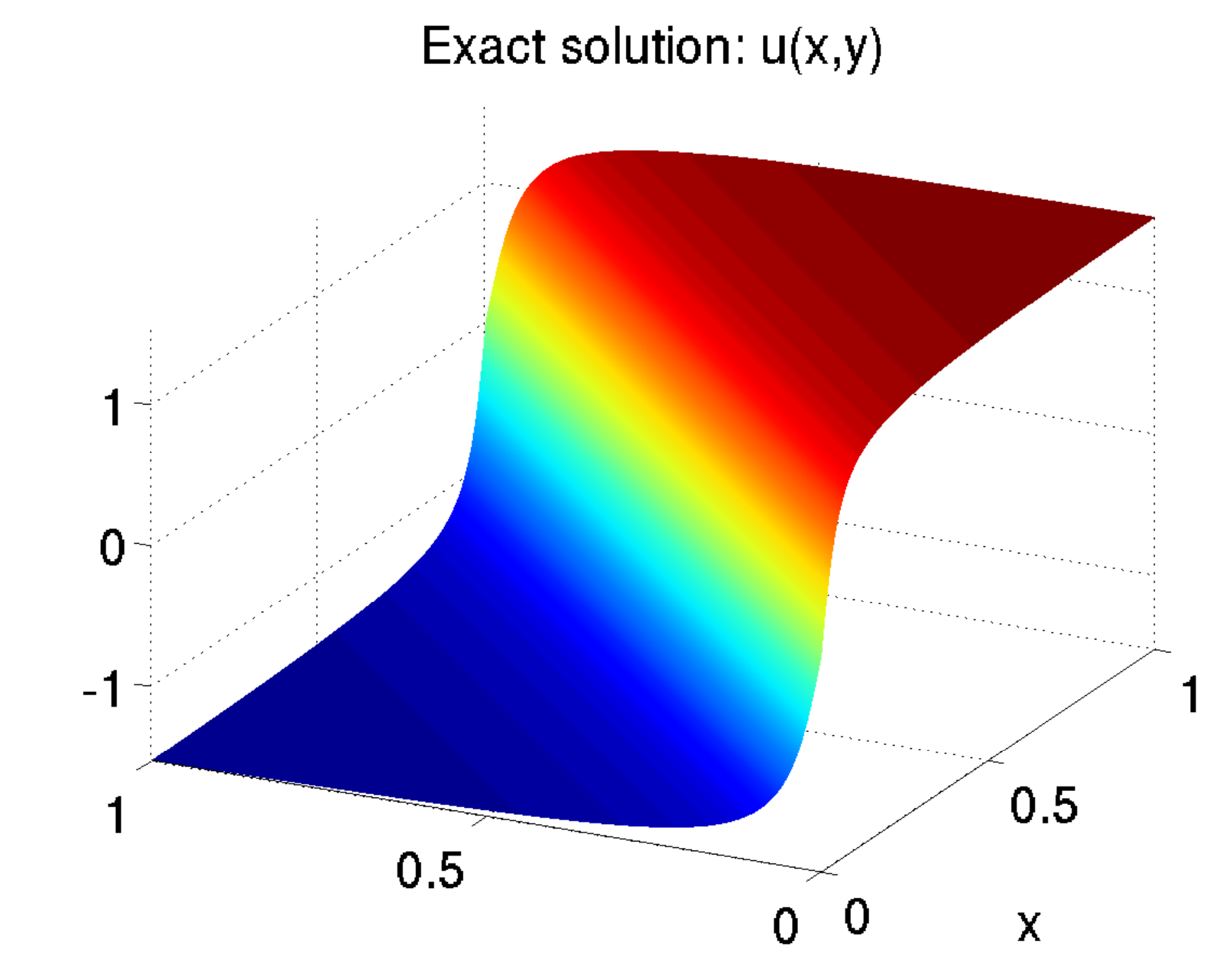}
\includegraphics[width=.2\textwidth]{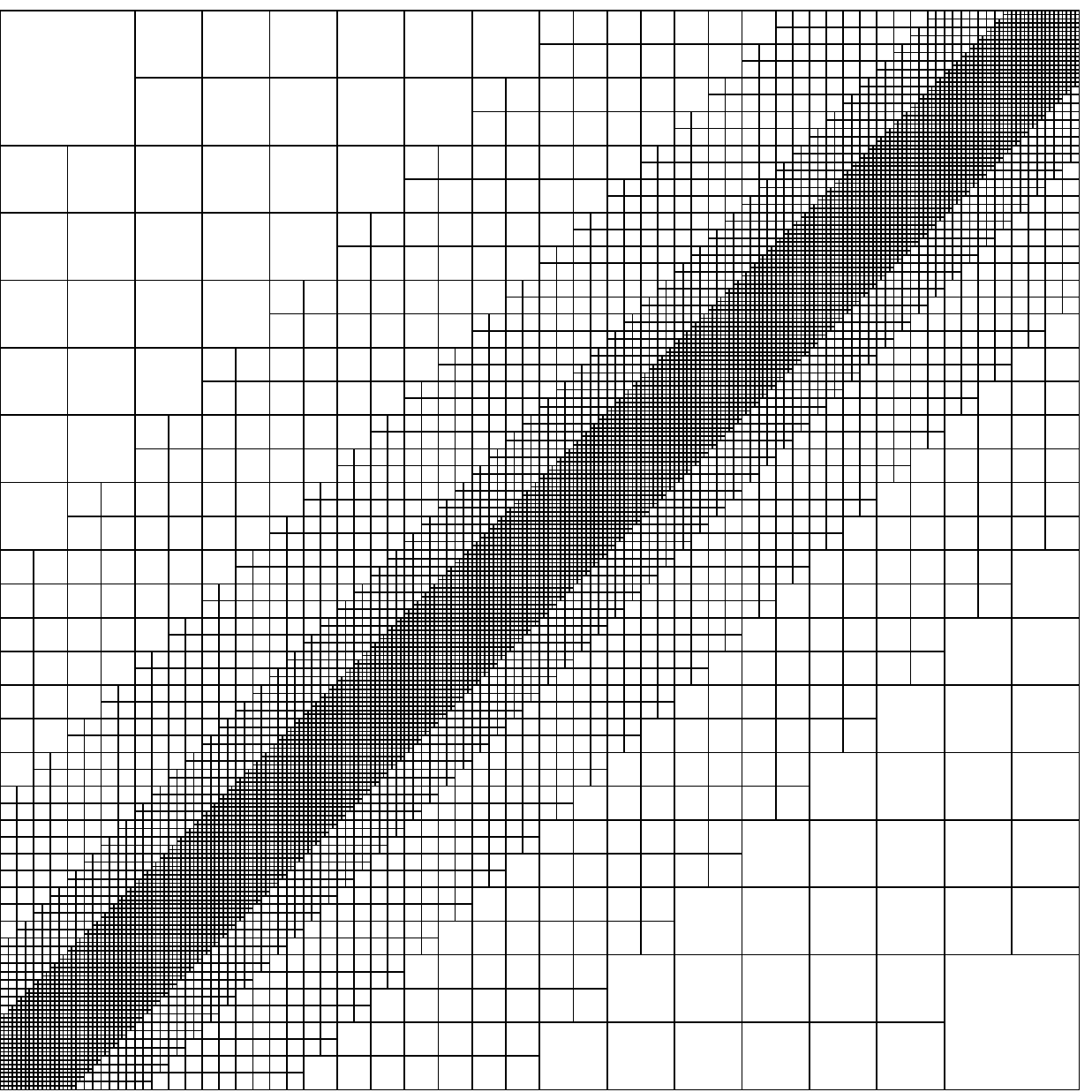}
\includegraphics[width=.2\textwidth]{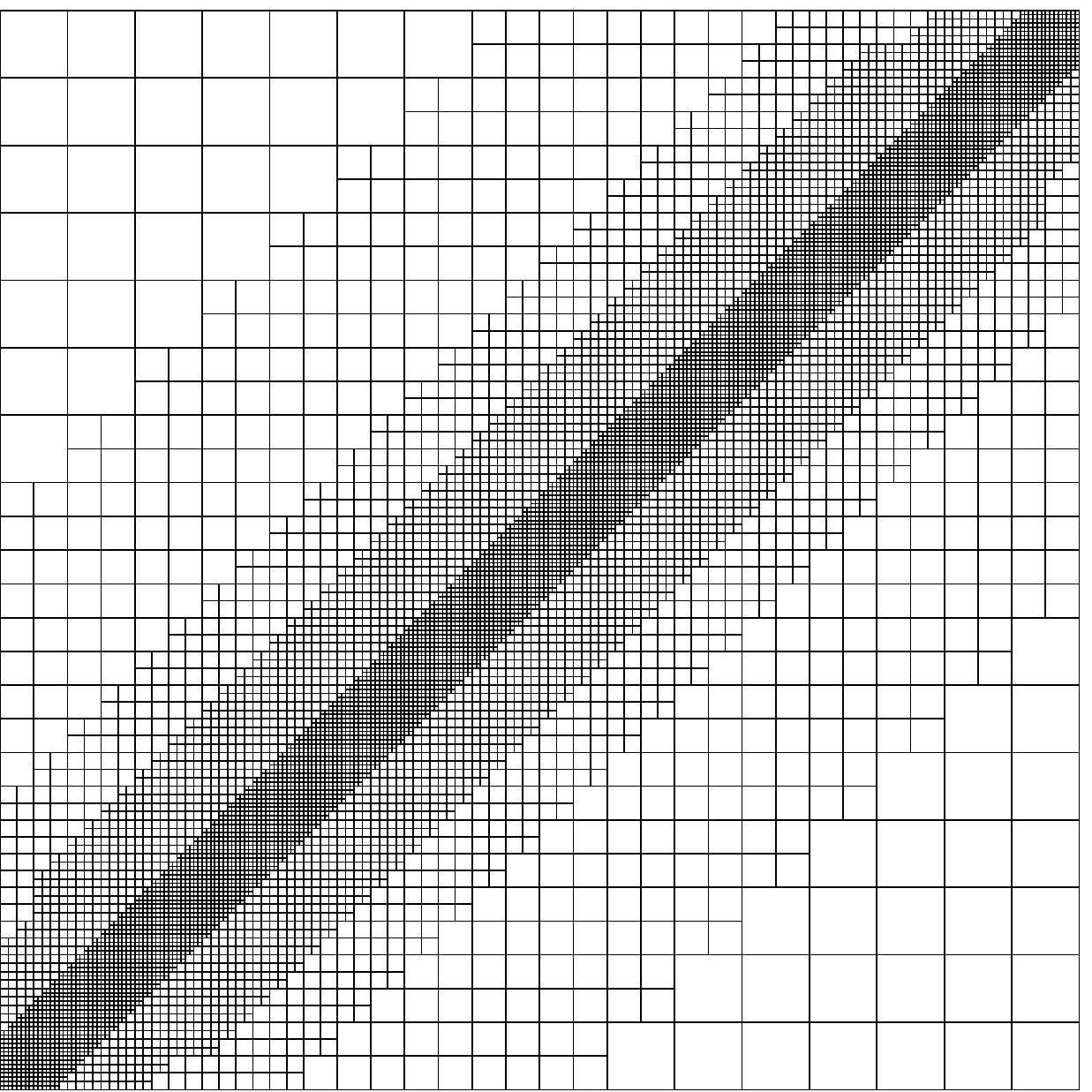} 
\includegraphics[width=.2\textwidth]{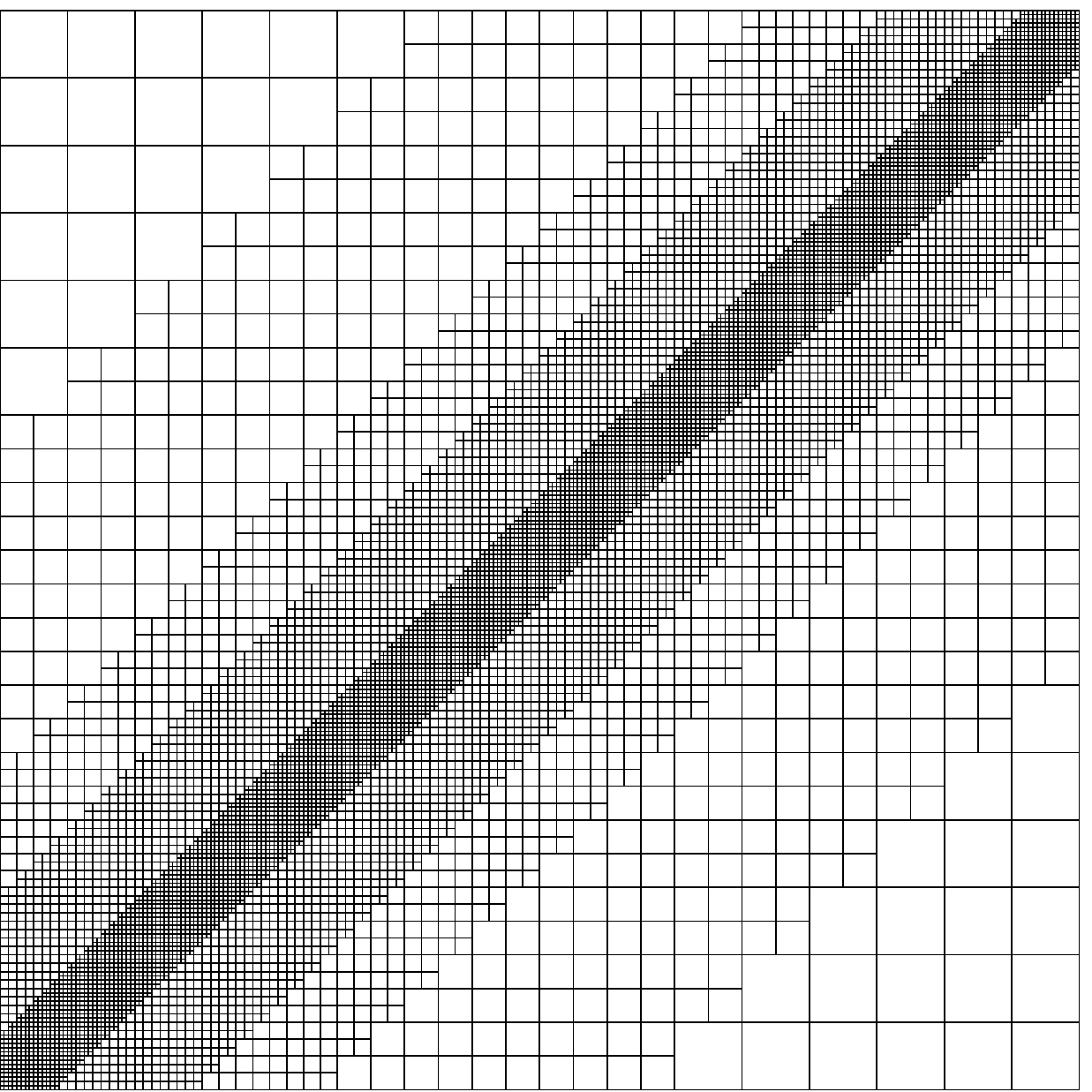}
\medskip 

\includegraphics[width=.3\textwidth]{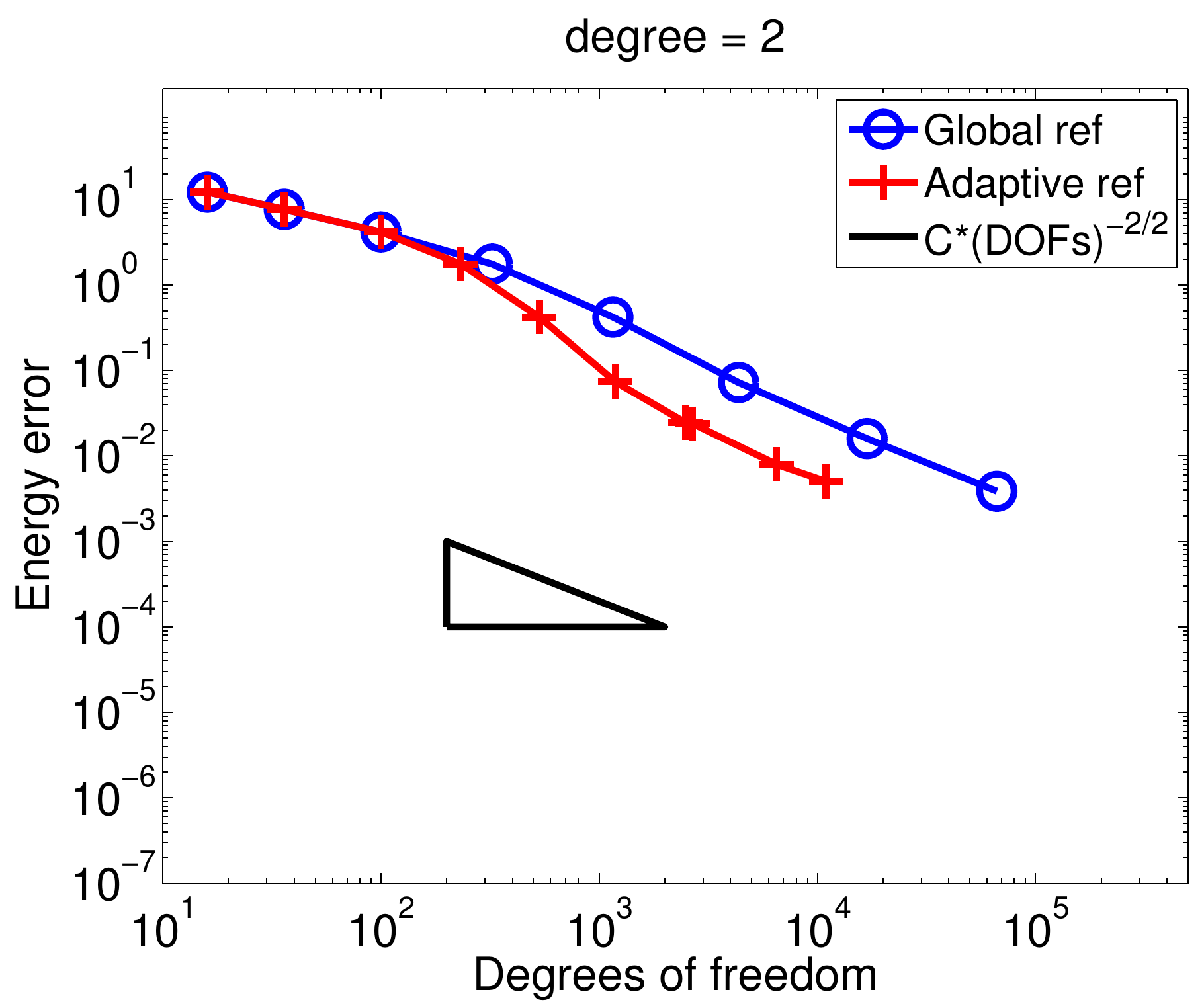}
\includegraphics[width=.3\textwidth]{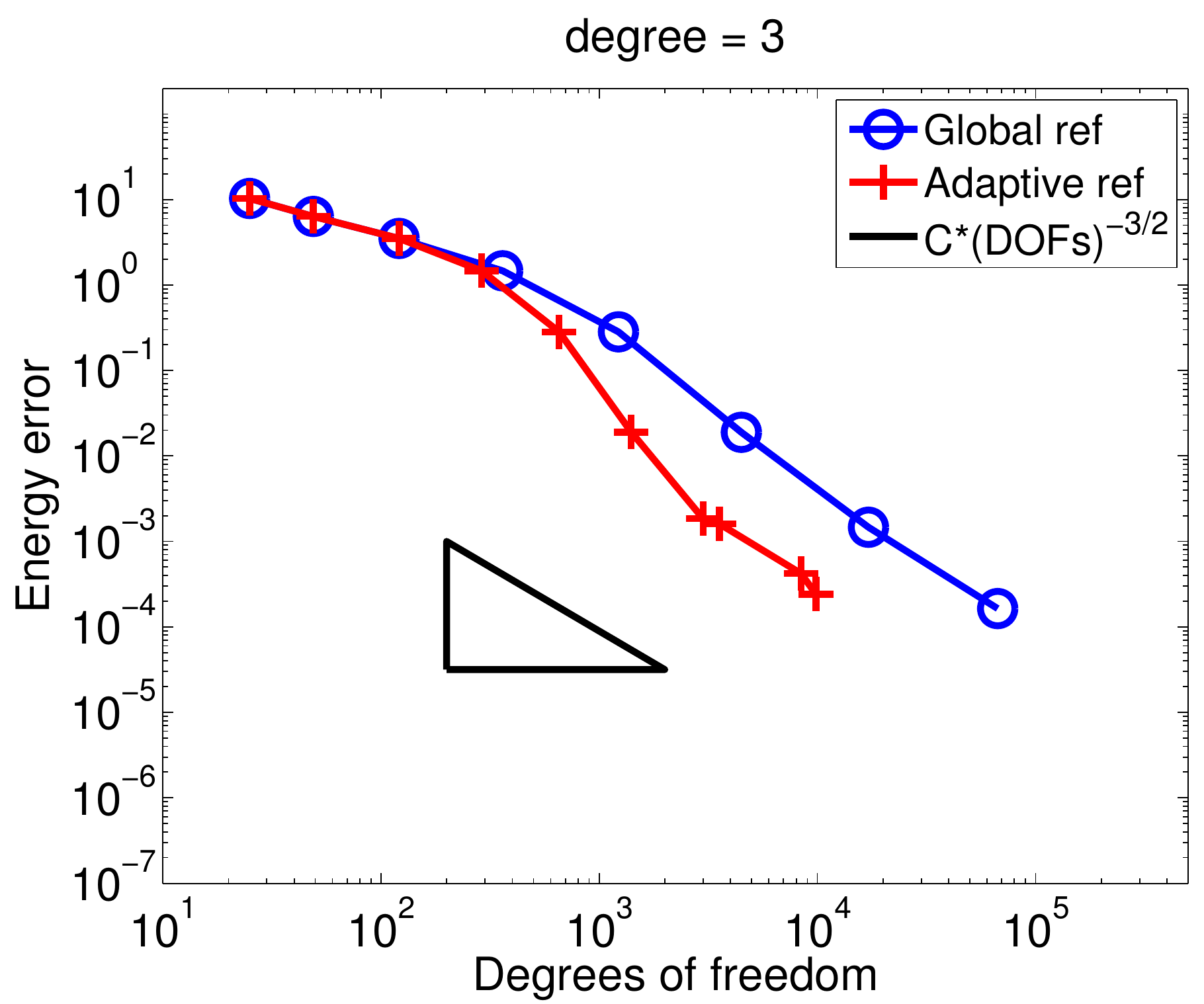}
\includegraphics[width=.3\textwidth]{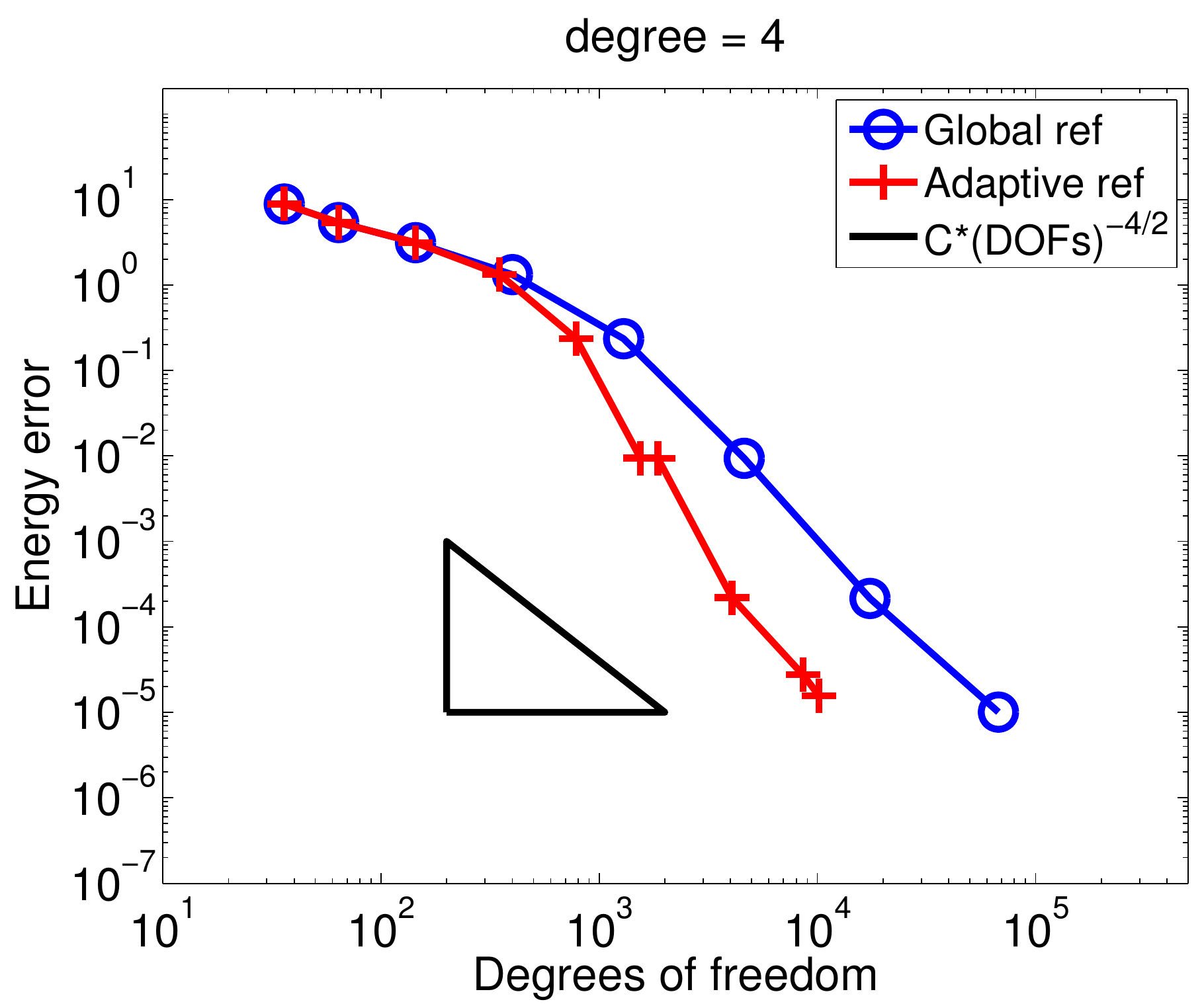}
\end{center}
\caption{\label{F:example diagonal} \small Some hierarchical meshes for the solution of Example~\ref{Ex:diagonal}; for biquadratics with $11578$ elements and $10958$ DOFs (top left), bicubics with $10792$ elements and $9865$ DOFs (top middle) and biquartics with $11338$ elements and $10186$ DOFs (top right). Notice that although all meshes have nearly the same amount of elements, the refinement is more spread for high order splines due to the sizes of the basis function supports. We plot the energy error $|u-U|_{H^1(\Omega)}$ vs. degrees of freedom; for biquadratics (bottom left), bicubics (bottom middle) and biquartics (bottom right).}
\end{figure}
\end{example}


\begin{example}[Singular domain: an L-shaped domain]\label{Ex:Lshaped}
We consider the L-shaped domain $\Omega=[-1,1]^2\setminus((0,1)\times(-1,0))$ and choose $f$ and $g$ such that the exact solution $u$ of~\eqref{E:poisson} is given in polar coordinates by $u(\rho,\varphi) = \rho^{2/3} \sin(2\varphi/3)$. In Figure~\ref{F:example L-shaped} we plot the exact solution, some hierarchical meshes and the decay of the energy error vs. degrees of freddom for different spline degrees. We notice that the global refinement associated to tensor product spaces does not reach the optimal order of convergence due to the singularity. On the other hand, the adaptive strategy recovers the optimal decay for the energy error given by $\OO((\#\text{DOFs})^{\frac{p}{2}})$.

\begin{figure}[H!tbp]
\begin{center}
\includegraphics[width=.3\textwidth]{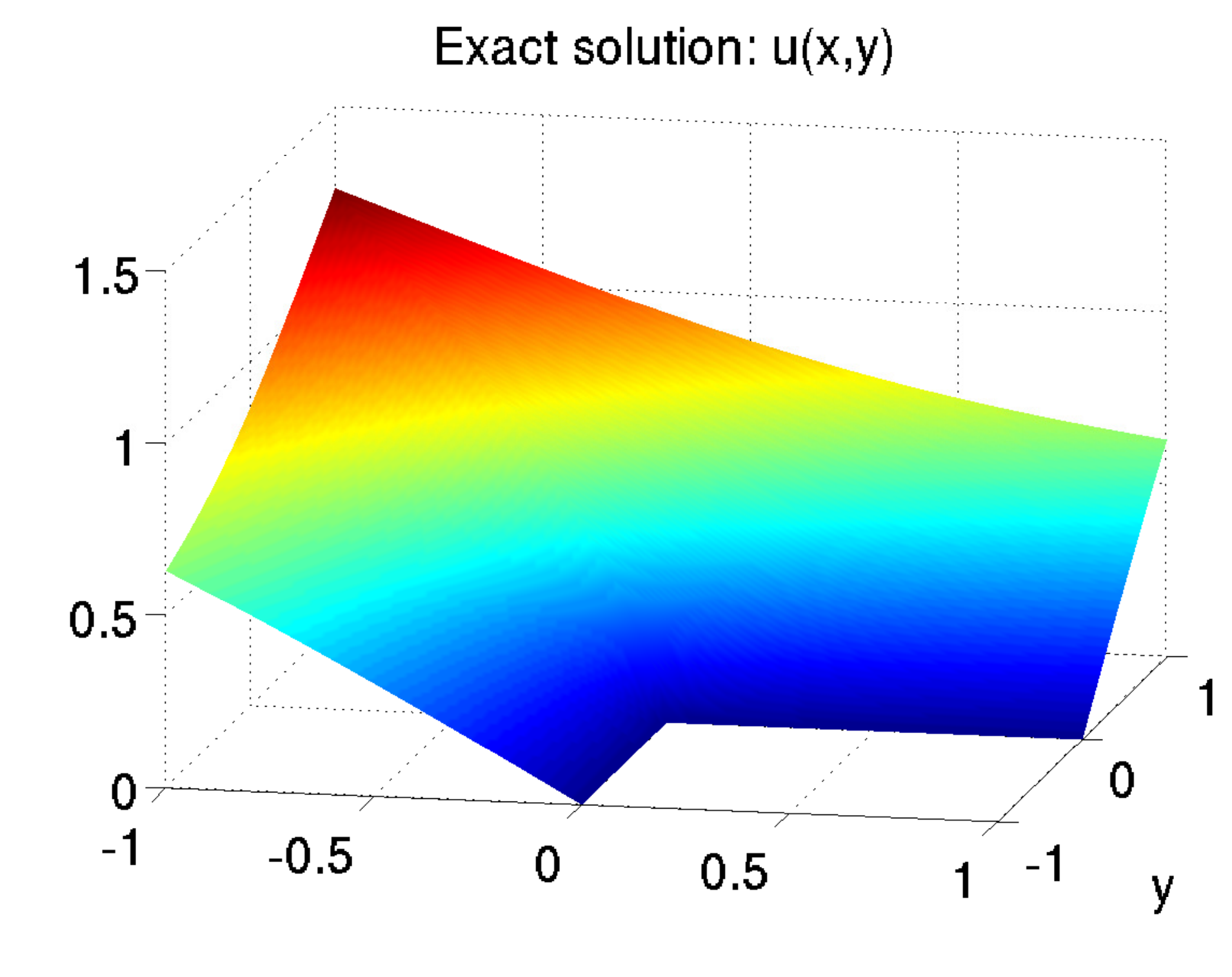}
\includegraphics[width=.2\textwidth]{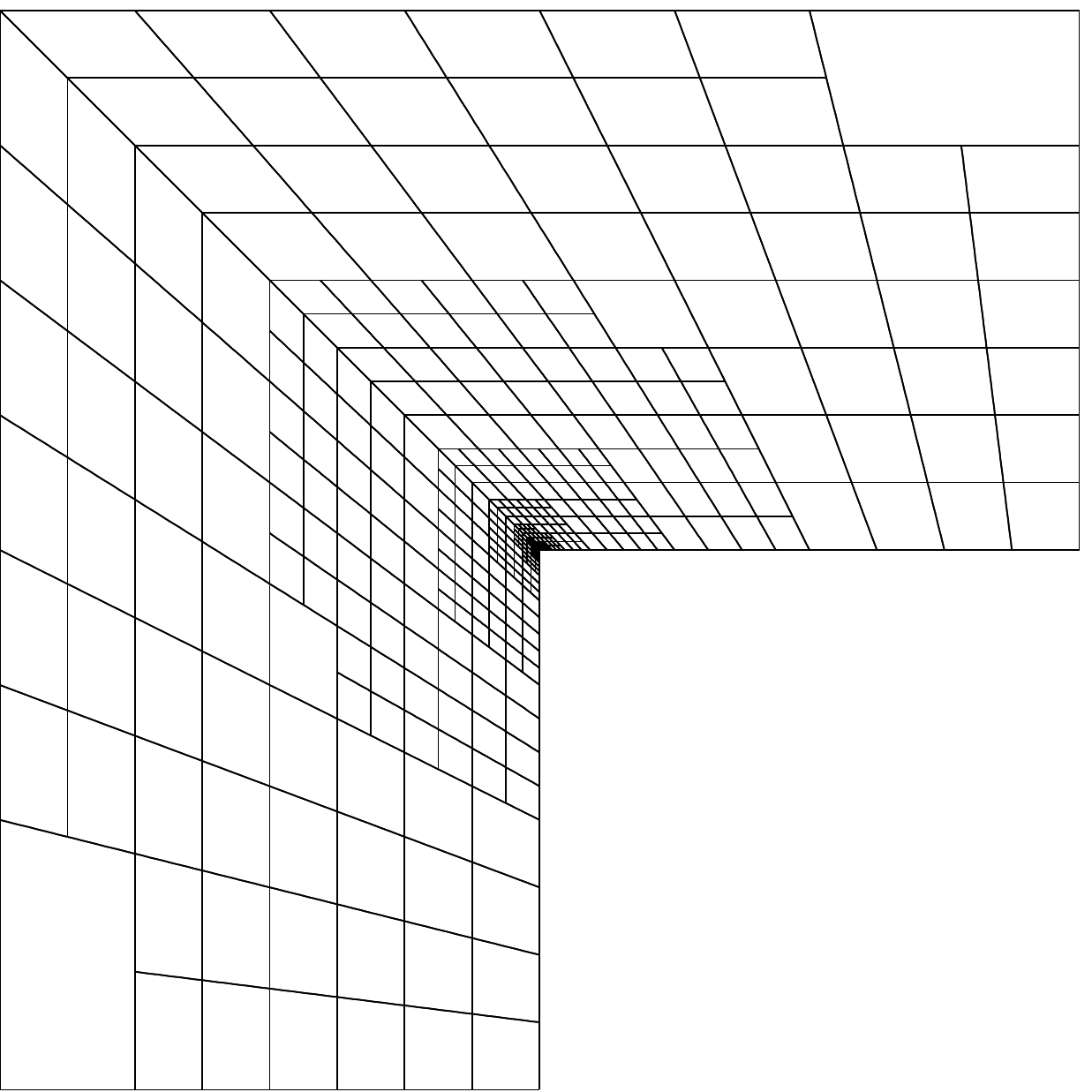}
\includegraphics[width=.2\textwidth]{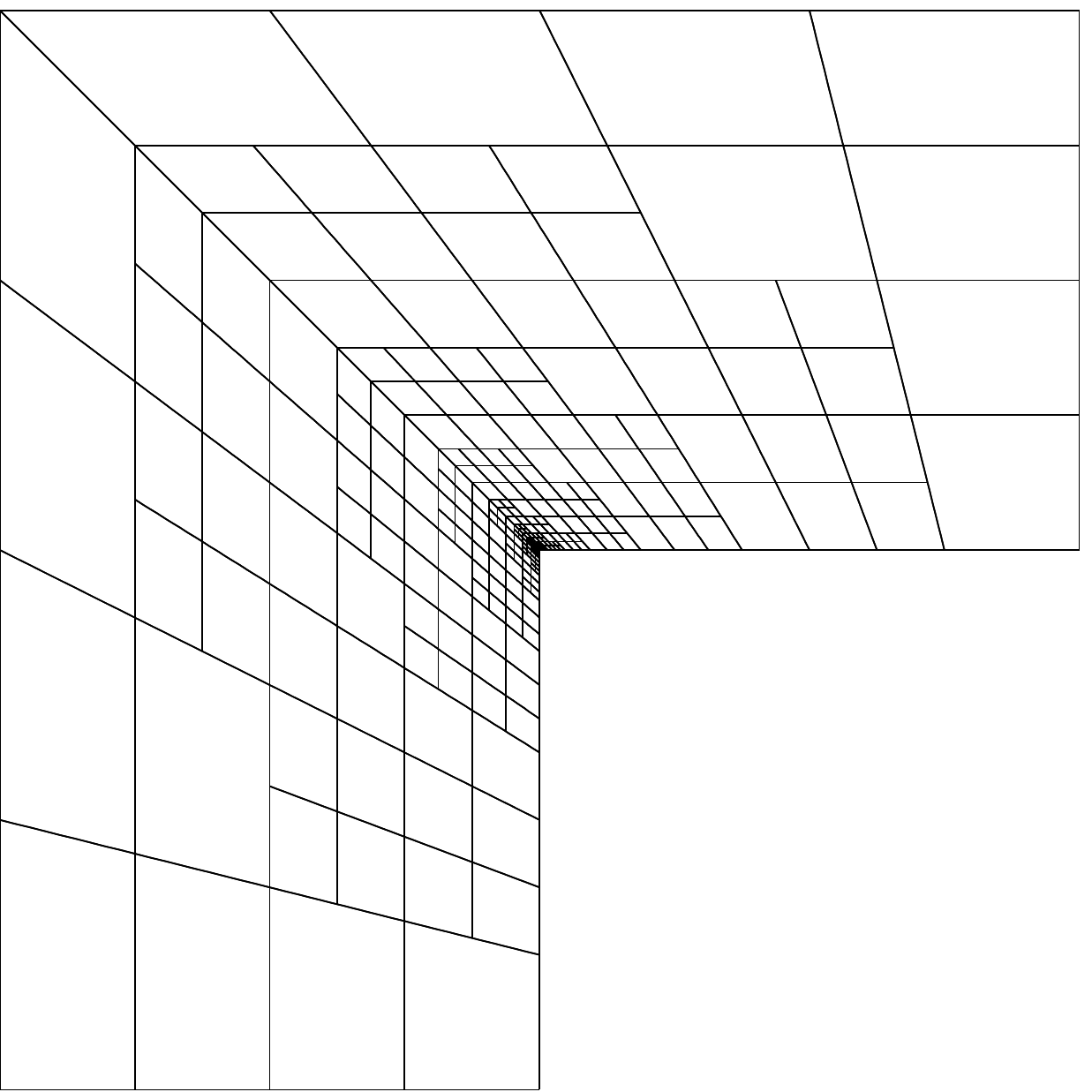} 
\includegraphics[width=.2\textwidth]{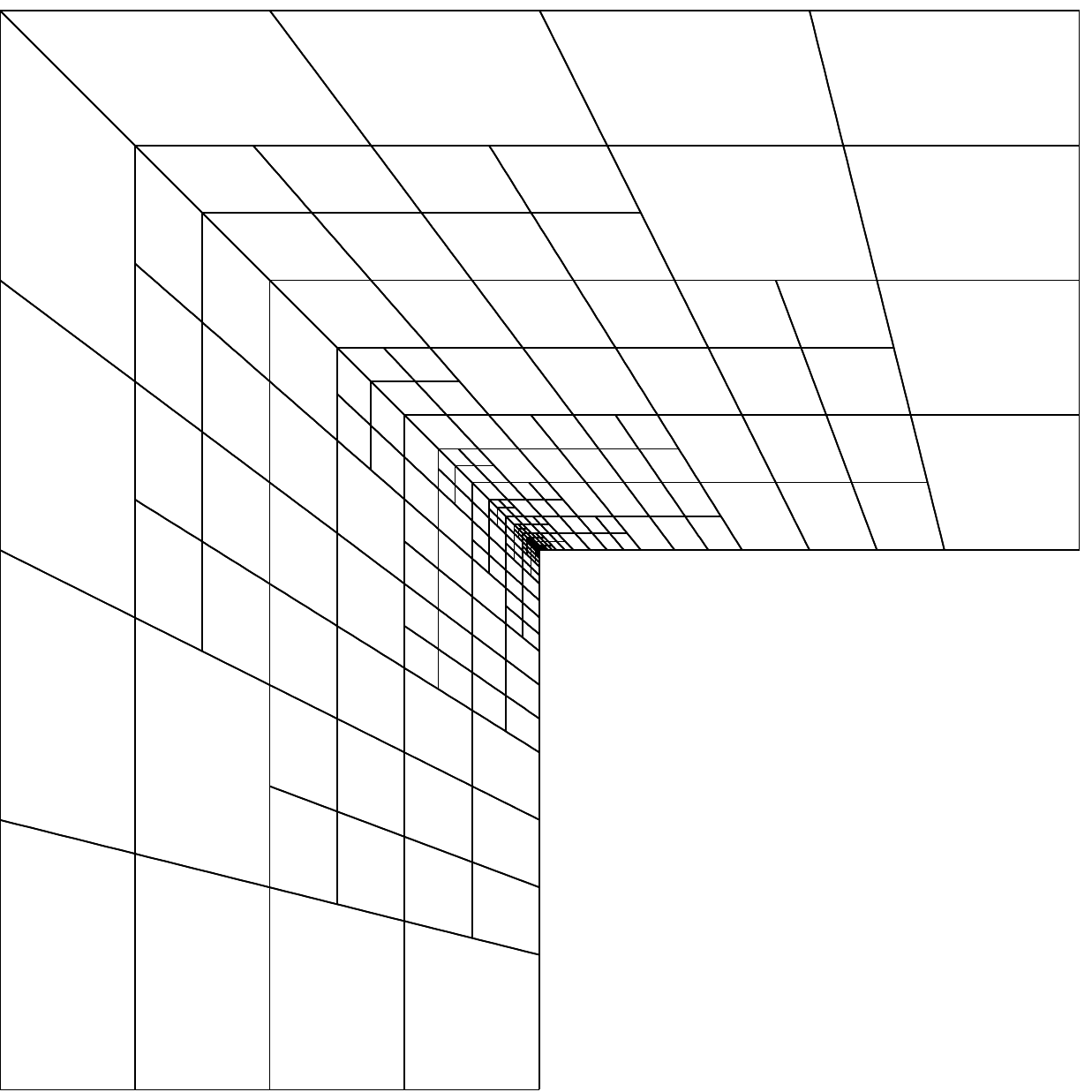}
\medskip

\includegraphics[width=.3\textwidth]{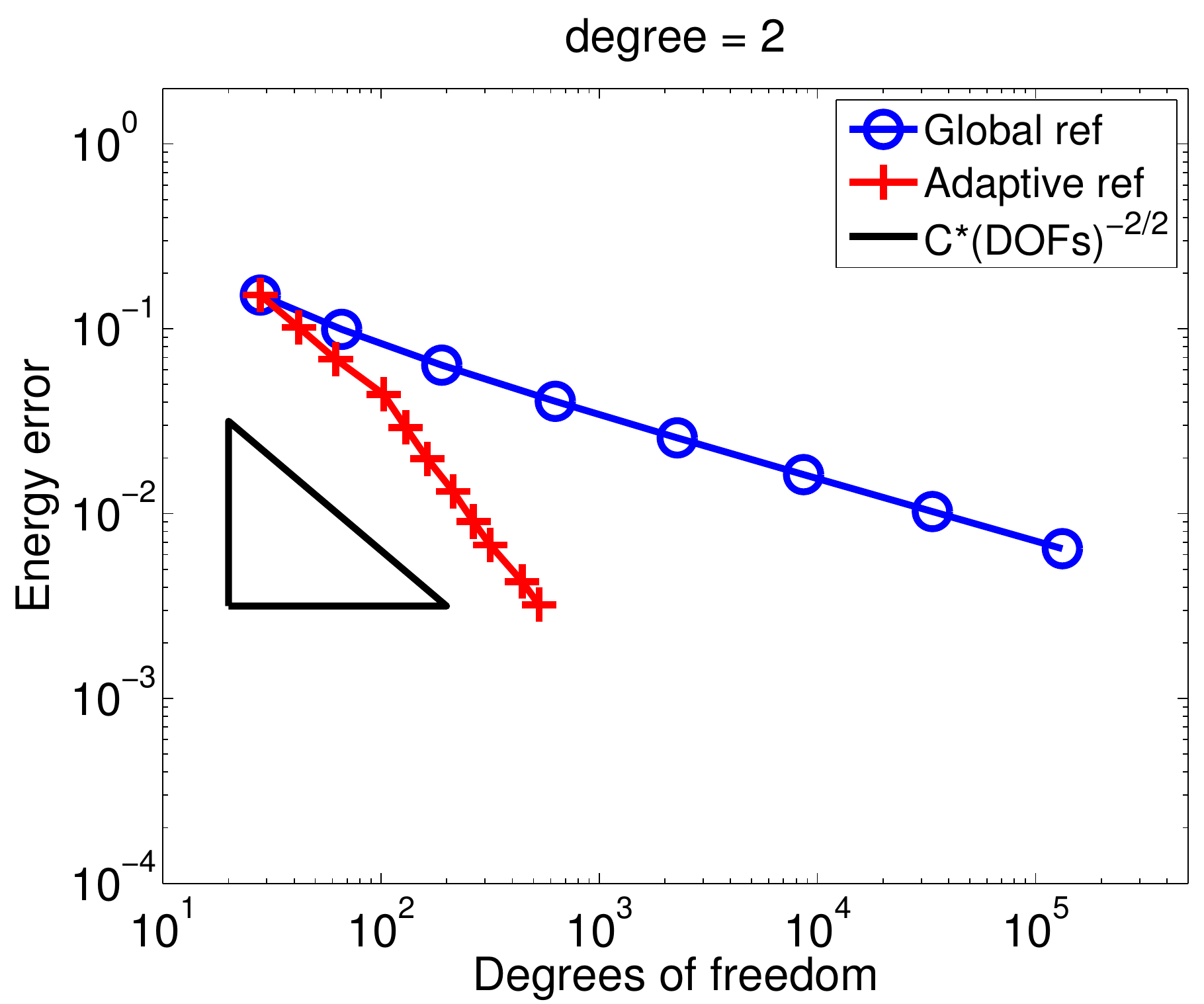}
\includegraphics[width=.3\textwidth]{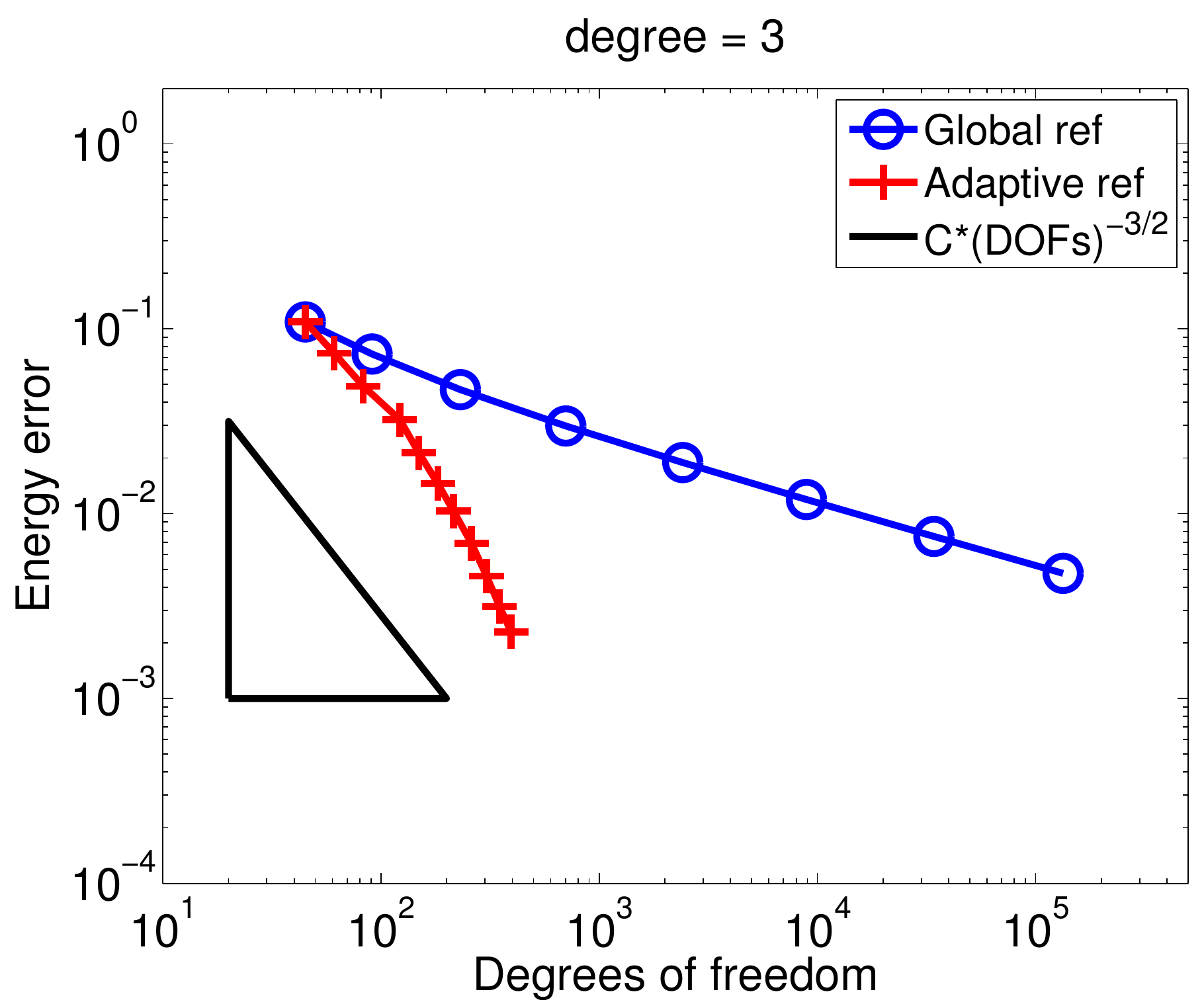}
\includegraphics[width=.3\textwidth]{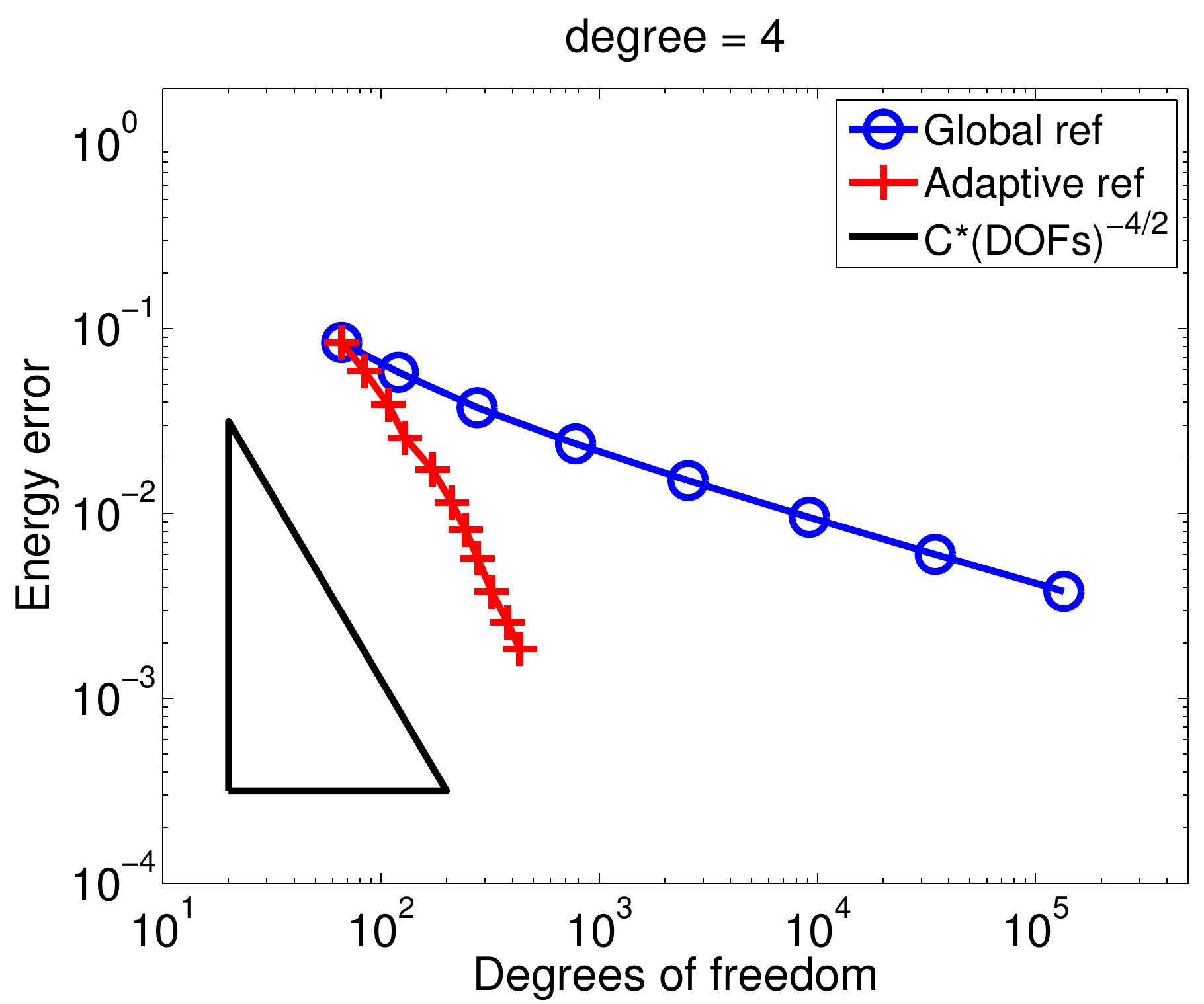}
\end{center}
\caption{\label{F:example L-shaped} \small  Some hierarchical meshes for the solution of Example~\ref{Ex:Lshaped} obtaining in all cases $|u-U|_{H^1(\Omega)}\approx 3.10^{-3}$; for biquadratics with $500$ elements and $530$ DOFs (top left), bicubics with $314$ elements and $350$ DOFs (top middle) and biquartics with $254$ elements and $322$ DOFs (top right). In addition, we plot the energy error $|u-U|_{H^1(\Omega)}$ vs. degrees of freedom; for biquadratics (bottom left), bicubics (bottom middle) and biquartics (bottom right).}
\end{figure}
\end{example}


\begin{example}[Singular solution in the unit square]\label{Ex:singular solution}
We consider a problem whose solution is not too smooth. Specifically, we take $\Omega = [0,1]\times[0,1]$, and choose $g\equiv 0$ and $f$ such that the exact solution $u$ of~\eqref{E:poisson} is given by $u(x,y) = x^{2.3}(1-x)y^{2.9}(1-y)$. Notice that in this case, $u\in H^2(\Omega)\setminus H^3(\Omega)$ and there are singularities along the sides $x=0$ and $y=0$, being a bit stronger the singularity along $x=0$; see Figure~\ref{F:example singular solution}. Some hierarchical meshes and the error decay in terms of degrees of freddom for different polynomial degrees are presented in Figure~\ref{F:example singular solution meshes}. We notice that both global refinement and the adaptive refinement reach the optimal order of convergence when using biquadratics (bottom left), but only the adaptive refinement converges with optimal rates when using bicubics (bottom middle) and biquartics (bottom right), due to the singularity of the solution.

\begin{figure}[H!tbp]
\begin{center}
\includegraphics[width=.3\textwidth]{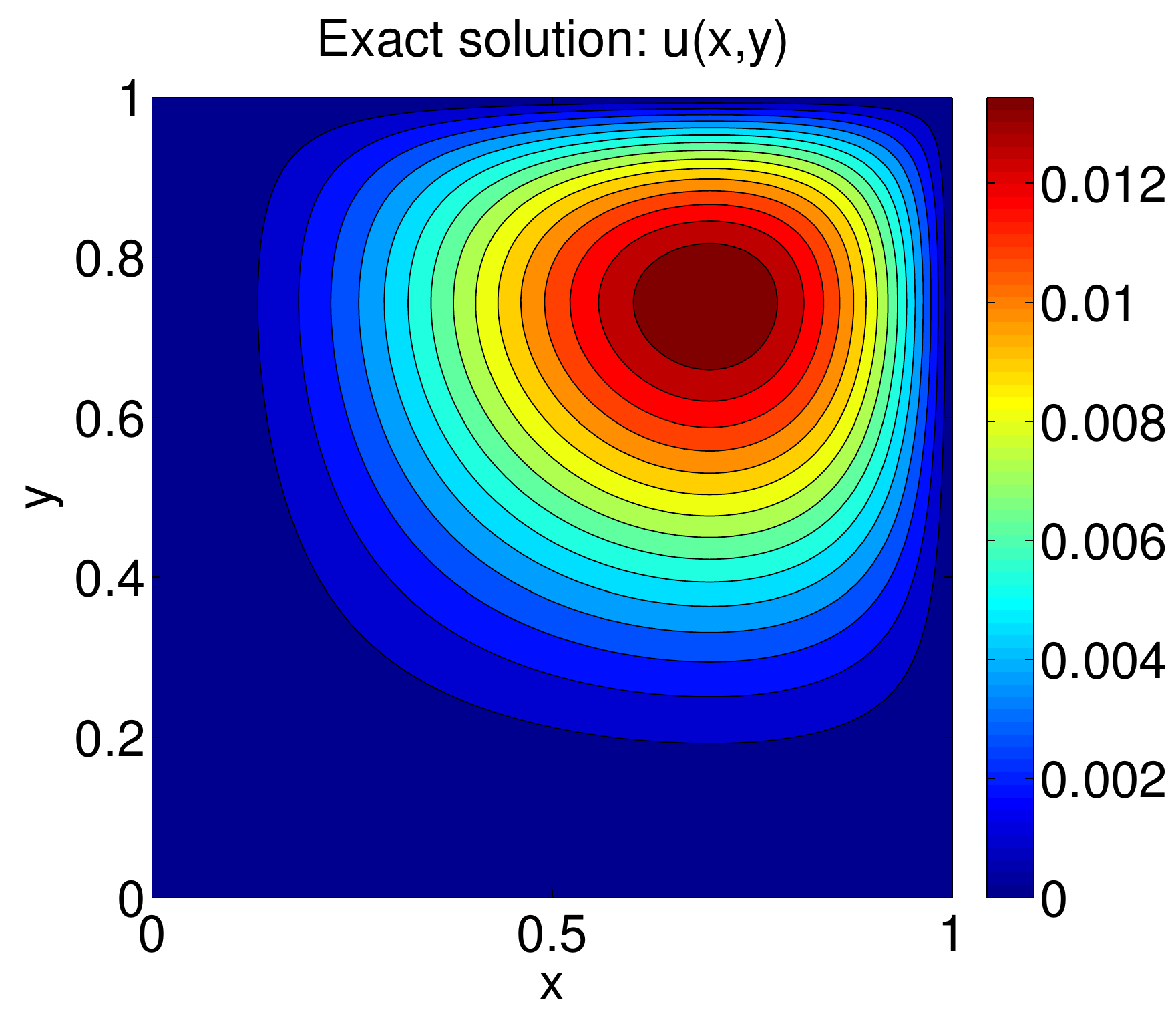} 
\includegraphics[width=.3\textwidth]{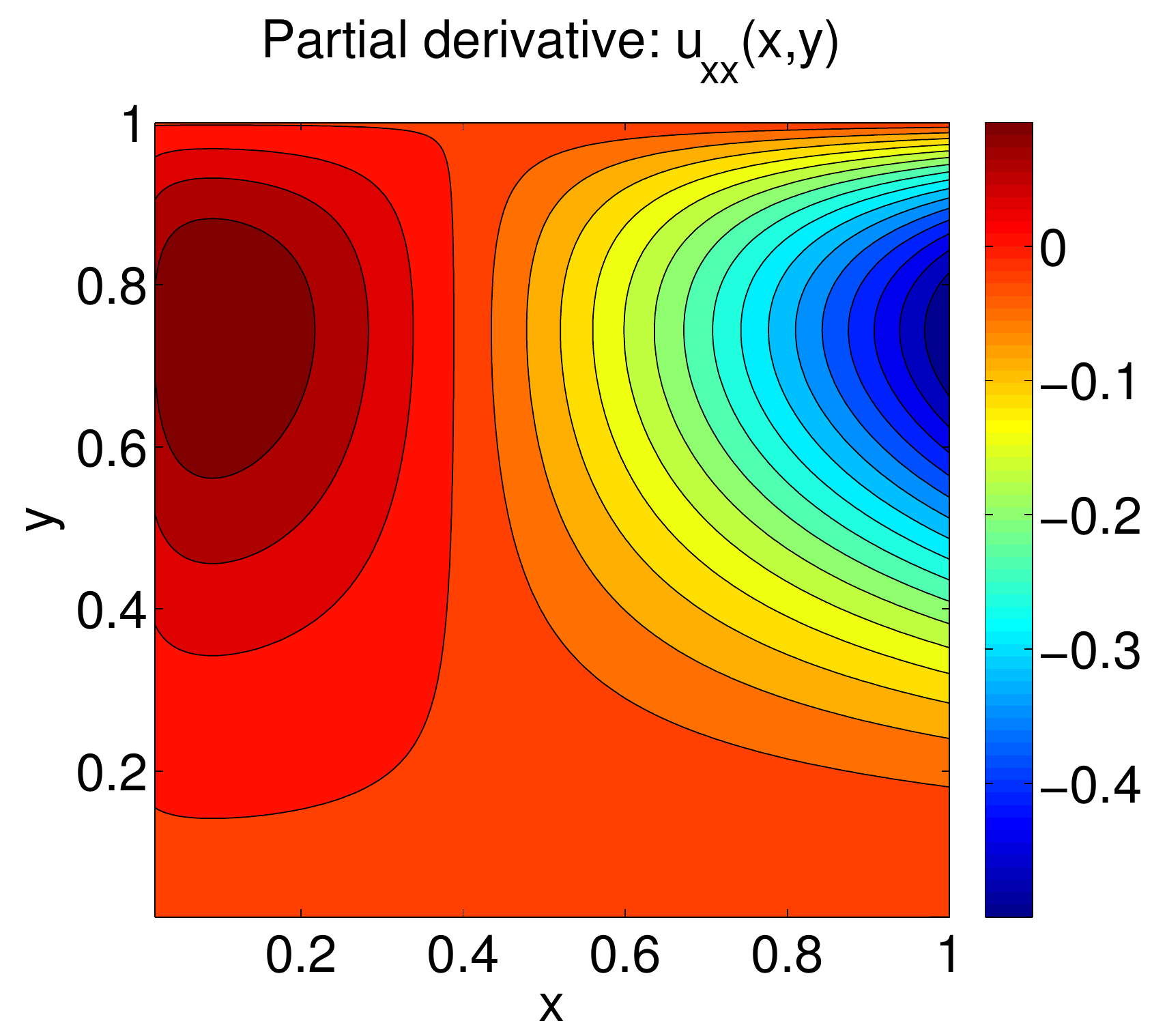} 
\includegraphics[width=.3\textwidth]{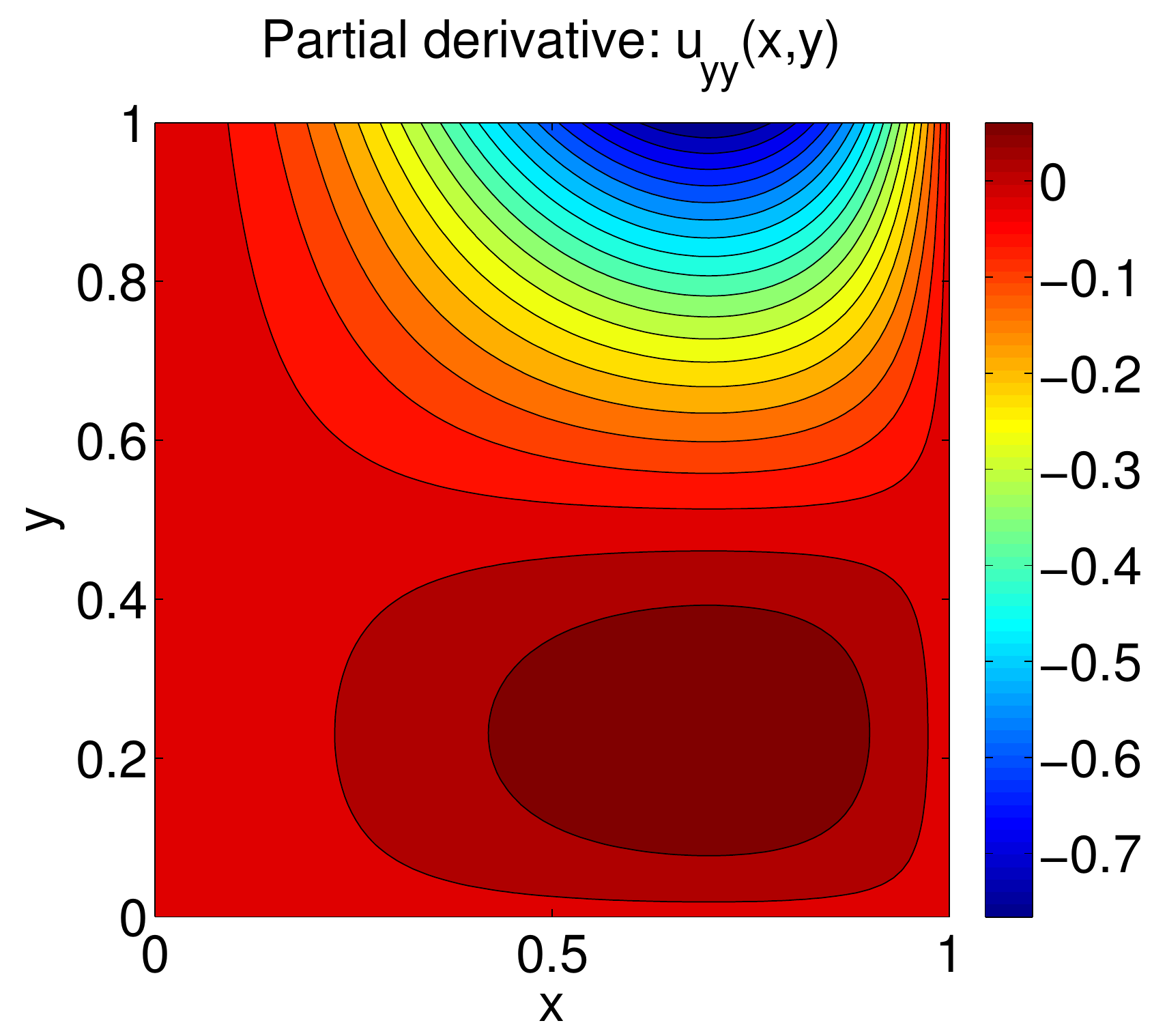} 
\end{center}
\caption{\label{F:example singular solution} \small The exact solution $u(x,y) = x^{2.3}(1-x)y^{2.9}(1-y)$ (left) corresponding to Example~\ref{Ex:singular solution} and its derivatives $u_{xx}$ (middle) and $u_{yy}$ (right).}
\end{figure}

\begin{figure}[H!tbp]
\begin{center}
\includegraphics[width=.3\textwidth]{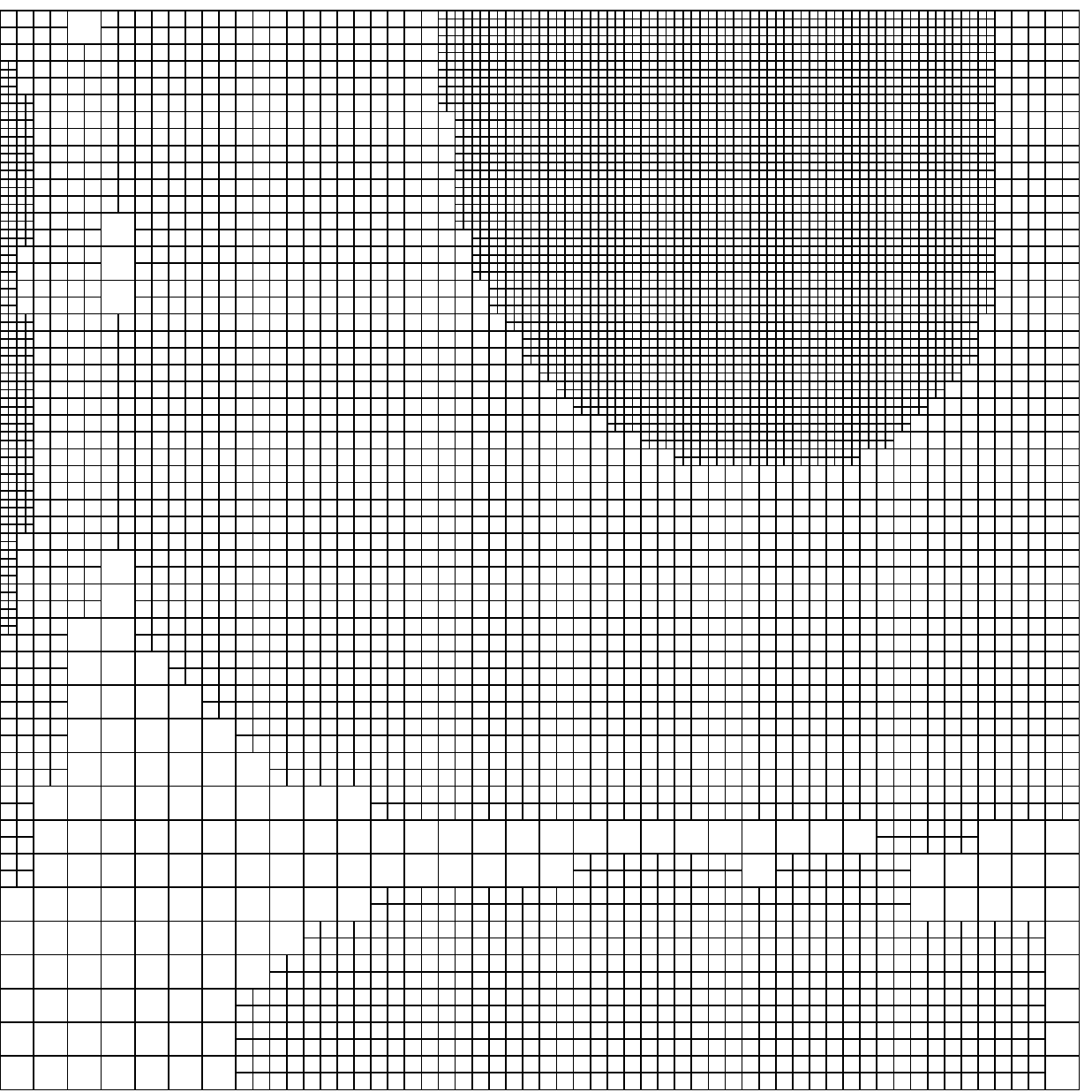}
\includegraphics[width=.3\textwidth]{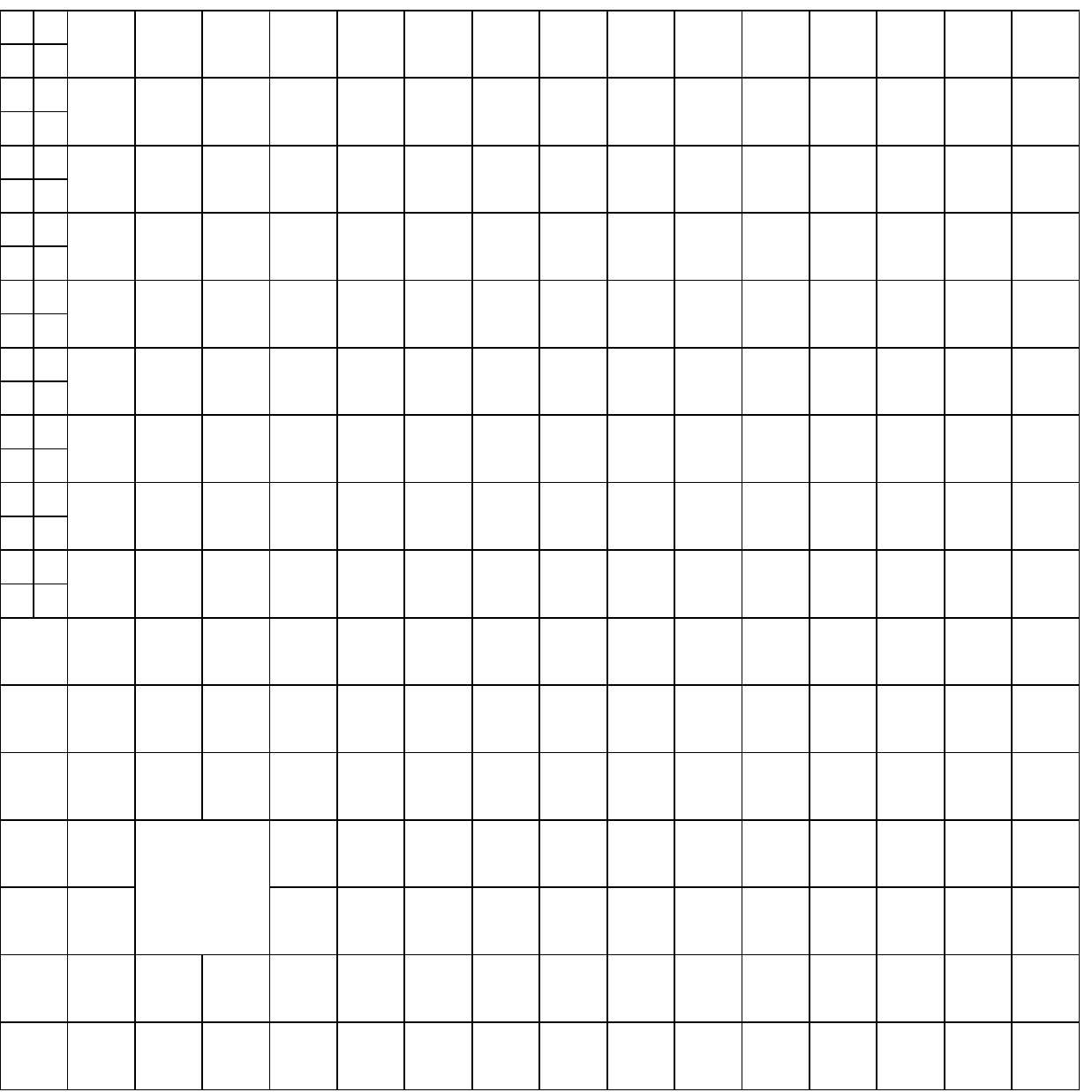} 
\includegraphics[width=.3\textwidth]{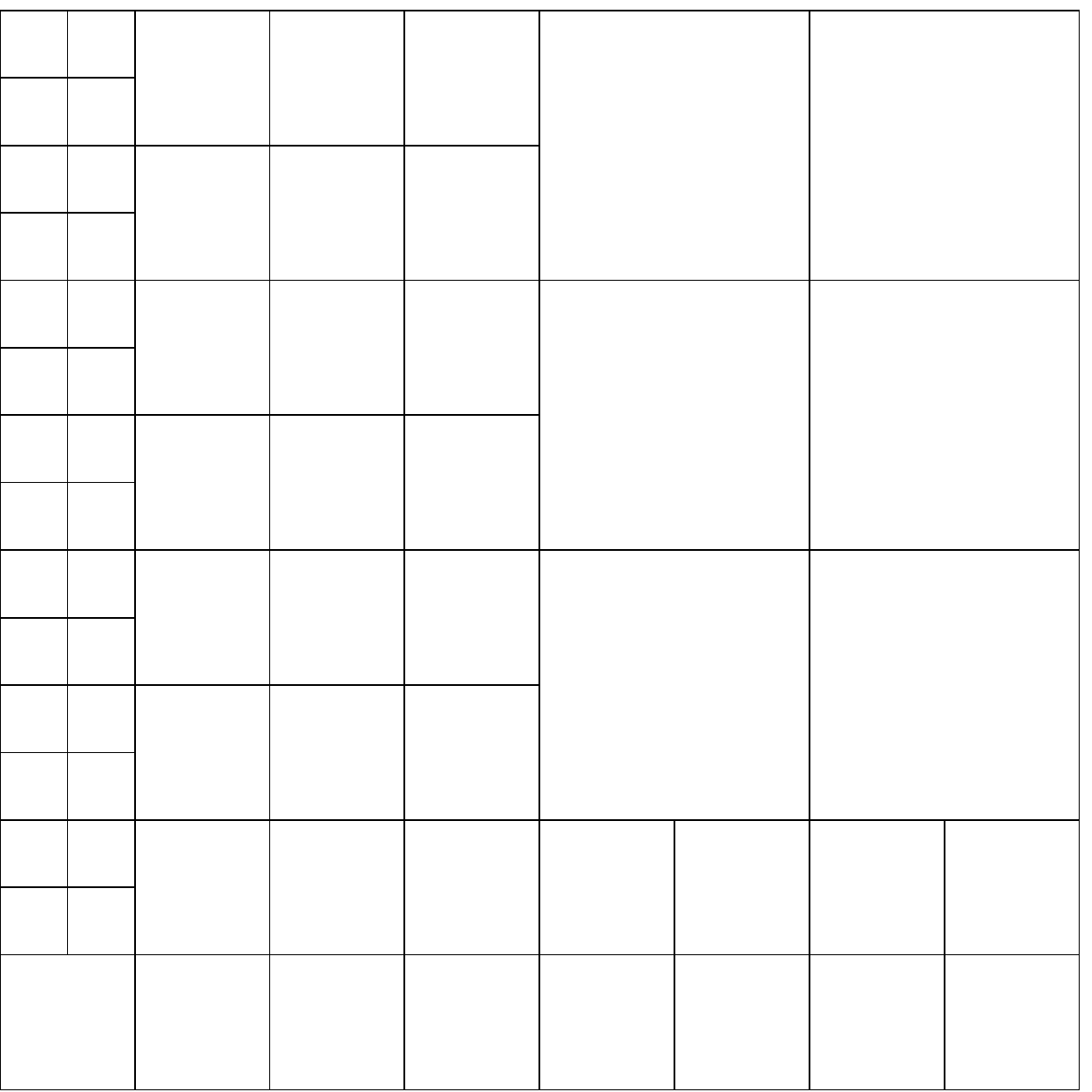}
\medskip

\includegraphics[width=.3\textwidth]{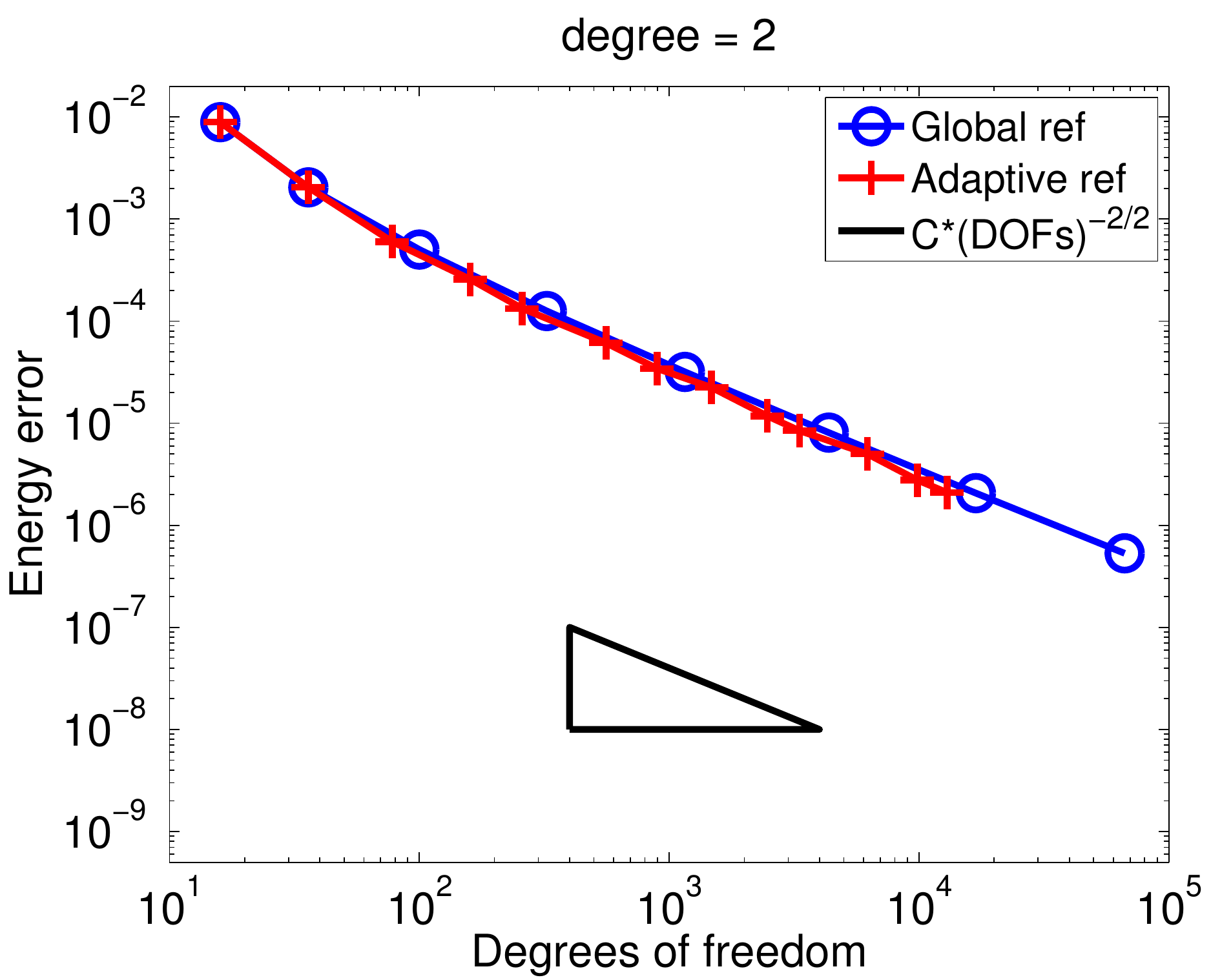}
\includegraphics[width=.3\textwidth]{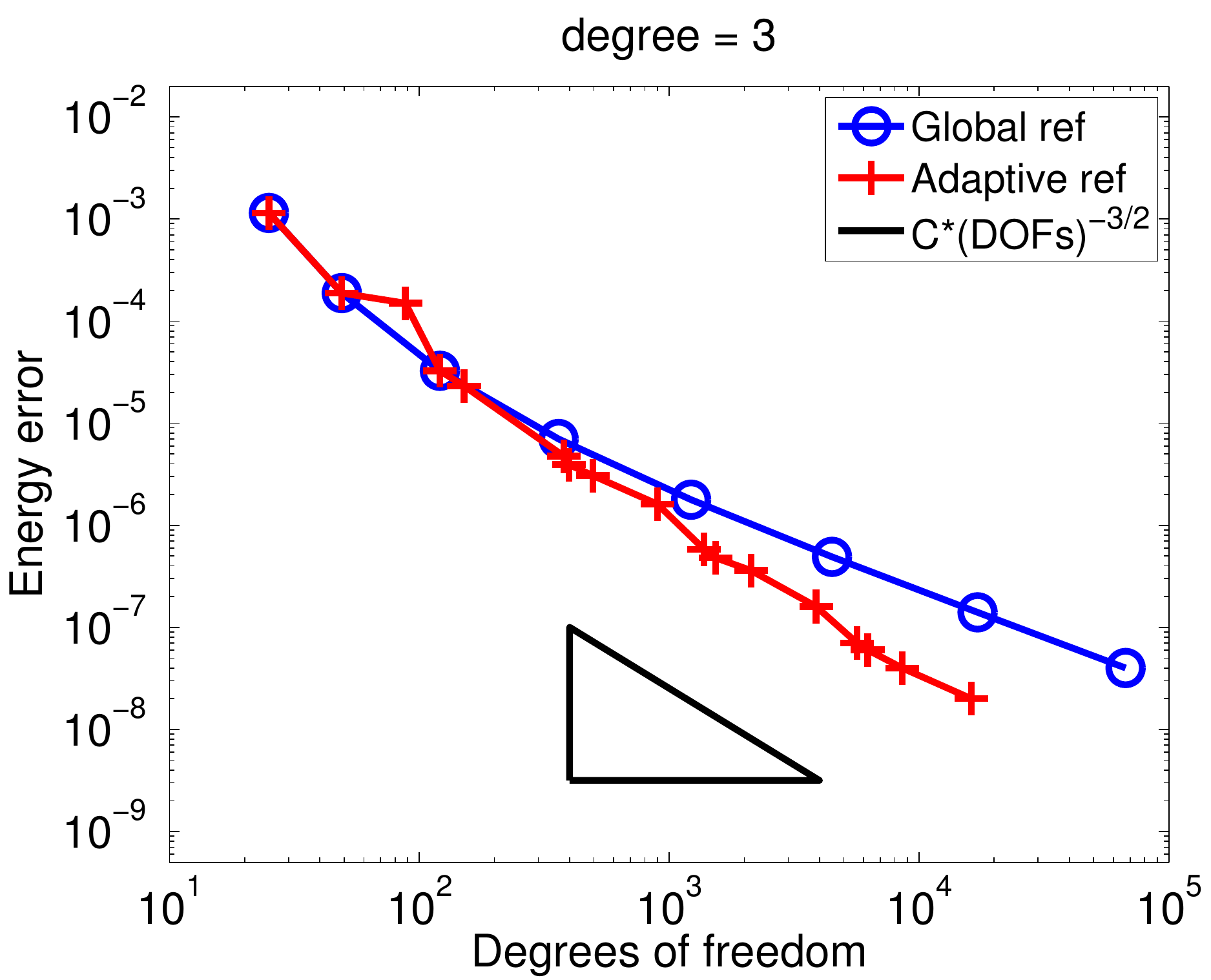}
\includegraphics[width=.3\textwidth]{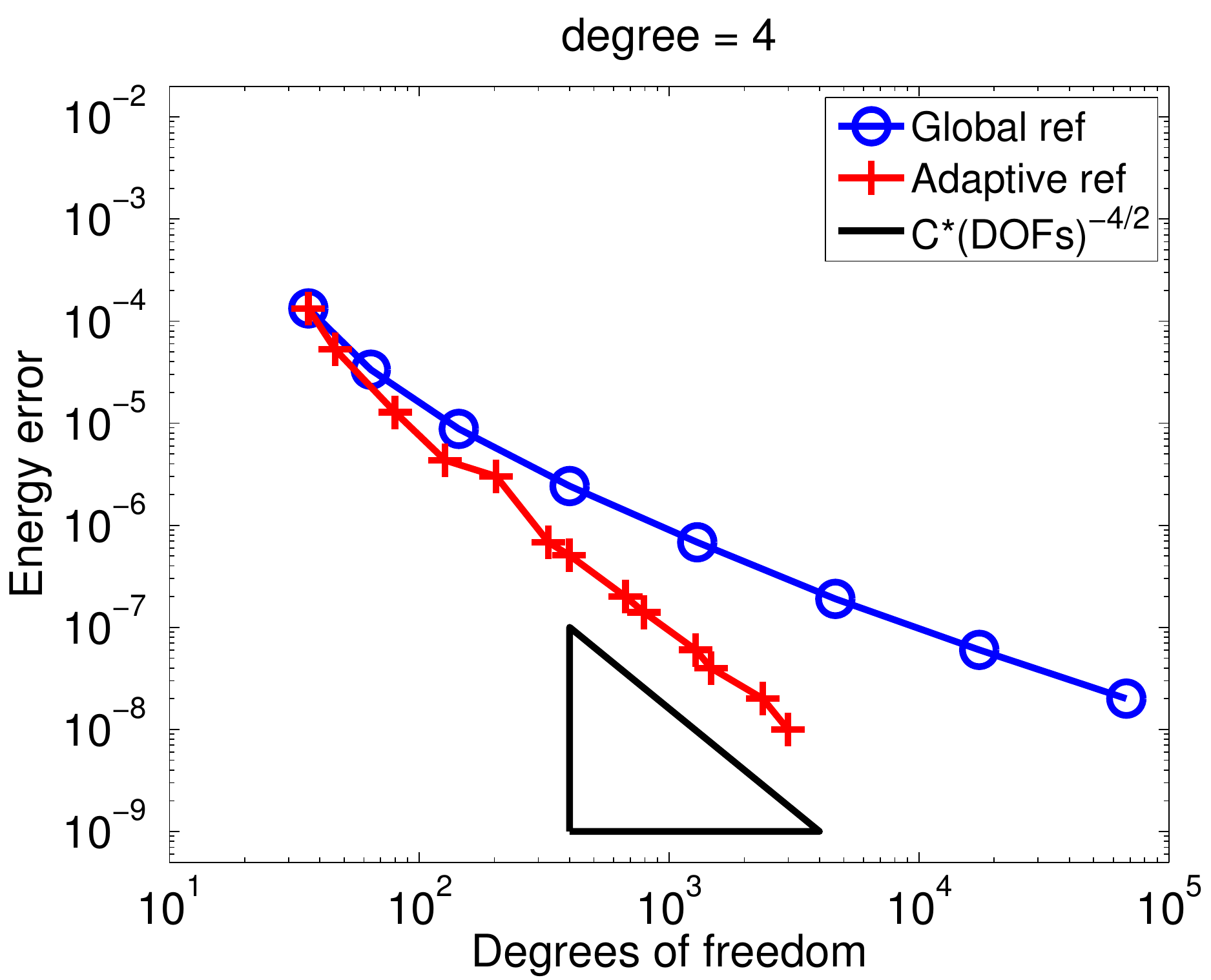}
\end{center}
\caption{\label{F:example singular solution meshes} \small Comparison of adaptive meshes for the solution of Example~\ref{Ex:singular solution} for different polynomial degrees, obtaining in all cases, $|u-U|_{H^1(\Omega)}\approx 5.10^{-6}$. The mesh for biquadratics has $6139$ elements and $6213$ DOFs (top left), the mesh for bicubics has $280$ elements and $379$ DOFs (top middle) and the mesh for biquartics has $67$ elements and $127$ DOFs (top right). On the other hand, we plot the energy error $|u-U|_{H^1(\Omega)}$ vs. degrees of freedom for different polynomials degrees.}
\end{figure}
\end{example}


\begin{example}[A physical domain: a quarter of ring]\label{Ex:ring}
 In this case, we consider the domain $\Omega$ given in polar coordinates by $\Omega=\{(\rho,\varphi)\mid 1\le \rho\le 2\quad\wedge\quad 0\le\varphi\le \frac{\pi}{2}\}$ and we choose the problem data $f$ and $g$ in~\eqref{E:poisson} such that the exact solution $u$ is given by $u(x,y) = e^{-100((x-\frac12)^2+(y-\frac12)^2)}$. Despite optimal rates of convergence are reached using both tensor product meshes and hierarchical meshes (see Figure~\ref{F:example ring}), we emphasize that in this case the adaptive strategy is still convenient. As an example, we notice that to get an energy error of $2.10^{-8}$ using bicubics, the adaptive strategy requieres less than the $3\%$ of the degrees of freedom utilised by the global refinement, because the former procedure requires $1900$ DOFs whereas the latter needs $67081$ DOFs.

\begin{figure}[H!tbp]
\begin{center}
\includegraphics[width=.3\textwidth]{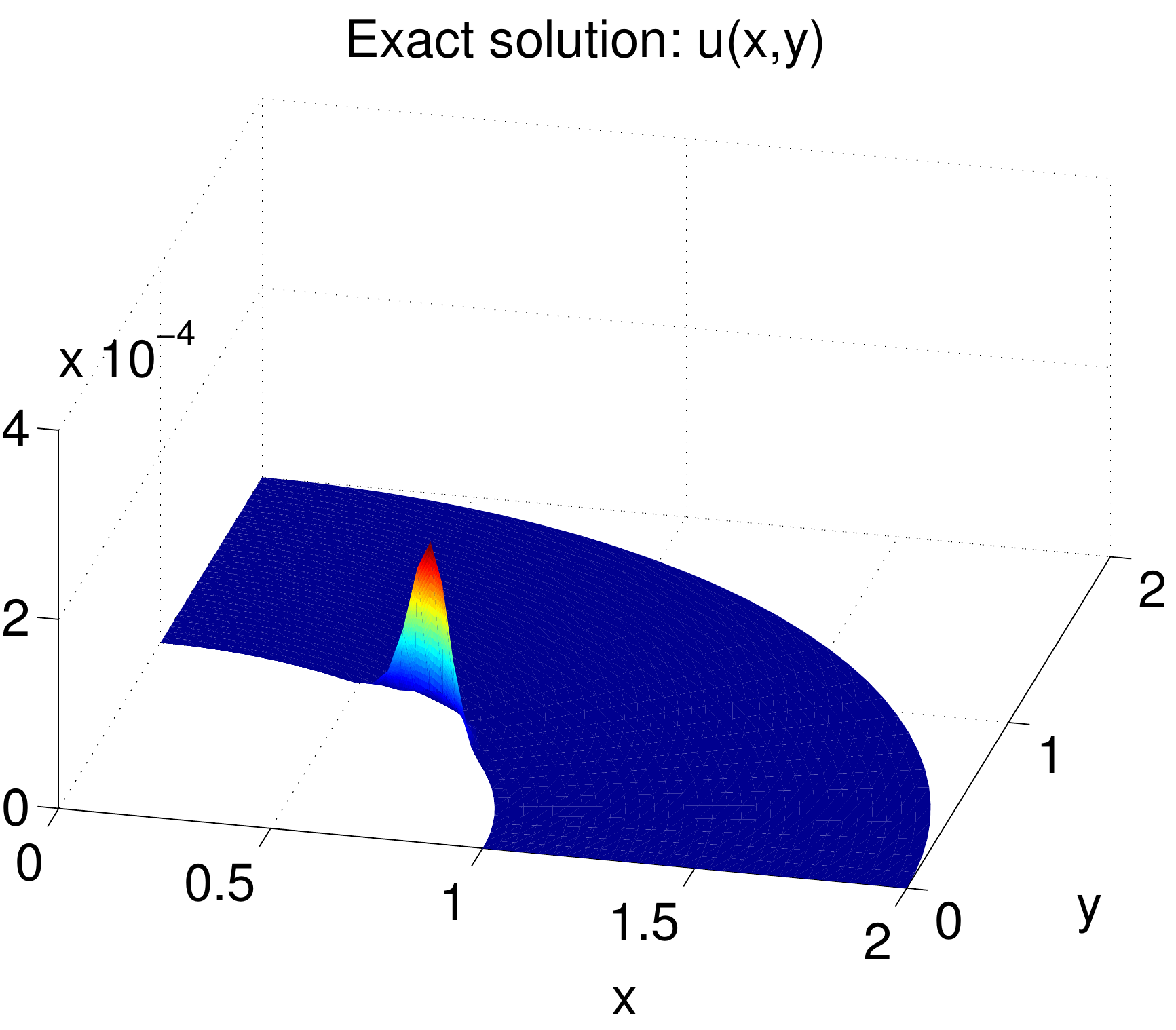}
\includegraphics[width=.2\textwidth]{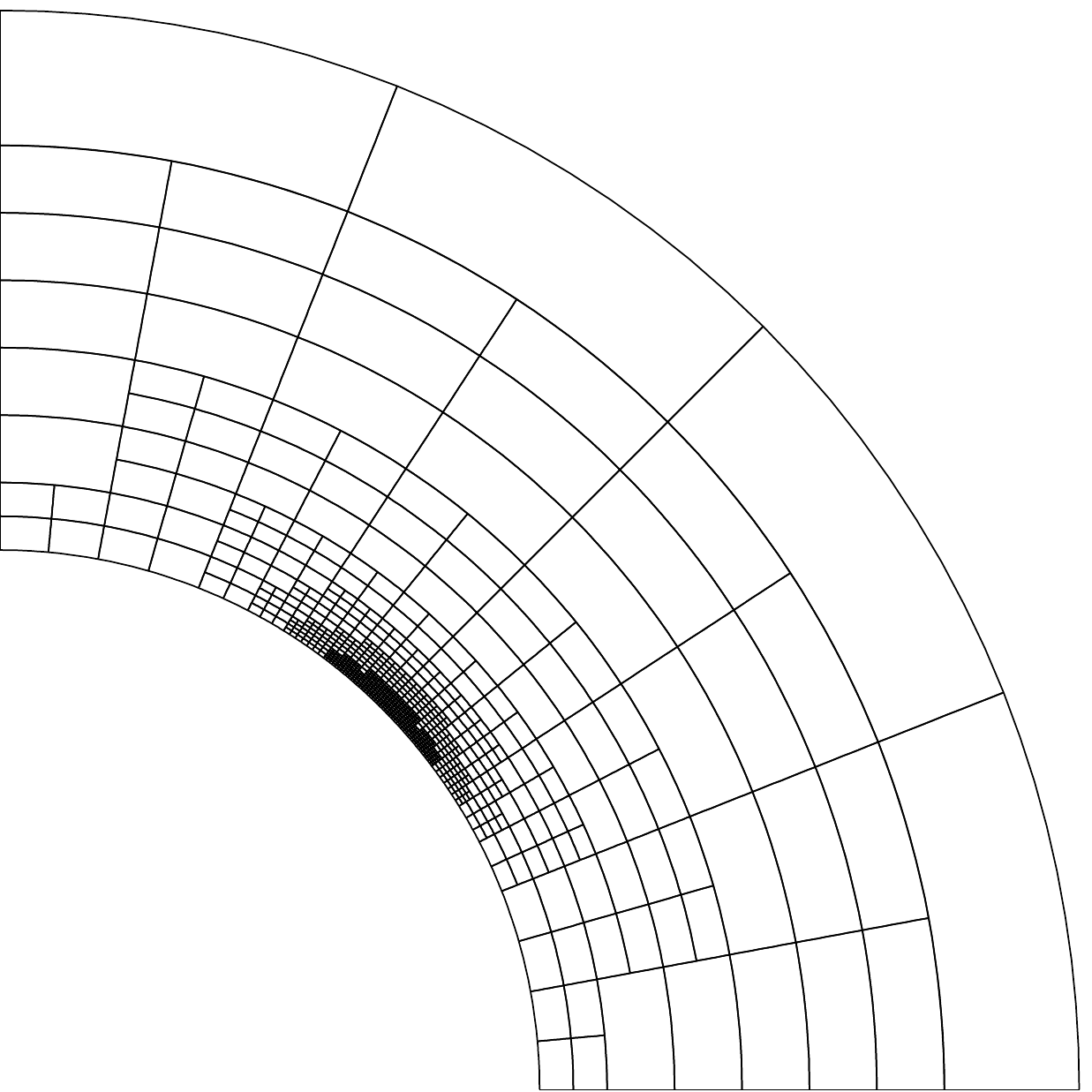}
\includegraphics[width=.2\textwidth]{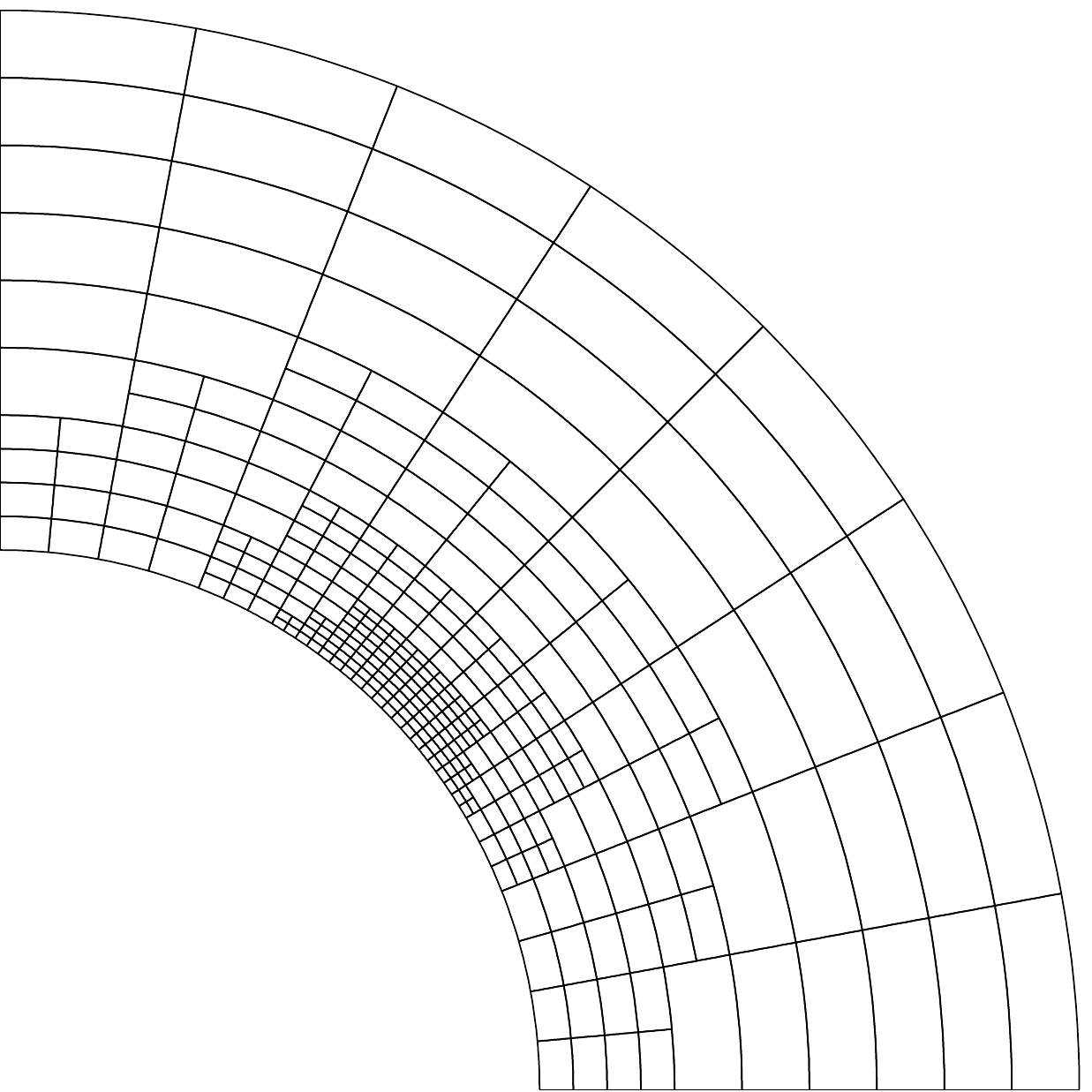} 
\includegraphics[width=.2\textwidth]{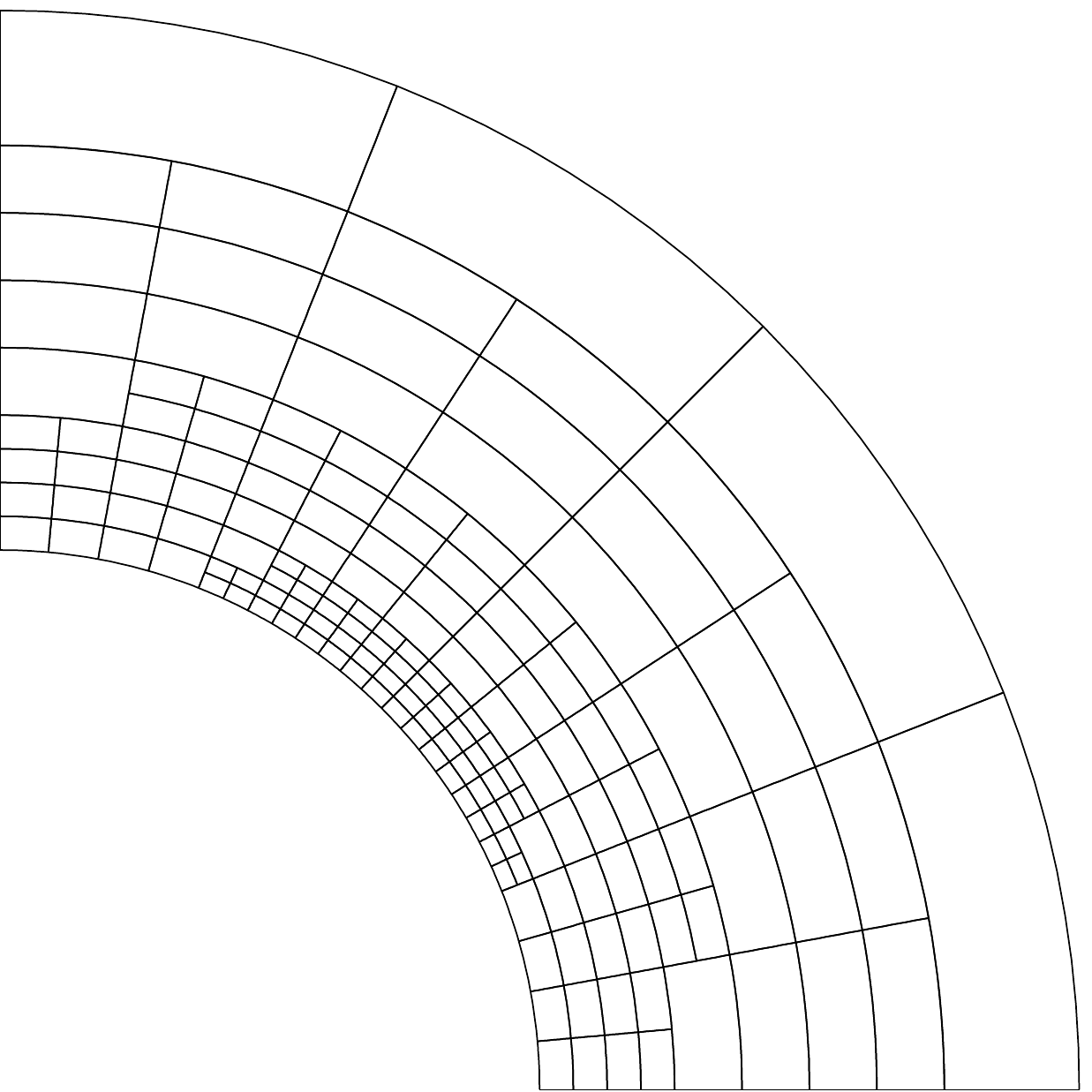}
\medskip 

\includegraphics[width=.3\textwidth]{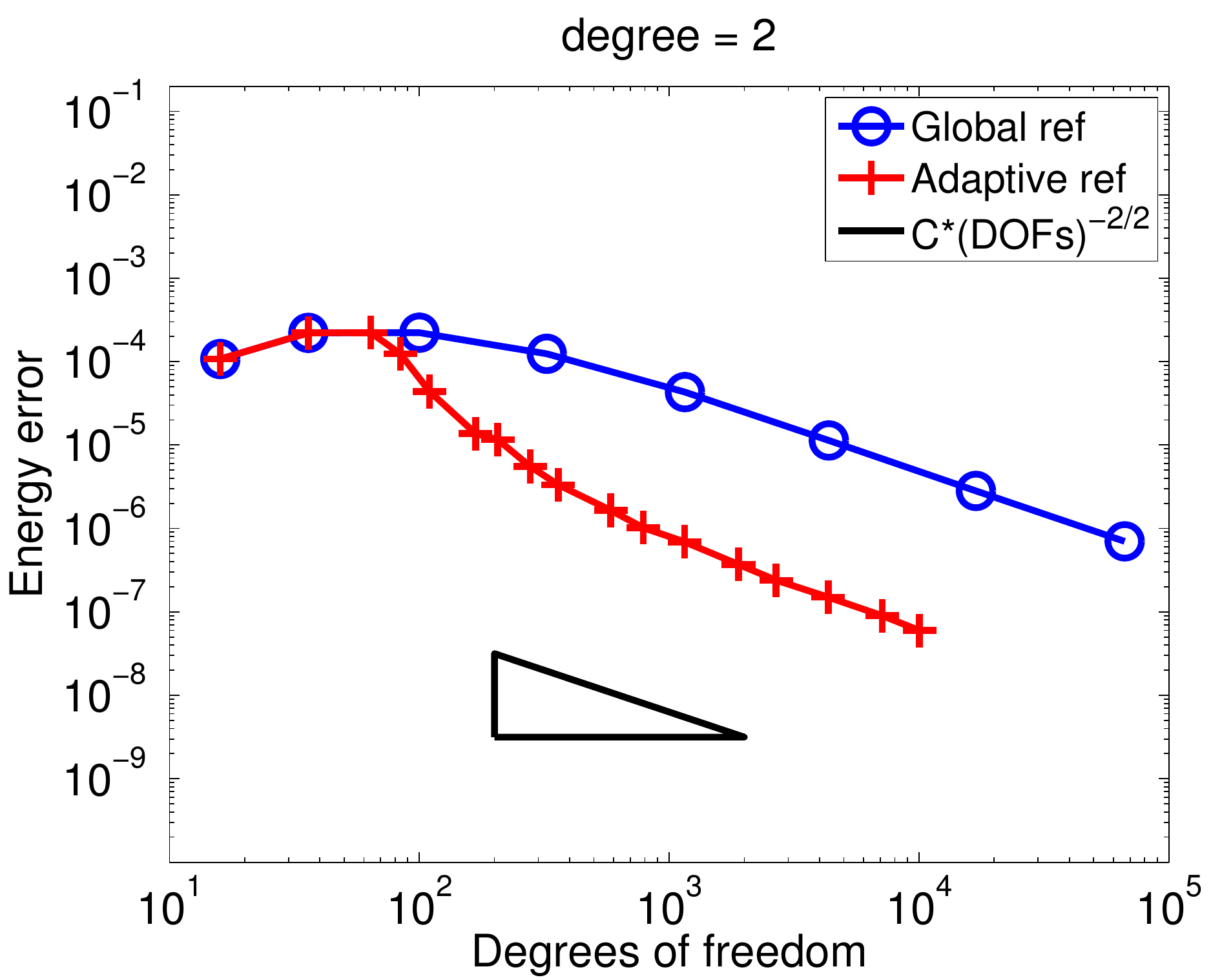}
\includegraphics[width=.3\textwidth]{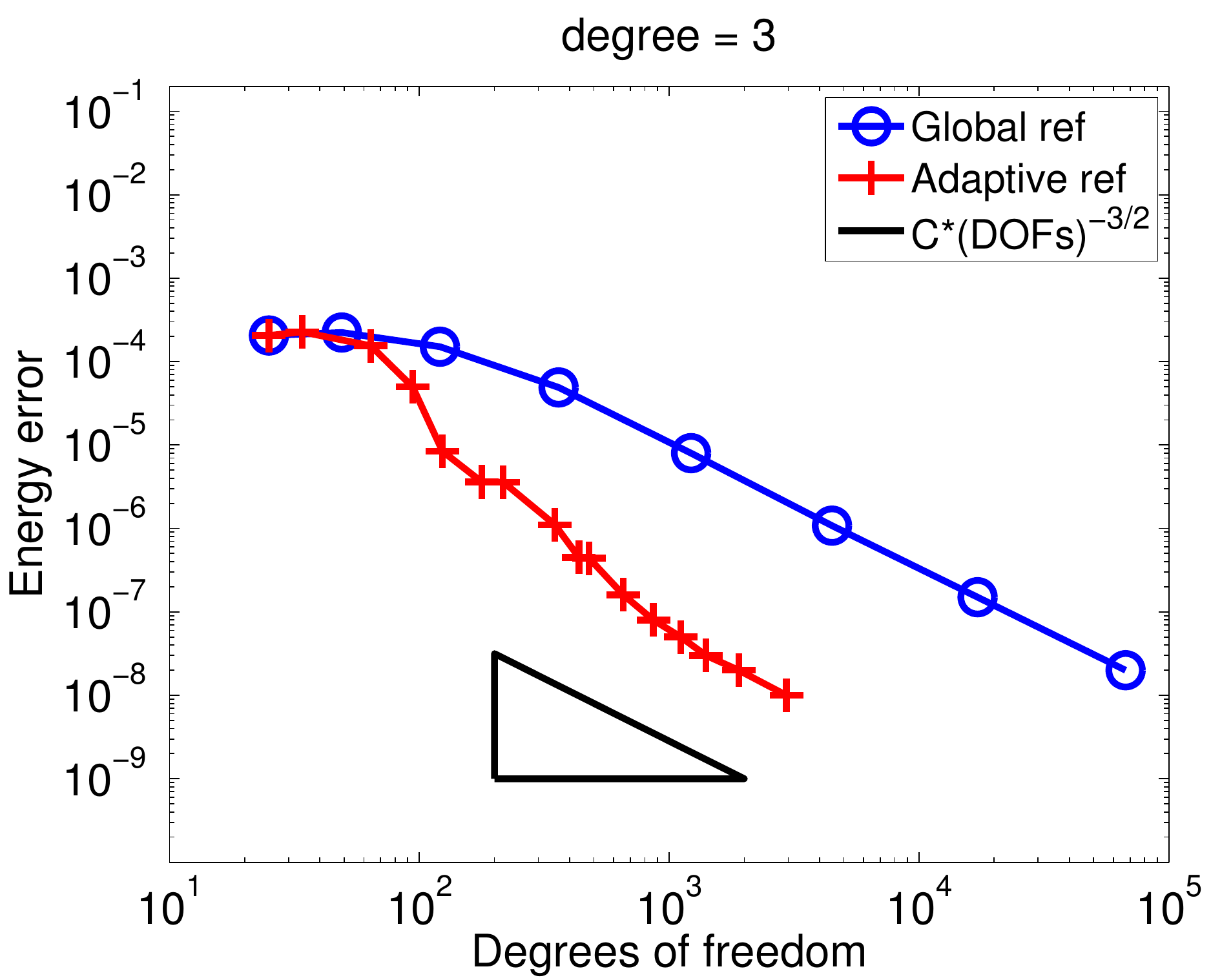}
\includegraphics[width=.3\textwidth]{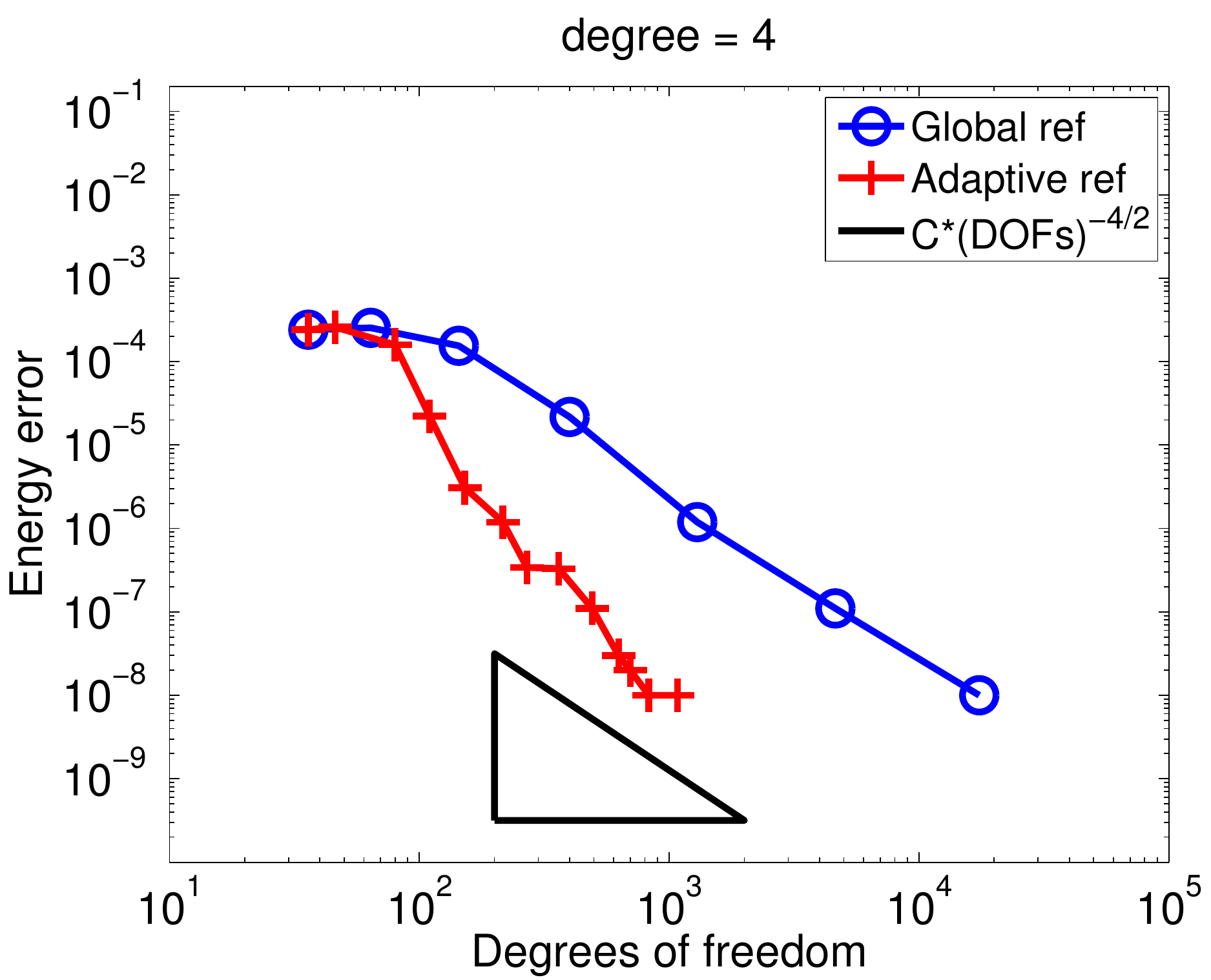}
\end{center}
\caption{\label{F:example ring} \small Some hierarchical meshes for the solution of Example~\ref{Ex:ring} obtaining $|u-U|_{H^1(\Omega)}\approx 1.10^{-6}$ in all cases; for biquadratics with $802$ elements and $788$ DOFs (top left), bicubics with $316$ elements and $349$ DOFs (top middle) and biquartics with $160$ elements and $216$ DOFs (top right). We plot the energy error $|u-U|_{H^1(\Omega)}$ vs. degrees of freedom; for biquadratics (bottom left), bicubics (bottom middle) and biquartics (bottom right).} 
\end{figure}
\end{example}


\begin{example}[A $3d$-domain: the unit cube]\label{Ex:cube}
We consider the cube $\Omega=[0,1]^3$ and choose $f$ and $g$ such that the exact solution $u$ of~\eqref{E:poisson} is given by $u(x,y,z) = e^{-100((x-\frac12)^2+(y-\frac12)^2+(z-\frac12)^2)}$. Since the solution is smooth enough, both strategies reach optimal orders of convergence, as showed in Figure~\ref{F:example cube errors}. However, we notice that in all cases, the curves corresponding to the adaptive strategy are meaningfully by below, which in practice is equivalent to achieve a given accuracy with considerably fewer degrees of freddom.

\begin{figure}[H!tbp]
\begin{center}
\includegraphics[width=.3\textwidth]{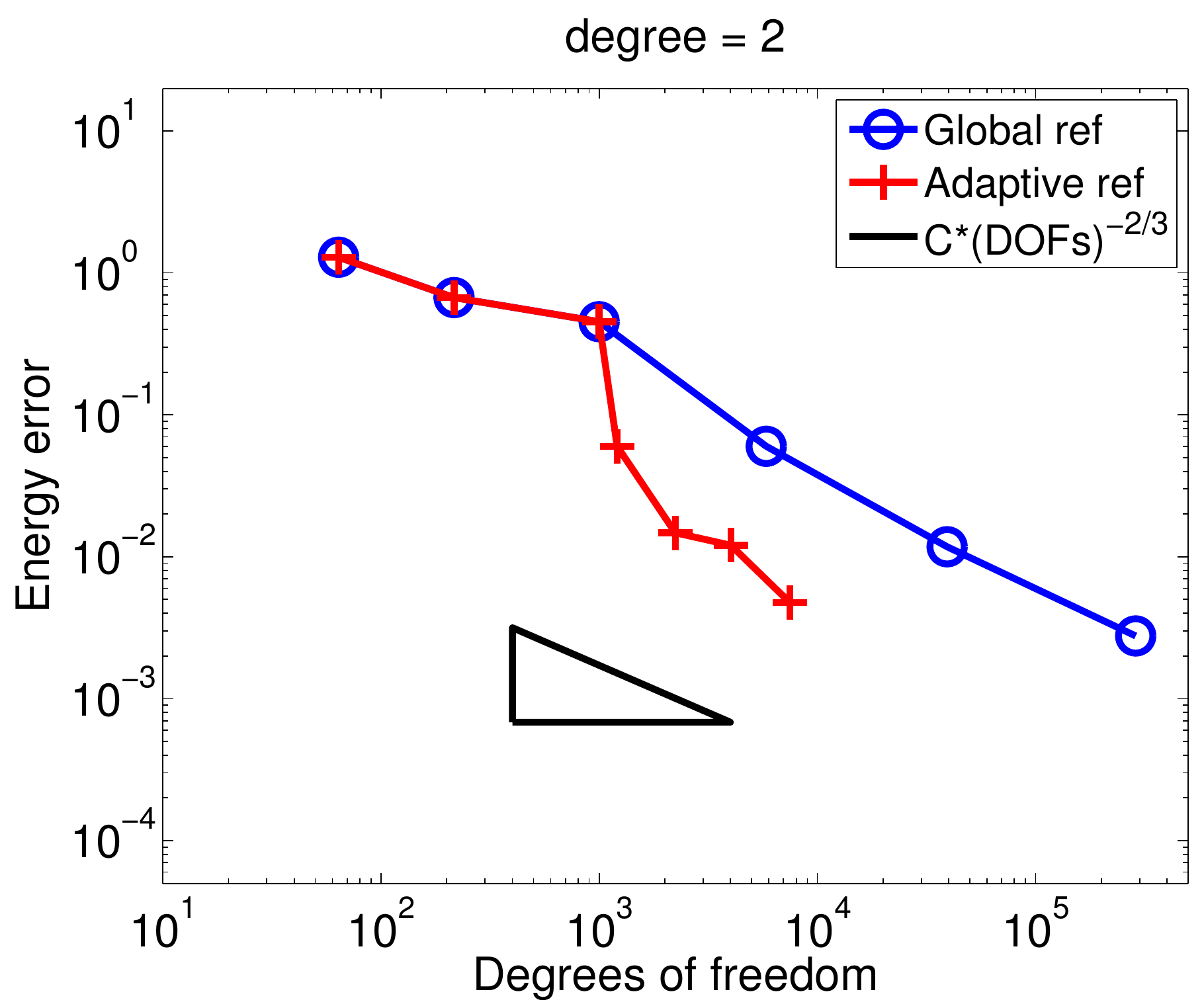}
\includegraphics[width=.3\textwidth]{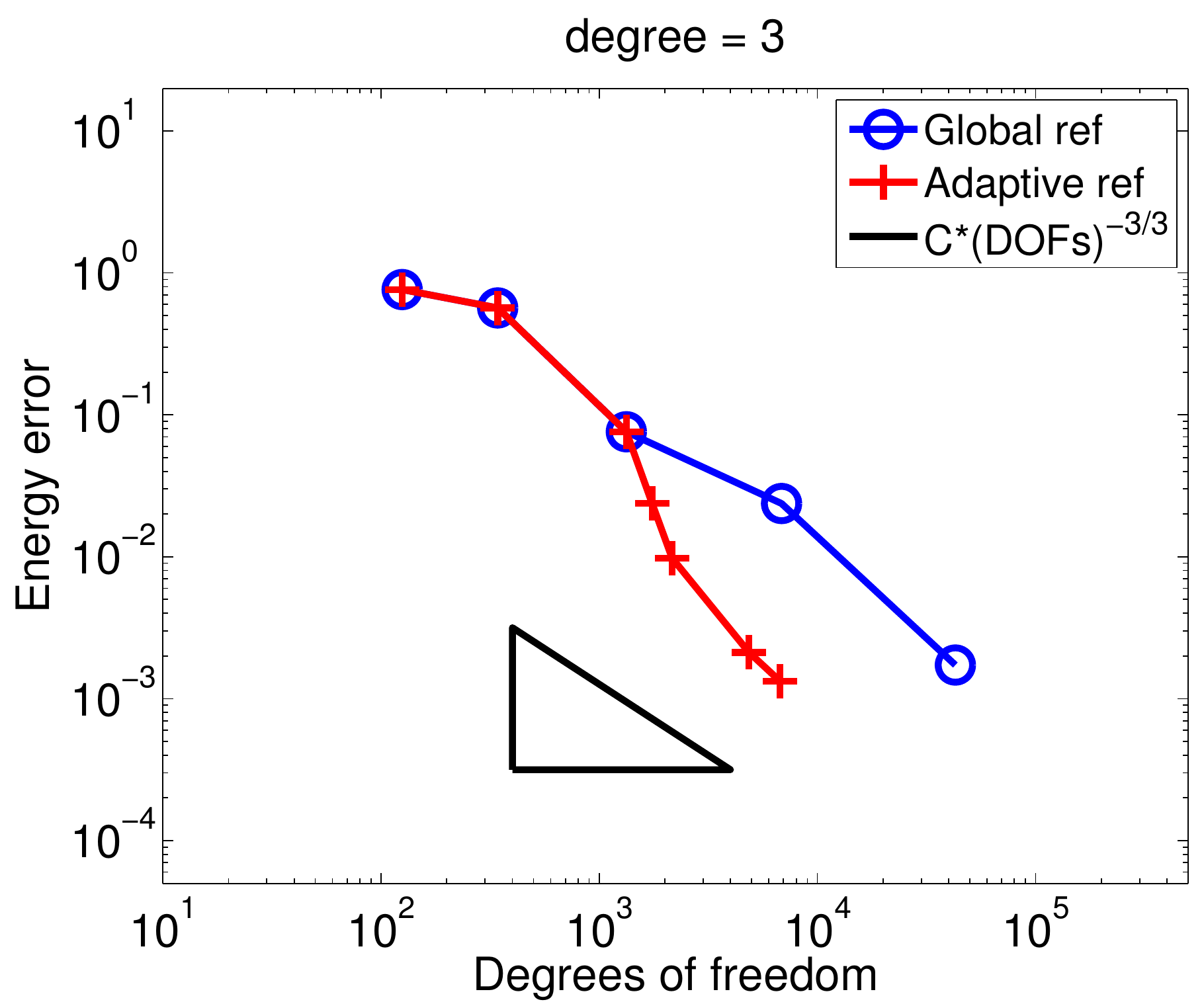}
\includegraphics[width=.3\textwidth]{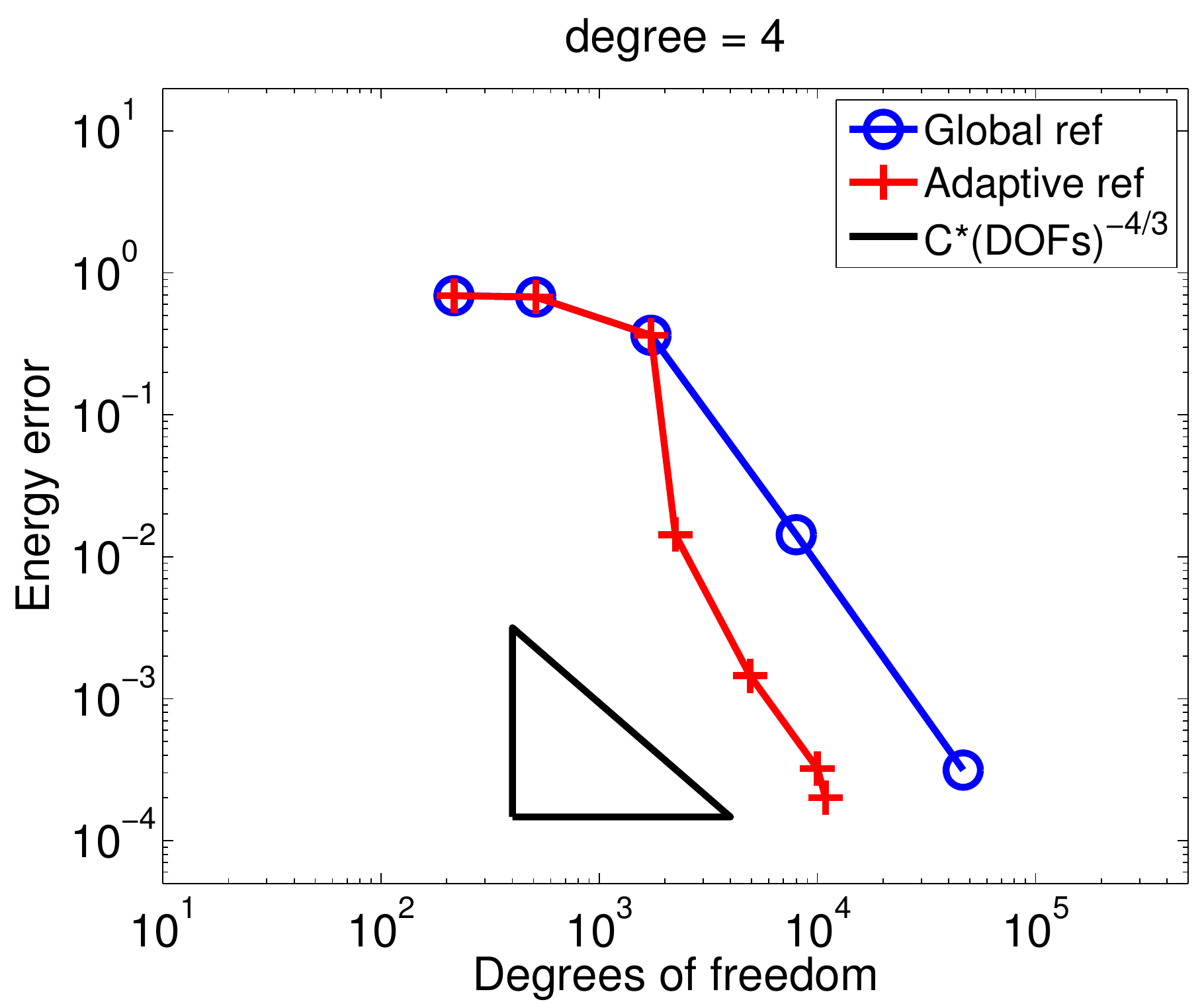}
\end{center}
\caption{\label{F:example cube errors} \small Energy error decay in terms of degrees of freedom for the solution of the Example~\ref{Ex:cube}; using biquadratics (left), bicubics (middle) and biquartics (right).}
\end{figure}
\end{example}

\paragraph{On the efficiency of the error estimators}
Finally, we analyse the behaviour of the efficiency index $\frac{\left(\sum_{\beta\in\HH}\EE_\beta^2\right)^\frac12}{\|\nabla(u-U)\|_{L^2(\Omega)}}$. In Figure~\ref{F:efficiency indices}, we plot this index at each iteration step for all the examples previously presented. We see that the energy error $\|\nabla(u-U)\|_{L^2(\Omega)}$ and the global a posteriori error estimator $\left(\sum_{\beta\in\HH}\EE_\beta^2\right)^\frac12$ are equivalent quantities, that is, there exists constants $c_1$ and $c_2$ such that
$$0<c_1\le\frac{\left(\sum_{\beta\in\HH}\EE_\beta^2\right)^\frac12}{\|\nabla(u-U)\|_{L^2(\Omega)}}\le c_2,$$
at each iteration step. Thus, we conclude that our estimators are not only reliable but also experimentally efficient. 

\begin{figure}[H!tbp]
\begin{center}
 \includegraphics[width=.3\textwidth]{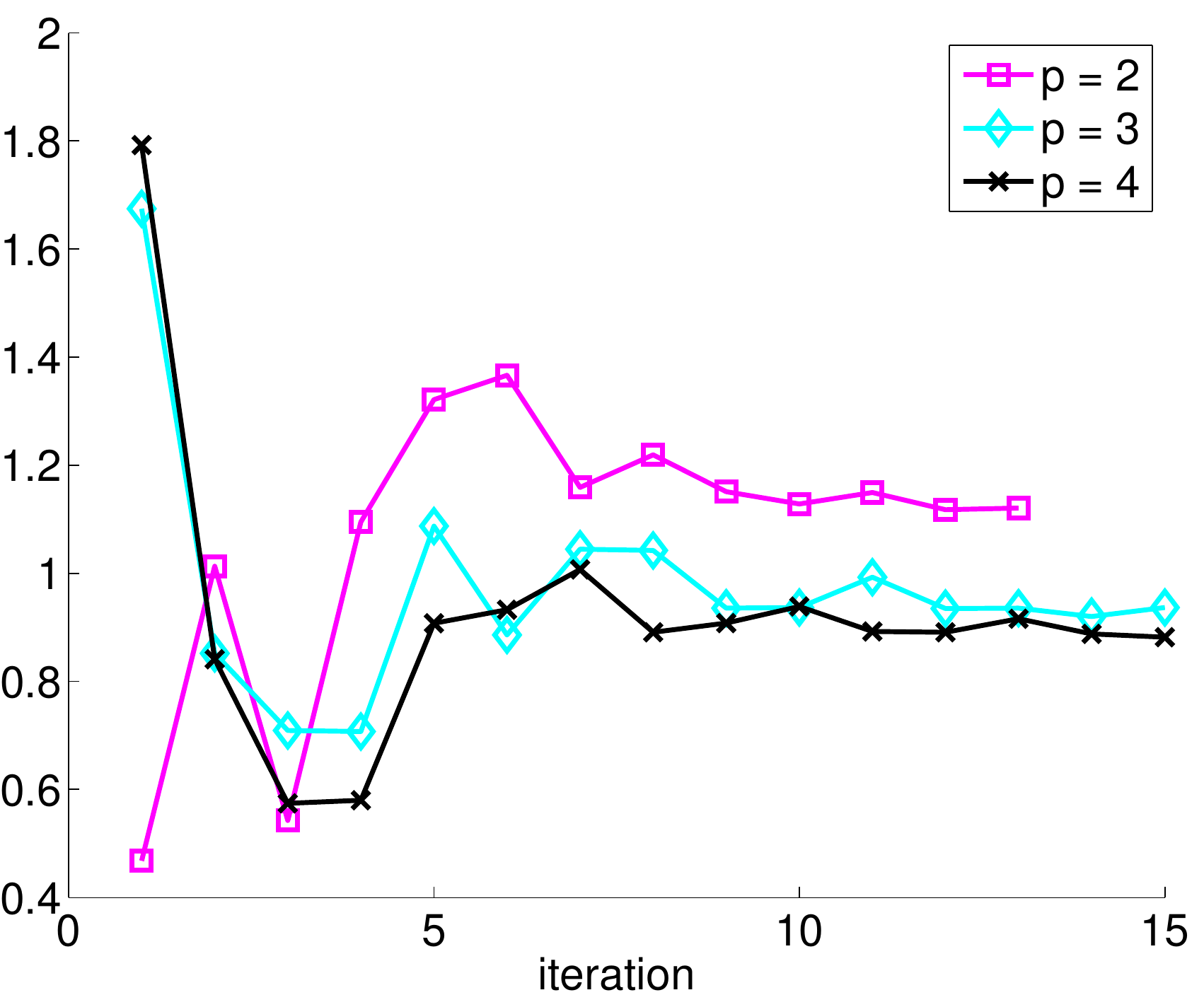}
\includegraphics[width=.3\textwidth]{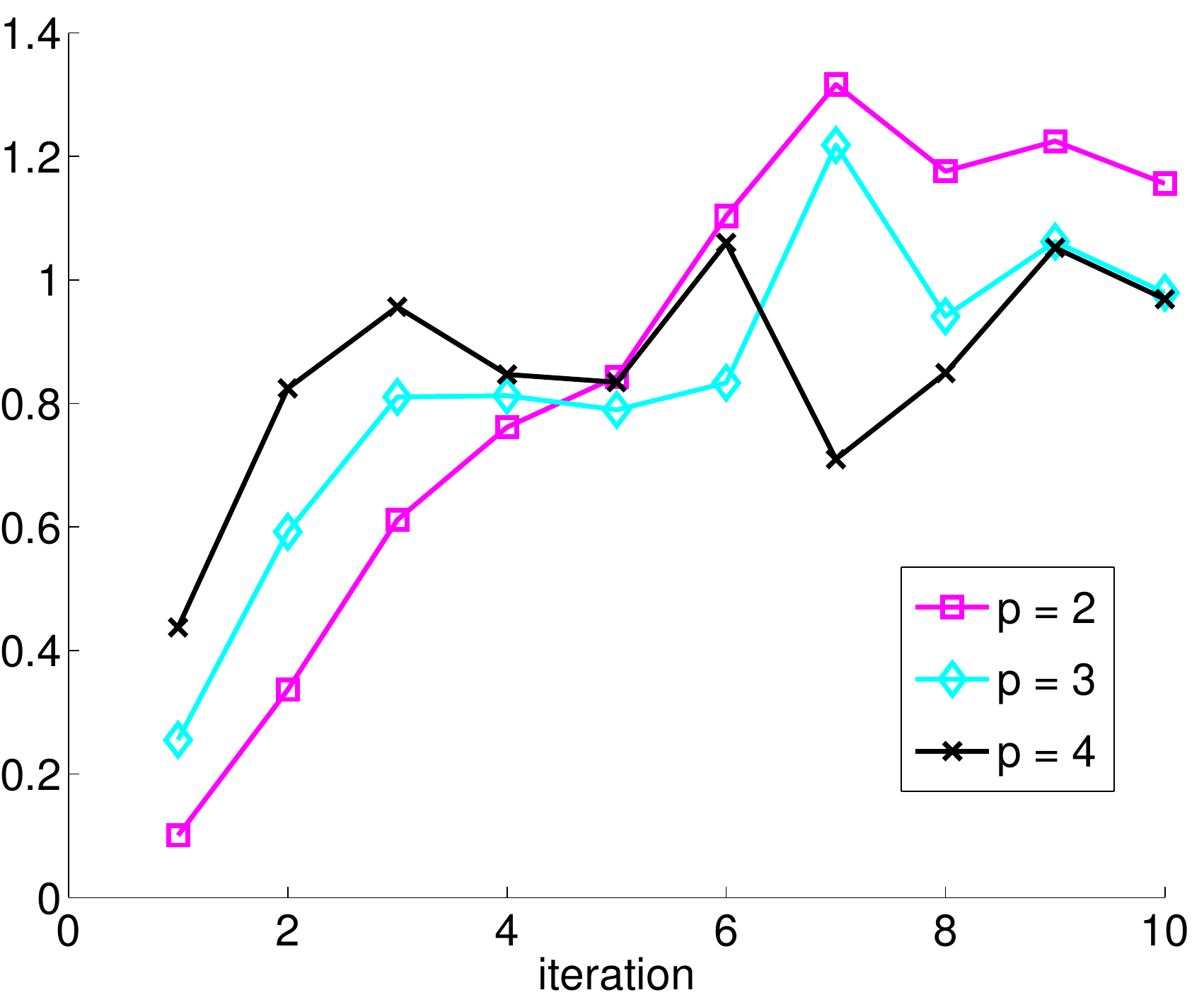}
 \includegraphics[width=.3\textwidth]{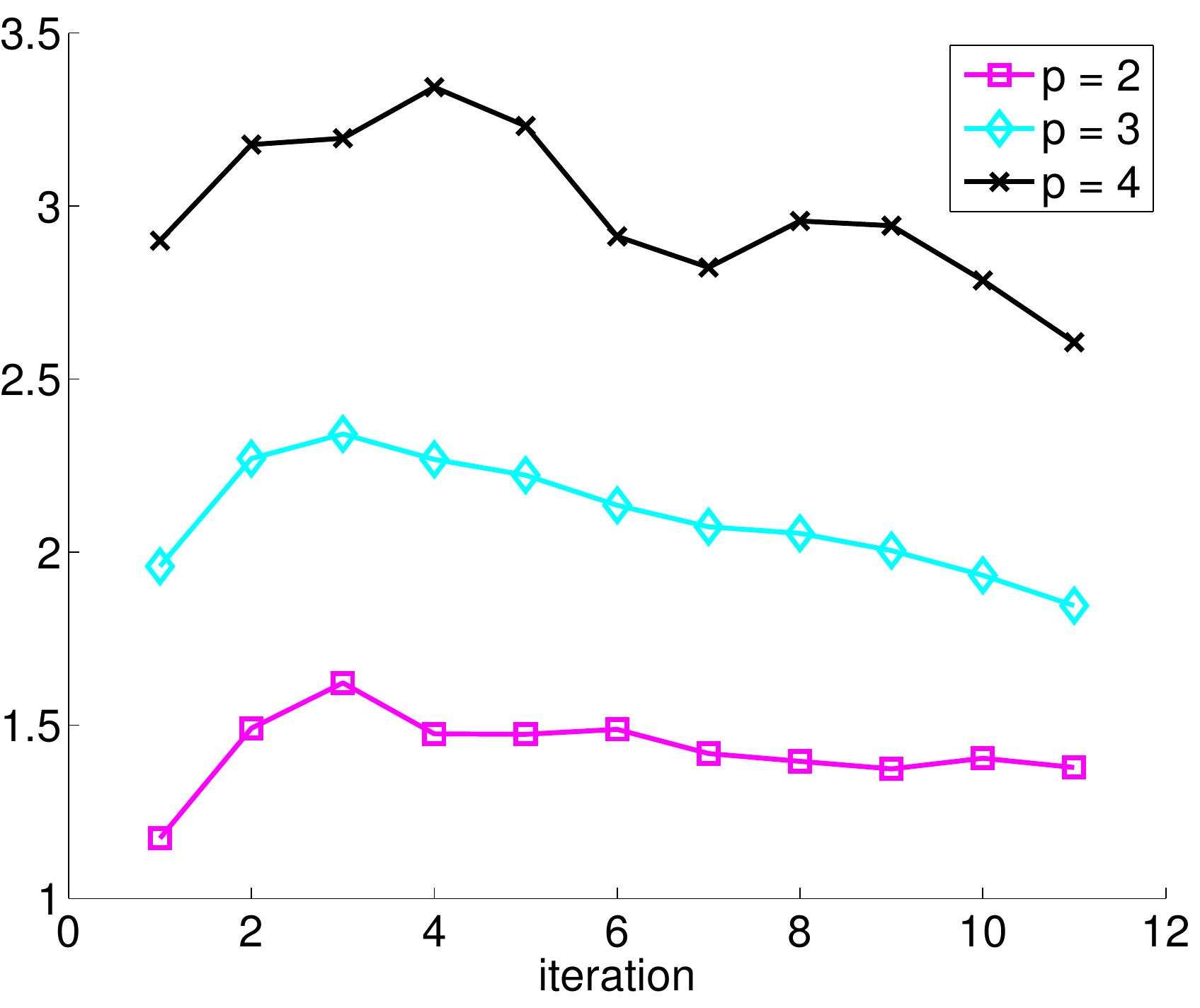}
\includegraphics[width=.3\textwidth]{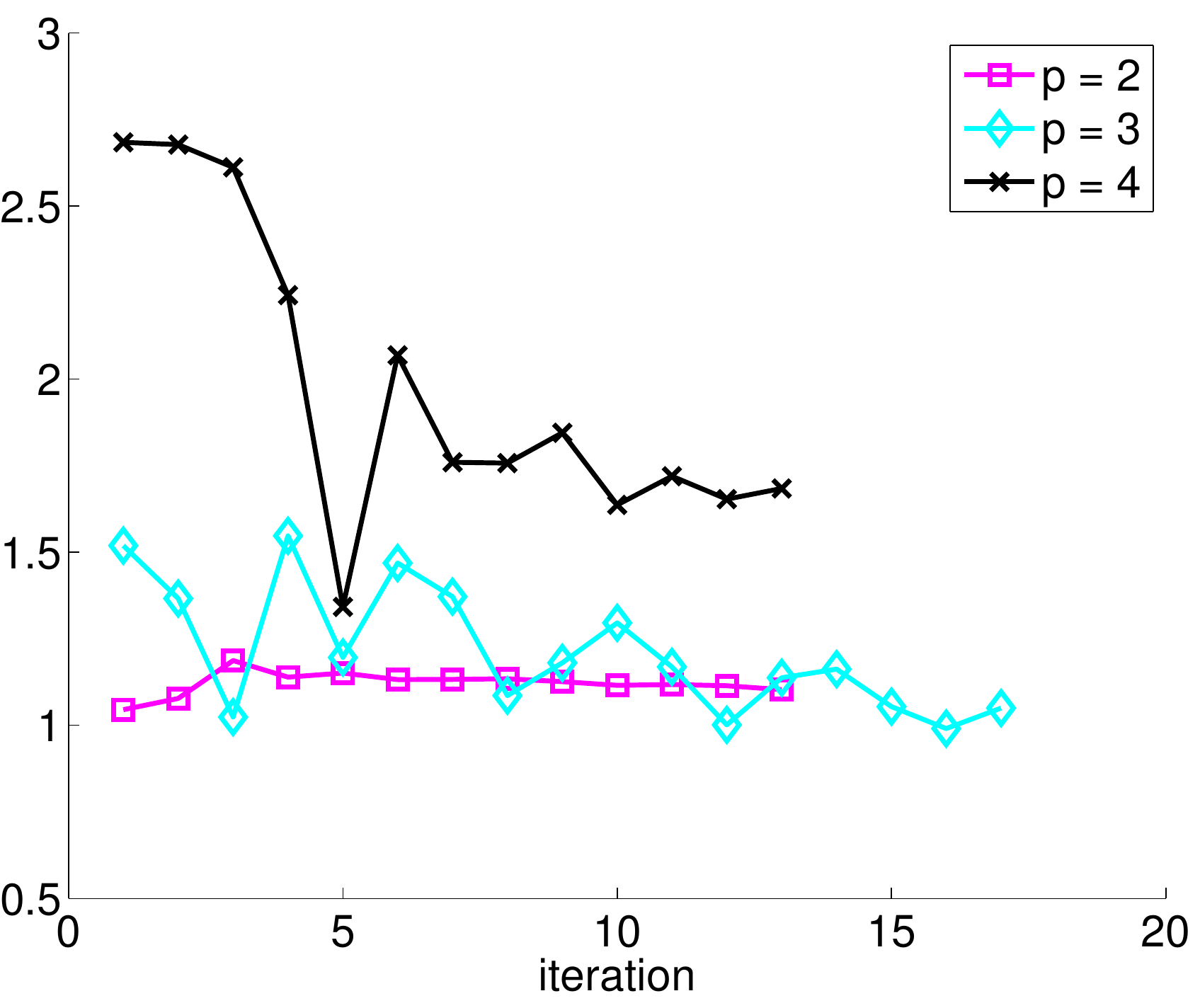}
\includegraphics[width=.3\textwidth]{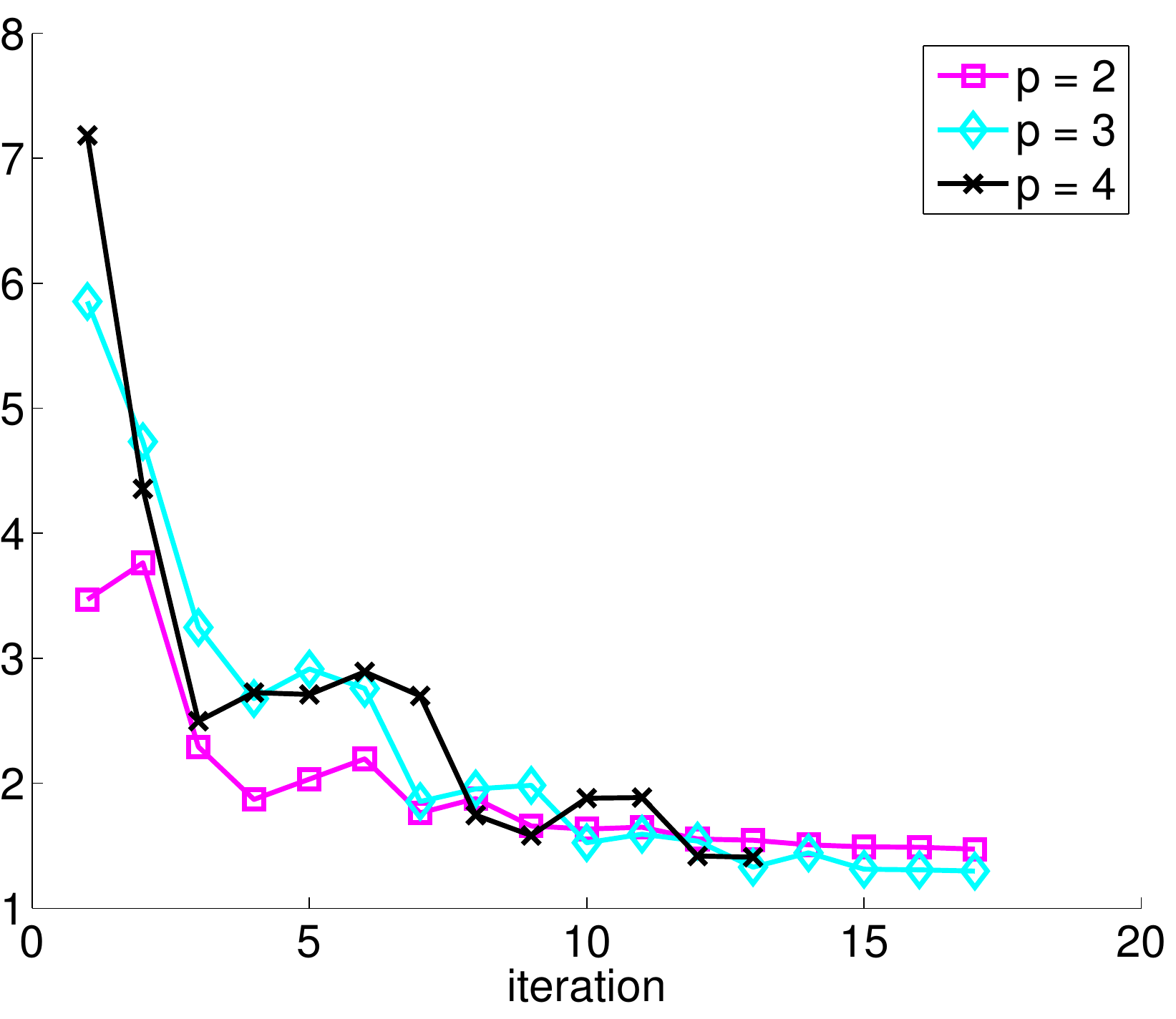} 
\includegraphics[width=.3\textwidth]{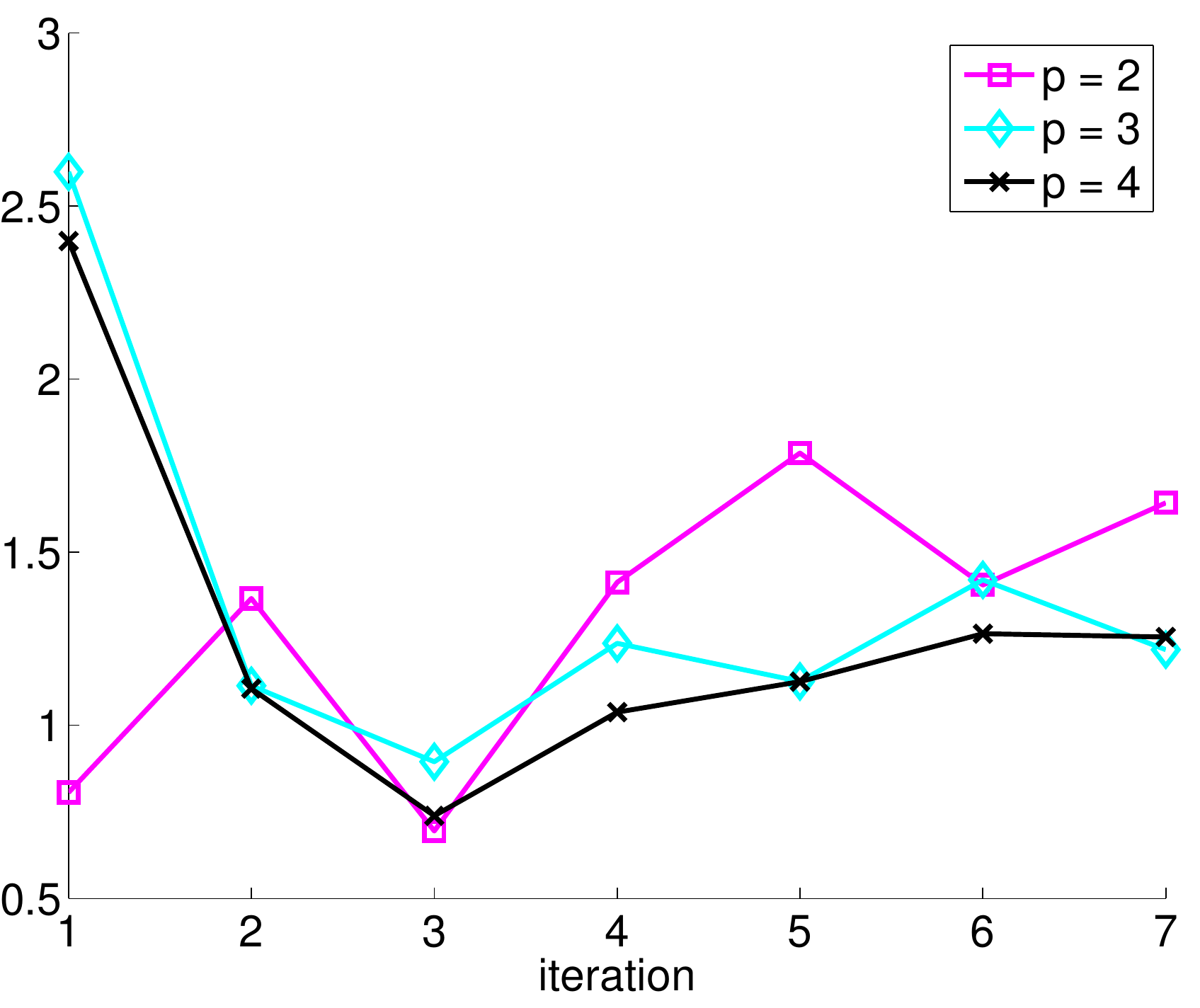}
\end{center}
\caption{\label{F:efficiency indices} Efficiency indices $\frac{\frac{1}{10}\left(\sum_{\beta\in\HH}\EE_\beta^2\right)^\frac12}{\|\nabla(u-U)\|_{L^2(\Omega)}}$; for Example~\ref{Ex:regular solution} (top left), Example~\ref{Ex:diagonal} (top middle), Example~\ref{Ex:Lshaped} (top right), Example~\ref{Ex:singular solution} (bottom left), Example~\ref{Ex:ring} (bottom middle), Example~\ref{Ex:cube} (bottom right).}
\end{figure} 

\section*{Acknowledgements}

{\small A. Buffa was partially supported by ERC AdG project CHANGE n. 694515, by MIUR PRIN project ``Metodologie innovative nella modellistica differenziale numerica'', and by Istituto Nazionale di Alta Matematica (INdAM). E.M. Garau was partially supported by CONICET through grant PIP 112-2011-0100742, by
Universidad Nacional del Litoral through grants CAI+D 500 201101 00029 LI, 501 201101 00476 LI, by Agencia Nacional
de Promoci\'on Cient\'ifica y Tecnol\'ogica, through grants
PICT-2012-2590 and PICT-2014-2522 (Argentina). This support is gratefully acknowledged.
}

\bibliographystyle{apalike}

\end{document}